\newtheorem{theorem}{Theorem}[section]
\newtheorem{lemma}[theorem]{Lemma}
\theoremstyle{definition}
\newtheorem{definition}[theorem]{Definition}
\newtheorem{example}[theorem]{Example}
\theoremstyle{remark}
\newtheorem{remark}[theorem]{Remark}
\numberwithin{equation}{section}
\newtheorem{conjecture}[theorem]{Conjecture}
\newtheorem{proposition}[theorem]{Proposition}
\newcounter{casenum}
\DeclareMathOperator{\Irr}{Irr}
\DeclareMathOperator{\Res}{Res}
\DeclareMathOperator{\ch}{\text{supp}} 
\DeclareMathOperator{\Char}{Char}
\DeclareMathOperator{\seq}{Seq}                                                                
\newcommand{\phcyc}[2]{\ensuremath{\p^{\Lambda_{#1}}_{#2}}}              
\newcommand{\ep}[1]{\varepsilon_{#1}}                                                       
\newcommand{\epch}[1]{\varepsilon^\vee_{#1}}                                           
\newcommand{\w}{\widetilde}                                                                       
\newcommand{\Li}{L}                                                                                    
\newcommand{\Ti}{T}                                                                                    
\newcommand{\T}[1]{\Ti \ensuremath{(#1)}}                                                 
\newcommand{\Tii}[2]{\ensuremath{\Ti_{#1;#2}}}                                          
\newcommand{\Sign}{S}                                                                               
\newcommand{\Si}[1]{\Sign \ensuremath{(#1)}}                                            
\newcommand{\Sii}[2]{\ensuremath{\Sign_{#1;#2}}}                                      
\DeclareMathOperator{\Repr}{Rep}                                                              
\newcommand{\rep}[1]{\Repr \ensuremath{^{#1}}}                                        
\newcommand{\Rcal}[1]{\mathcal{R}(#1)}                                                     
\newcommand{\Rcalj}[2]{\mathcal{R}_{#1}(#2)}                                            
\DeclareMathOperator{\pro}{pr}                                                                    
\newcommand{\pr}[1]{\pro \ensuremath{_{\Lambda_{#1}}}}                          
\DeclareMathOperator{\infltxt}{infl}                                                                
\newcommand{\infl}[1]{\infltxt \ensuremath{_{\Lambda_{#1}}}}                     
\newcommand{\e}[1]{\ensuremath{e_{#1}}}                                                   
\newcommand{\etil}[1]{\ensuremath{\w{e}_{#1}}}                                          
\newcommand{\ech}[1]{\ensuremath{e^\vee_{#1}}}                                      
\newcommand{\etilch}[1]{\ensuremath{\w{e}^\vee_{#1}}}                              
\newcommand{\ftil}[1]{\ensuremath{\w{f}_{#1}}}                                            
\newcommand{\ftilch}[1]{\ensuremath{\w{f}^\vee_{#1}}}                               
\newcommand{\und}[1]{\underline{\bf{#1}}}                                                   
\newcommand{\jump}[1]{\text{jump}_{#1}}                                                    
\newcommand{\wti}[1]{\text{wt}_{#1}}                                                           
\newcommand{\Sy}[1]{\ensuremath{\mathcal{S}_{#1}}}                               
\newcommand{\cycloI}[1]{\ensuremath{\mathcal{I}^{#1}}}                            
\newcommand{\maps}{\colon}                                                                       
\def\Shuffle{\,\raise 1pt\hbox{$\scriptscriptstyle\cup{\mskip                           
               -4mu}\cup$}\,}
\newcommand{\supp}[1]{\text{supp}(\ensuremath{#1})}                                 
\newcommand{\gammaplus}[2]{\ensuremath{\gamma^+_{#1;#2}}}                
\newcommand{\gammaminus}[2]{\ensuremath{\gamma^-_{#1;#2}}}              
\newcommand{\Lii}[1]{\ensuremath{\Li(#1)}}                                                   
\newcommand{\crystalmap}{\mathcal{T}}                                                       
\newcommand{\crystalmapopp}{\crystalmap^{\mathrm{opp}}} 
\newcommand{\cryphi}[1]{\p_{#1}}                                                                  
\newcommand{\Ccal}[1]{\mathcal{C}(#1)}                                                       
\newcommand{\Bp}{\mathcal{B}}  
\newcommand{\Bkr}{{B}^{1,1}}   
\newcommand{\Bkropp}{{B}^{\ell-1,1}}   
\newcommand{\Bopp}{\Bp^{\mathrm{opp}}} 	 
\newcommand{\F}{\mathbb{F}}                            	 
\newcommand{\Z}{\mathbb{Z}}                           	 
\newcommand{\N}{\mathbb{N}}                           	 
\newcommand{\Q}{\mathbb{Q}}                           	 
\newcommand{\wh}{\widehat}                              	 
\newcommand{\Fr}{\mathfrak}                            		 
\newcommand{\MB}{\mathbb}                            		 
\newcommand{\p}{\varphi}                                 		 
\DeclareMathOperator{\Hom}{Hom}                    	 
\DeclareMathOperator{\ind}{Ind}                          	 
\DeclareMathOperator{\res}{Res}                        	 
\DeclareMathOperator{\soc}{soc}                        	 
\DeclareMathOperator{\cosoc}{cosoc}                	 
\DeclareMathOperator{\Mod}{\dash \text{mod}}          
\DeclareMathOperator{\wt}{wt}                           		 
\DeclareMathOperator{\HOM}{HOM}                 	 	 
\DeclareMathOperator{\dash}{-}                        		 
\DeclareMathOperator{\UnitModule}{\mathds{1}}         
\newcommand{\zero}{\bm{0}}                                       
\newcommand{\omitt}[1]{}
\tikzstyle{new}=[fill =white, circle, inner sep=.5pt]
\tikzstyle{old}=[fill =blue!20, circle, inner sep=.5pt]
\newcommand{\Part}[1]{
 \foreach \xx [count=\ss from 1] in {#1}{
 	{\ifnum\ss=1
		\draw (0,\ss-1)--(\xx,\ss-1); 
		\fi}
   \draw (0,\ss) to (\xx,\ss);
   \foreach \y in {0, ..., \xx} {\draw (\y,\ss)--(\y,\ss-1);}
 }}
 \newcommand{\fPart}[2]{
 \foreach \xx [count=\ss from 1] in {#1}{
  \filldraw[#2] (0,\ss) -- (\xx,\ss)--(\xx,\ss-1)--(0,\ss-1);
 }
 \foreach \xx [count=\ss from 1] in {#1}{
 	{\ifnum\ss=1
		\draw (0,\ss-1)--(\xx,\ss-1); 
		\fi}
   \draw (0,\ss) to (\xx,\ss);
   \foreach \y in {0, ..., \xx} {\draw (\y,\ss)--(\y,\ss-1);}
 }}
\def\UNIT{.2} 
\newcounter{r}
\newcounter{c}
\newcommand\youngDiagram[2]{
        \setcounter{r}{0}
        \foreach \row in {#1} {
                \setcounter{c}{0}
                \foreach \b in \row {
                        \node at (\value{c}*#2 + .5*#2, \value{r}*#2+.5*#2)
{\b};
                        \addtocounter{c}{1}

                }
                \draw[step = {#2}] (0,\value{r}*#2) grid
(\value{c}*#2,\value{r}*#2+#2);
                \addtocounter{r}{-1}
        }
}
\def\Lattice{
	\coordinate (0) at (0,0);
	\coordinate (11) at (0,1.8);
	\coordinate (21) at (-1.2,3.8);\coordinate (22) at (1.2,3.9);
	\coordinate (31) at (-1.2,6.1);\coordinate (32) at (1.2,6.1);
	\foreach \xx in {1, ..., 4}{\coordinate (4\xx) at (-7.1+2.9*\xx,8.5);}

	\draw (0)--(11) (11)--(21) (11)--(22);
	\draw (21)--(31) (22)--(32);
	\draw (31)--(41)  (31)--(42)  (32)--(43)  (32)--(44);
\begin{scope}[every node/.style={fill=white}]
	\node at (0) {$\emptyset$};
	\node at (11) {};
	\node at (21) {};
	\node at (31) {};
	\node at (41) {};
	\node at (22) {};
	\node at (32) {};
	\node at (42) {};
	\node at (43) {};
	\node at (44) {};
\end{scope}
\foreach \xx in {-3,0,3} {\node at (\xx , 9.5) {$\vdots$}; }
}
\def\LatticeMV{
        \coordinate (0) at (0,0);
        \coordinate (11) at (0,1.8);
        \coordinate (21) at (-1.2,3.8);\coordinate (22) at (1.2,3.9);
        \coordinate (31) at (-1.2,6.1);\coordinate (32) at (1.2,6.1);
        \foreach \xx in {1, ..., 4}{\coordinate (4\xx) at (-7.1+2.9*\xx,8.5);}

        \draw (0)--(11) (11)--(21) (11)--(22);
        \draw (21)--(31) (22)--(32);
\begin{scope}[every node/.style={fill=white}]
        \node at (0) {$\emptyset$};
        \node at (11) {};
        \node at (21) {};
        \node at (31) {};
        \node at (22) {};
        \node at (32) {};
\end{scope}
\foreach \xx in {-3,0,3} {\node at (\xx , 7.5) {$\vdots$}; }
}
\def\LatticeHK{
        \coordinate (0) at (0,0);
        \coordinate (11) at (0,1.8);
        \coordinate (21) at (-1.8,3.8);\coordinate (22) at (1.8,3.8);
        \coordinate (31) at (-1.8,6.1);\coordinate (32) at (1.8,6.1);
        \foreach \xx in {1, ..., 3}{\coordinate (4\xx) at (-8.5+3.4*\xx,8.9);}
        \coordinate (41) at (-5,8.9);
        \coordinate (42) at (-1.3,8.9);
        \coordinate (43) at (1.6,8.9);
        \coordinate (44) at (4,8.9);

        \draw (0)--(11) (11)--(21) (11)--(22);
        \draw (21)--(31) (22)--(32);
        \draw (31)--(41)  (31)--(42)  (32)--(43)  (32)--(44);
\begin{scope}[every node/.style={fill=white}]
        \node at (0) {$\emptyset$};
        \node at (11) {};
        \node at (21) {};
        \node at (31) {};
        \node at (41) {};
        \node at (22) {};
        \node at (32) {};
        \node at (42) {};
        \node at (43) {};
        \node at (44) {};
\end{scope}
\foreach \xx in {-3,0,3} {\node at (\xx , 10) {$\vdots$}; }
}
\def\LatticeHKWider{
        \coordinate (0) at (0,0);
        \coordinate (11) at (0,1.8);
        \coordinate (21) at (-1.8,3.8);\coordinate (22) at (2,3.8);
        \coordinate (31) at (-1.8,6.1);\coordinate (32) at (2,6.1);
        \coordinate (41) at (-5.2,8.9); \coordinate (42) at (-.7,8.9); \coordinate (43) at (3,8.9); \coordinate (44) at (6.5,8.9);

        \draw (0)--(11) (11)--(21) (11)--(22);
        \draw (21)--(31) (22)--(32);
        \draw (31)--(41)  (31)--(42)  (32)--(43)  (32)--(44);
\begin{scope}[every node/.style={fill=white}]
        \node at (0) {$\emptyset$};
        \node at (11) {};
        \node at (21) {};
        \node at (31) {};
        \node at (41) {};
        \node at (22) {};
        \node at (32) {};
        \node at (42) {};
        \node at (43) {};
        \node at (44) {};
\end{scope}
\foreach \xx in {-3,0,3} {\node at (\xx , 10) {$\vdots$}; }
}
\begin{document}

\title[Categorifying 
$\Bp \otimes B(\Lambda_i)$
]{Categorifying the tensor product of a level 1 highest weight and perfect crystal in type $A$}


\author{Monica Vazirani}\thanks{This work was partially supported
by NSF-MSPRF, NSA grant H98230-12-1-0232, and the Simons Foundation.}
\address{Mathematics Department, One Shields Ave, Davis, CA 95616}
\email{}
\thanks{}

\subjclass[2010]{Primary 05E10; Secondary 20C08}

\omitt{
The Mathematics Subject Classification numbers are primarily
81R50; 05E10 and secondarily  17B37;  20C08.
key words include ``Khovanov-Lauda-Rouquier algebras" and "quiver
Hecke algebras."
crystal graphs, categorification
}

\date{}

\begin{abstract}
We use KLR algebras to categorify a crystal isomorphism
between a highest weight crystal and the tensor product of a perfect
crystal and another highest weight crystal, all in level $1$ type $A$
affine. 
 The nodes 
of the perfect crystal correspond
to a family of trivial modules 
and the nodes of 
the highest weight crystal 
 correspond to simple modules, which we may also parameterize
by $\ell$-restricted partitions.
In the case $\ell$ is a prime, one can reinterpret all the
results for the symmetric group in characteristic $\ell$.
The crystal operators correspond to socle of restriction
and behave compatibly with the rule for tensor product
of crystal graphs.
\end{abstract}

\maketitle

\tableofcontents

\section{Introduction}
\label{sec:in}
Kang-Kashiwara \cite{KK12} and Webster \cite{Web}
show the cyclotomic Khovanov-Lauda-Rouquier (KLR) algebra $R^\Lambda$
categorifies the highest weight representation $V(\Lambda)$
in arbitrary symmetrizable type.
(KLR algebras are also known as quiver Hecke algebras.)
We will say the {\em combinatorial} version of this statement is that
$R^\Lambda$ categorifies the crystal $B(\Lambda)$, where
simple modules correspond to nodes, and functors that take socle
of restriction correspond to arrows, i.e.~the Kashiwara crystal
operators. 
Webster \cite{Web} and Losev-Webster \cite{LW} 
categorify the tensor product of highest weight modules,
and hence the tensor product of highest weight crystals.
However, one can consider a tensor product of crystals
\begin{gather}
\label{eq-tensor}
\Bp \otimes B(\Lambda) \simeq B(\Lambda')
\end{gather}
where $\Lambda, \Lambda' \in P^+$ are of level $k$ and
$\Bp$ is a perfect crystal of level $k$.
In this paper, we (combinatorially) categorify the crystal isomorphism 
\eqref{eq-tensor} in the case
the level $k=1$ for type $A_{{\ell-1}}^{(1)}$ and $\Bp = \Bkr$
which is drawn in Figure \ref{fig-perfect-crystal}.
Each node of $\Bp$ corresponds to a family of trivial modules,
but note this does {\em not} give a categorification of $\Bp$.
(By symmetry we have similar results for $\Bp = B^{\ell-1,1}$
of Figure \ref{fig-Bopp}
whose nodes correspond to sign modules.)

We note that this gives a construction of simple modules that
is somewhat intermediate between the crystal operator construction
and the Specht module construction.  Combinatorially, the former
corresponds to building an $\ell$-restricted
partition one (good) box at a time.
Our construction builds a partition one row at a time,
or dually one column at a time.
The Specht module construction (at least for $\F_\ell \Sy{n}$
or the Hecke algebra of type $A$) builds the simple from the whole
partition,  constructing the simple as a
subquotient of an induced trivial module from a parabolic
subalgebra that corresponds to the partition.
However, this paper also describes how socle of restriction
interacts with the construction.
One can also recover this construction for finite type
$A_{\ell-1}$ as its Dynkin diagram is a subdiagram of that
of type $A^{(1)}_{\ell-1}$, or recovers characteristic $0$
constructions taking $\ell \to \infty$.
For a construction of simple modules
related to the crystal $B(\infty)$
for finite type KLR algebras see \cite{BKOP}.

This paper is based on unpublished 
work of the author \cite{V03,Vnotes}
which was done for the affine Hecke algebra of type $A$
at an $\ell$th root of unity.
We chose to rewrite this in the language of KLR algebras
to appeal to the modern reader and also make it easier
to then generalize the theorem to other affine types in
\cite{KV}.

I wish to thank Henry Kvinge for his help with the figures
and whose feedback greatly improved the exposition.

\section{Type $A$ Cartan datum and crystals}

\subsection{Cartan datum for type $A^{(1)}_{\ell-1}$}
Fix an integer $\ell \geq 2$. In this paper we will work solely in type $A^{(1)}_{\ell-1}$. 
\begin{figure}
\begin{center}
\begin{tikzpicture}
\draw (0,0) circle (.1cm);
\draw (2,0) circle (.1cm);
\node at (0,-.75) {$0$};
\node at (2,-.75) {$1$};
\draw (0.1,.05) -- (1.9,.05);
\draw (0.1,-.05) -- (1.9,-.05);
\draw (0.2,0) -- (0.3,.2);
\draw (0.2,0) -- (0.3,-.2);
\draw (1.8,0) -- (1.7,.2);
\draw (1.8,0) -- (1.7,-.2);
\end{tikzpicture}
\qquad 
\qquad 
\begin{tikzpicture} [scale=.7]
\draw (0,0) circle (.1cm);
\node at (0,-.75) {$1$};
\draw (1.5,0) circle (.1cm);
\node at (1.5,-.75) {$2$};
\draw (3,0) circle (.1cm);
\node at (3,-.75) {$3$};
\draw (6,0) circle (.1cm);
\node at (6,-.75) {$\ell-2$};
\draw (7.5,0) circle (.1cm);
\node at (7.5,-.75) {$\ell-1$};
\draw (3,2) circle (.1cm);
\node at (3.5,2.5) {$0$};
\draw (.071,.071) -- (2.9,1.95);
\draw (.1,0) -- (1.4,0);
\draw (1.6,0) -- (2.9,0);
\draw (3.1,0) -- (3.7,0);
\draw (5.3,0) -- (5.9,0);
\draw (6.1,0) -- (7.4,0);
\draw (7.429,0.071) -- (3.1,1.95);
\node at (4.1,0) {$\cdot$};
\node at (4.5,0) {$\cdot$};
\node at (4.9,0) {$\cdot$};
\end{tikzpicture}
\end{center}
\caption{
\label{figure-dynkin}
The Dynkin diagram for
$A^{(1)}_{1}$ is on the left and
and the Dynkin diagram for $A^{(1)}_{\ell-1}$ with $\ell > 2$ is
on the right.}
\end{figure}
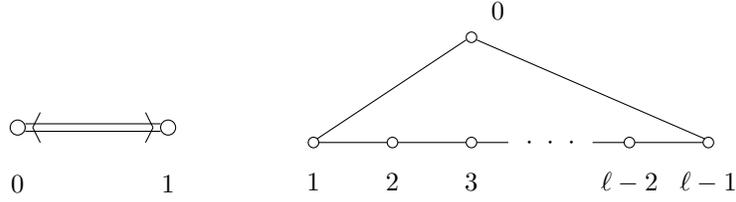
Let $I$ be the indexing set
\begin{equation}
I = \{ 0, 1, \dots, \ell-1\}.
\end{equation}
Let $[a_{ij}]_{i,j \in I}$ denote the associated Cartan matrix. 
For $\ell > 2$ the type $A$ Cartan matrix is the $\ell \times \ell$
matrix
\begin{equation*}
\left[
\begin{array}{rrrrrrr}
2& -1& 0 & &\cdots  &0&0 \\
-1& 2 & -1 && \cdots &0& 0 \\
\vdots& & &\ddots & &&\vdots \\
0& 0& & \cdots& -1&2& -1 \\
0& 0& & \cdots& 0&-1& 2
\end{array}
\right].
\end{equation*}
When $\ell = 2$ it is 
\begin{equation*}
\left[
\begin{array}{rr}
2&-2 \\
-2&2
\end{array}
\right].
\end{equation*}

Following \cite{Kac85} we let $\Fr{h}$ be a Cartan subalgebra, $\prod = \{\alpha_0, \dots, \alpha_{\ell-1}\}$ its system of simple roots, $\prod^{\vee} = \{h_0, \dots, h_{\ell-1}\}$ its simple coroots, and $Q$ and $Q^\vee$ the root and coroot lattices respectively. Then set
\begin{equation}
Q^+ = \bigoplus_{i \in I} \mathbb{Z}_{\geq 0} \alpha_i.
\end{equation}
For an element $\nu \in Q^+$, we define its {\emph{height}}, $|\nu|$, to be the sum of the coefficients, i.e. if $\nu = \sum_{i \in I} \nu_i \alpha_i$ then
\begin{equation}
|\nu| = \sum_{i \in I} \nu_i.
\end{equation}
We also have a symmetric bilinear form
\begin{equation*}
(\; , \; ) : \Fr{h}^* \times \Fr{h}^* \rightarrow \mathbb{C}
\end{equation*}
which satisfies
\begin{equation}
a_{ij} = \langle h_i, \alpha_j \rangle =  
\frac{2 (\alpha_i, \alpha_j)}{ (\alpha_i, \alpha_i)}
\end{equation}
where $\langle \; , \; \rangle: \Fr{h} \times \Fr{h}^* \rightarrow \mathbb{C}$ is the canonical pairing. Using this pairing we define the fundamental weights $\{\Lambda_i \; | \; i \in I\}$ via
\begin{equation*}
\langle h_j, \Lambda_i \rangle = \delta_{ji}.
\end{equation*}
The weight lattice is $ \bigoplus_{i \in I} \MB{Z} \Lambda_i $ and the integral dominant weights are
\begin{equation*}
P^+ = \bigoplus_{i \in I} \mathbb{Z}_{\geq 0} \Lambda_i.
\end{equation*}

\begin{remark}
\label{rem-k}
Because in this paper we work exclusively with Cartan datum associated with $A^{(1)}_{\ell-1}$, it is often convenient to identify elements of $I$ with $\mathbb{Z}/\ell \mathbb{Z}$, so when stating that $k \in I$, we will usually think of $k \in  \mathbb{Z}/\ell \mathbb{Z}$ even if we neglect to write $\overline{k}$ or $k \bmod \ell$. 
We will often be considering a sequence of $k$ operators,
$k \in \N := \Z_{\ge 0}$,
but the $k$th operator may be indexed by $(k-1)\bmod \ell$, for which
it is convenient to relax notation.
\end{remark}

%
\subsection{Review of crystals}
\label{sec-crystal-review}
%

We recall the tensor category of crystals following
Kashiwara \cite{Kas95}, see also \cite{Kas90b,Kas91,KS97}.

A {\emph{crystal}} is a set $B$ together with maps
\begin{itemize}
  \item $\wt \maps B \longrightarrow P$,
  \item $\ep{i}, \p_i : B \longrightarrow \MB{Z} \sqcup \{-\infty\}$ \quad for $i \in I$,
  \item $\etil{i}, \ftil{i} \maps B \longrightarrow B \sqcup \{0\}$ \quad for $i \in I$,
\end{itemize}
such that
\begin{enumerate}[C1.]
 \item $\p_{i}(b) =\ep{i}(b)+ \langle h_i, \wt(b) \rangle$ \quad for any $i$.
 \item If $b \in B$ satisfies $\etil{i} b \neq 0$, then
  \begin{align*}
 &  \ep{i}(\etil{i}b) = \ep{i}(b)-1, & \p_{i}(\etil{i}b) = \p_{i}(b) +1, & & \wt(\etil{i}b) = \wt(b)+\alpha_i.
  \end{align*}
 \item If $b \in B$ satisfies $\ftil{i} b \neq 0$, then
  \begin{align*}
 &  \ep{i}(\ftil{i}b) = \ep{i}(b)+1,
 & \p_{i}(\ftil{i}b) = \p_{i}(b)-1,
 & &\wt(\ftil{i}b) = \wt(b)-\alpha_i.
  \end{align*}
 \item For $b_1$, $b_2 \in B$, $b_2=\ftil{i}b_1$ if and only if
$\etil{i}b_2 = b_1$.
 \item If $\p_{i}(b) = -\infty$, then $\etil{i}b=\ftil{i}b=0$.
\end{enumerate} \medskip

If $B_1$ and $B_2$ are two crystals, then a {\emph{morphism}} $\psi \maps B_1 \rightarrow B_2$ of crystals is a map $$\psi \maps B_1 \sqcup \{0\} \rightarrow B_2 \sqcup \{0\}$$ satisfying the following properties:
\begin{enumerate}[M1.]
  \item $\psi(0) = 0$.
  \item If $\psi(b) \neq 0$ for $b \in B_1$, then
\begin{align*}
  & \wt(\psi(b)) = \wt(b),
  & \ep{i}(\psi(b)) = \ep{i}(b),
  & &\p_{i}(\psi(b)) = \p_{i}(b).
\end{align*}
 \item For $b \in B_1$ such that $\psi(b) \neq 0$ and $\psi(\etil{i}b) \neq 0$, we have $\psi(\etil{i}b) = \etil{i}(\psi(b))$.
 \item For $b \in B_1$ such that $\psi(b) \neq 0$ and $\psi(\ftil{i}b) \neq 0$, we have $\psi(\ftil{i}b) = \ftil{i}(\psi(b))$.
\end{enumerate}
A morphism $\psi$ of crystals is called {\emph{strict}} if
\begin{equation*}
  \psi \circ \etil{i} = \etil{i} \circ \psi, \qquad \quad
\psi  \circ \ftil{i} = \ftil{i} \circ \psi,
\end{equation*}
and an {\emph{embedding}} if $\psi$ is injective. \medskip

Given two crystals $B_1$ and $B_2$ their tensor product
$B_1 \otimes B_2$ (using the reverse Kashiwara convention)
 has underlying set $\{b_1 \otimes b_2; b_1 \in B_1, \;
\text{and} \; b_2 \in B_2 \}$ where we identify $b_1 \otimes 0 = 0
\otimes b_2 = 0$. The crystal structure is given as follows:
\begin{align} \label{tensor-product-crystals}
  & \wt(b_1 \otimes b_2) = \wt(b_1) + \wt (b_2), \\
  & \ep{i}(b_1 \otimes b_2) = \max\{ \ep{i}(b_2), \ep{i}(b_1)-\langle h_i, \wt(b_2) \rangle\},
\\ &
\cryphi{i}(b_1 \otimes b_2) = \max\{ \cryphi{i}(b_2) + \langle h_i,\wt(b_1) \rangle, \cryphi{i}(b_1)\}, 
\end{align}
\begin{align}
  &\etil{i}(b_1 \otimes b_2 ) = 
  \begin{cases}
    \etil{i}b_1 \otimes b_2 & \text{if $\ep{i}(b_1) > \cryphi{i}(b_2)$}\\
    b_1 \otimes \etil{i}b_2 & \text{if $\ep{i}(b_1) \leq \cryphi{i}(b_2)$},
  \end{cases}
\label{eq_ei_tensor}
\\
& \ftil{i}(b_1 \otimes b_2 ) =
  \begin{cases}
    \ftil{i}b_1 \otimes b_2 &  \text{if $\ep{i}(b_1) \geq \cryphi{i}(b_2)$}\\
    b_1 \otimes \ftil{i}b_2 & \text{if $\ep{i}(b_1) < \cryphi{i}(b_2)$}.
\label{eq_fi_tensor}
  \end{cases}
\end{align}

Given a crystal $B$, we can draw its  associated crystal graph
with nodes (or vertices) $B$ and $I$-colored arrows (directed edges)
as follows.
When $\etil{i}b = a$ (so $b = \ftil{i}a$) we draw an $i$-colored arrow $a \xrightarrow{i} b$. We also say $b$ has an incoming $i$-arrow and $a$ has an outgoing $i$-arrow.

\section{Level $1$ crystals in type $A^{(1)}_{\ell-1}$}
\label{sec-crystal1}

\begin{figure} 
\begin{center}
\begin{tikzpicture} [scale=.5]
\draw (0,.25) ellipse (.65cm and .32cm);
\draw (3,.25) ellipse (.65cm and .32cm);
\draw (8,.25) ellipse (.65cm and .32cm);
\draw (11,.25) ellipse (.65cm and .32cm);
\draw[->,thick] (.8,.25) -- (2.2,.25);
\draw[->,thick] (8.8,.25) -- (10.1,.25);
\draw[->,thick] (3.8,.25) -- (4.6,.25);
\draw[->,thick] (6.3,.25) -- (7.2,.25);
\draw [fill] (5,0.25) circle [radius=0.03];
\draw [fill] (5.5,0.25) circle [radius=0.03];
\draw [fill] (6,0.25) circle [radius=0.03];
\node at (0,.25) {\scriptsize{0}};
\node at (3,.25) {\scriptsize{1}};
\node at (8,.25) {\scriptsize{$\ell$-2}};
\node at (11,.25) {\scriptsize{$\ell$-1}};
\node at (1.4,.7) {\scriptsize{1}};
\node at (4.2,.7) {\scriptsize{2}};
\node at (6.7,.7) {\scriptsize{$\ell$-2}};
\node at (9.4,.7) {\scriptsize{$\ell$-1}};
\node at (5.5,3.5) {\scriptsize{0}};
\draw[->,thick] (10.8,.75) to [out = 130, in = 50] (0.1,.75);
\end{tikzpicture}
\end{center}
\caption{\label{fig-perfect-crystal}
The level $1$ perfect  crystal $\Bp$, which is also denoted $B^{1,1}$.}
\end{figure}

The level $1$ highest weight crystal, or fundamental crystal,
 $B(\Lambda_i)$ has a model
(see Figure \ref{fig-fundamental}) with nodes $\ell$-{\em restricted}
partitions, i.e. $\lambda = (\lambda_1, \ldots, \lambda_t)$
such that $\lambda_r \in \Z_{\ge 0}$, 
$0 \le \lambda_{r} - \lambda_{r+1} < \ell$ for all $r$.
Observe that for fixed $\ell$, as directed graphs
$B(\Lambda_0)$ and $B(\Lambda_i)$ are identical. 
The edge labels or ``colors" for $B(\Lambda_i)$ are
obtained from those of $B(\Lambda_0)$ by adding $i \bmod \ell$.

Let $\Bp$ be the crystal graph in Figure \ref{fig-perfect-crystal}.
$\Bp$ is an example of a level $1$ 
\emph{perfect} crystal. See \cite{KKMMNN} for the definition
of a perfect crystal and for many of its important properties.
$\Bp$, often denoted $B^{1,1}$ in the literature
 is also an example of a Kirillov-Reshetikhin crystal.
Observe that we have parameterized the nodes of $\Bp$ so that 
\begin{equation*}
\ep{i}(
\begin{tikzpicture}[baseline=-2pt]
        \node at (0,0) {\;\;$k$\;\;};
        \draw (0,0) ellipse (.38cm and .2cm);
\end{tikzpicture}
) = \delta_{i,k}.
\end{equation*}

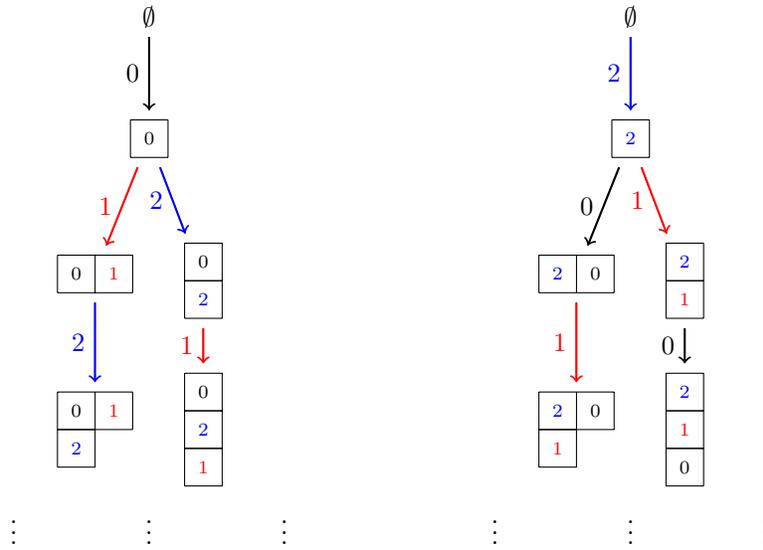
\begin{figure}[ht]
\begin{center}

\begin{tikzpicture} [scale=.08]
\node at (0,0) {\begin{tikzpicture}[xscale=3*\UNIT, yscale=-4.5*\UNIT]
\begin{scope}
        \LatticeMV 
\end{scope} 
\begin{scope}[every node/.style={fill=white}]
        \node (a) at (0) {$\emptyset$};
        \node (b) at (11) {\begin{tikzpicture}\youngDiagram{{\scriptsize{0}}}{.5};\end{tikzpicture}};
        \node (c) at (21) {\begin{tikzpicture}\youngDiagram{{\scriptsize{0},\scriptsize{\color{red}{1}}}}{.5};\end{tikzpicture}};
        \node (d) at (31) {\begin{tikzpicture}\youngDiagram{{\scriptsize{0},\scriptsize{\color{red}{1}}},{\scriptsize{\color{blue}{2}}}}{.5};\end{tikzpicture}};
        \node (g) at (22) {\begin{tikzpicture}\youngDiagram{{\scriptsize{0}},{\scriptsize{\color{blue}{2}}}}{.5};\end{tikzpicture}};
        \node (i) at (32) {\begin{tikzpicture}\youngDiagram{{\scriptsize{0}},{\scriptsize{\color{blue}{2}}},{\scriptsize{\color{red}{1}}}}{.5};\end{tikzpicture}};
\end{scope}
\begin{scope}[black, thick,->]
\draw (a)--(b) node[midway,left] {0};
\draw[red] (b)--(c) node[midway,left] {\color{red}{1}};
\draw[blue] (b)--(g) node[midway,left] {\color{blue}{2}};
\draw[red] (g)--(i) node[midway,left] {\color{red}{1}};
\draw[blue] (c)--(d) node[midway,left] {\color{blue}{2}};
\end{scope}
\end{tikzpicture}};

\node at (80,0) {\begin{tikzpicture}[xscale=3*\UNIT, yscale=-4.5*\UNIT]
\begin{scope}
        \LatticeMV 
\end{scope} 
\begin{scope}[every node/.style={fill=white}]
        \node (a) at (0) {$\emptyset$};
        \node (b) at (11) {\begin{tikzpicture}\youngDiagram{{\scriptsize{\color{blue}{2}}}}{.5};\end{tikzpicture}};
        \node (c) at (21) {\begin{tikzpicture}\youngDiagram{{\scriptsize{{\color{blue}{2}}},\scriptsize{\color{black}{0}}}}{.5};\end{tikzpicture}};
        \node (d) at (31) {\begin{tikzpicture}\youngDiagram{{\scriptsize{{\color{blue}{2}}},\scriptsize{\color{black}{0}}},{\scriptsize{\color{red}{1}}}}{.5};\end{tikzpicture}};
        \node (g) at (22) {\begin{tikzpicture}\youngDiagram{{\scriptsize{{\color{blue}{2}}}},{\scriptsize{\color{red}{1}}}}{.5};\end{tikzpicture}};
        \node (i) at (32) {\begin{tikzpicture}\youngDiagram{{\scriptsize{{\color{blue}{2}}}},{\scriptsize{\color{red}{1}}},{\scriptsize{\color{black}{0}}}}{.5};\end{tikzpicture}};
\end{scope}
\begin{scope}[black, thick,->]
\draw[blue] (a)--(b) node[midway,left] {{\color{blue}{2}}};
\draw (b)--(c) node[midway,left] {\color{black}{0}};
\draw[red] (b)--(g) node[midway,left] {\color{red}{1}};
\draw (g)--(i) node[midway,left] {\color{black}{0}};
\draw[red] (c)--(d) node[midway,left] {\color{red}{1}};
\end{scope}
\end{tikzpicture}};

\end{tikzpicture}

\caption{ \label{fig-fundamental}
$B(\Lambda_0)$ and $ B(\Lambda_2)$
for $\ell = 3$.
}
\end{center}
\end{figure}

 Then
$\crystalmap:B(\Lambda_{i})
\xrightarrow{\simeq}
 \Bp \otimes B(\Lambda_{i-1})$
is an isomorphism of crystals. The isomorphism is pictured in Figure
\ref{fig-crystal-iso} for $i = 0$ and $\ell = 3$.
Combinatorially, $\crystalmap(\lambda) = $
\begin{tikzpicture}[baseline=-2pt] 
	\node at (0,0) {\;\;$k$\;\;}; 
        \draw (0,0) ellipse (.38cm and .2cm);
\end{tikzpicture}$ \; \otimes\; \mu$
where
$k\equiv \lambda_1 +i-1 \bmod \ell$
and $\mu = (\lambda_2,
\dots, \lambda_t)$ if $\lambda =(\lambda_1, \lambda_2, \cdots,
\lambda_t)$.
So we obtain $\mu$ from $\lambda$ by removing its top row.
In Figure \ref{fig-crystal-iso}, we draw
\begin{equation}
\begin{tikzpicture}
			\node at (0,0) {$\crystalmap(\lambda) = $};
			\node (z) at (1,.3) {\scriptsize{$k$}};
                       	\draw (z) ellipse (.35cm and .15cm);
			\node at (1,0) {$\otimes$};
			\node at (1,-.3) {$\mu$};
\end{tikzpicture}
\end{equation}
so the visual of the top row removal stands out.
Note $\lambda_1 - \mu_1 = \lambda_1 - \lambda_2 < \ell$
means that $\crystalmap$ has a well-defined inverse.


\begin{figure}[ht]
\begin{center}
\begin{tikzpicture} [scale=.018]
\node at (0,0) {\begin{tikzpicture}[xscale=3*\UNIT, yscale=-4.5*\UNIT]
\begin{scope}
	\Lattice 
\end{scope} 
\begin{scope}[every node/.style={fill=white}]
	\node (a) at (0) {$\emptyset$};
	\node (b) at (11) {\begin{tikzpicture}\youngDiagram{{\scriptsize{0}}}{.35};\end{tikzpicture}};
	\node (c) at (21) {\begin{tikzpicture}\youngDiagram{{\scriptsize{0},\scriptsize{\color{red}{1}}}}{.35};\end{tikzpicture}};
	\node (d) at (31) {\begin{tikzpicture}\youngDiagram{{\scriptsize{0},\scriptsize{\color{red}{1}}},{\scriptsize{\color{blue}{2}}}}{.35};\end{tikzpicture}};
	\node (e) at (41) {\begin{tikzpicture}\youngDiagram{{\scriptsize{0},\scriptsize{\color{red}{1}}},{\scriptsize{\color{blue}{2}},\scriptsize{0}}}{.35};\end{tikzpicture}};
	\node (g) at (22) {\begin{tikzpicture}\youngDiagram{{\scriptsize{0}},{\scriptsize{\color{blue}{2}}}}{.35};\end{tikzpicture}};
	\node (i) at (32) {\begin{tikzpicture}\youngDiagram{{\scriptsize{0}},{\scriptsize{\color{blue}{2}}},{\scriptsize{\color{red}{1}}}}{.35};\end{tikzpicture}};
	\node (k) at (42) {\begin{tikzpicture}\youngDiagram{{\scriptsize{0},\scriptsize{\color{red}{1}},\scriptsize{\color{blue}{2}}},{\scriptsize{\color{blue}{2}}}}{.35};\end{tikzpicture}};
	\node (l) at (43) {\begin{tikzpicture}\youngDiagram{{\scriptsize{0},\scriptsize{\color{red}{1}}},{\scriptsize{\color{blue}{2}}},{\scriptsize{\color{red}{1}}}}{.35};\end{tikzpicture}};
	\node (m) at (44) {\begin{tikzpicture}\youngDiagram{{\scriptsize{0}},{\scriptsize{\color{blue}{2}}},{\scriptsize{\color{red}{1}}},{\scriptsize{0}}}{.35};\end{tikzpicture}};
\end{scope}
\begin{scope}[black, thick,->]
\draw (a)--(b) node[midway,left] {0};
\draw[red] (b)--(c) node[midway,left] {\color{red}{1}};
\draw (d)--(e) node[midway,left] {0};
\draw[blue] (b)--(g) node[midway,left] {\color{blue}{2}};
\draw[red] (g)--(i) node[midway,left] {\color{red}{1}};
\draw[blue] (c)--(d) node[midway,left] {\color{blue}{2}};
\draw[blue] (d)--(k) node[midway,left] {\color{blue}{2}};
 \draw[red] (i)--(l) node[midway,left] {\color{red}{1}};
\draw (i)--(m) node[midway,left] {0};
\end{scope}
\end{tikzpicture}};

\node at (400,0) {\begin{tikzpicture}[xscale=3*\UNIT, yscale=-4.5*\UNIT]
\begin{scope}
	\Lattice 
\end{scope} 
\begin{scope}[every node/.style={fill=white}]
	\node (a) at (0) {\begin{tikzpicture} 
	                           \node at (0,0.1) {$\emptyset$};
	                           \node at (0,.47) {\scriptsize{$\otimes$}}; 
	                           \node (z) at (0,.85) {\scriptsize{\color{blue}{2}}};
	                           \draw (z) ellipse (.35cm and .15cm);
	                           \end{tikzpicture}};
	\node (b) at (11) {\begin{tikzpicture}
	                           \node at (0,0.1) {$\emptyset$};
	                           \node at (0,.47) {\scriptsize{$\otimes$}};
	                           \node (z) at (0,.85) {\scriptsize{0}};
	                           \draw (z) ellipse (.35cm and .15cm);
	                           \end{tikzpicture}};
	\node (c) at (21) {\begin{tikzpicture}
				 \node at (0,0.1) {$\emptyset$};				  
				  \node at (0,.47) {\scriptsize{$\otimes$}};
				 \node (z) at (0,.85) {\scriptsize{\color{red}{1}}};
	                           \draw (z) ellipse (.35cm and .15cm);
	                           \end{tikzpicture}};
	\node (d) at (31) {\begin{tikzpicture}
				  \node at (0,.05) {\begin{tikzpicture}\youngDiagram{{\scriptsize{\color{blue}{2}}}}{.35}\end{tikzpicture}};
				  \node at (0,.47) {\scriptsize{$\otimes$}};
				  \node (z) at (0,.85) {\scriptsize{\color{red}{1}}};
	                           \draw (z) ellipse (.35cm and .15cm);
	                           \end{tikzpicture}};
	\node (e) at (41) {\begin{tikzpicture}
	                            \node at (0,.05) {\begin{tikzpicture}\youngDiagram{{\scriptsize{\color{blue}{2}},\scriptsize{0}}}{.35};\end{tikzpicture}};
	                            \node at (0,.47) {\scriptsize{$\otimes$}};
	                           \node (z) at (0,.85) {\scriptsize{\color{red}{1}}};
	                           \draw (z) ellipse (.35cm and .15cm);
	                           \end{tikzpicture}};
	\node (g) at (22) {\begin{tikzpicture}
	                          \node at (0,.05) {\begin{tikzpicture}\youngDiagram{{\scriptsize{\color{blue}{2}}}}{.35};\end{tikzpicture}};
				\node at (0,.47) {\scriptsize{$\otimes$}};
				\node (z) at (0,.85) {\scriptsize{0}};
	                         \draw (z) ellipse (.35cm and .15cm);
	                          \end{tikzpicture}};
	\node (i) at (32) {\begin{tikzpicture}
				\node at (0,-.15) {\begin{tikzpicture}\youngDiagram{{\scriptsize{\color{blue}{2}}},{\scriptsize{\color{red}{1}}}}{.35};\end{tikzpicture}};
				\node at (0,.47) {\scriptsize{$\otimes$}};
				\node (z) at (0,.85) {\scriptsize{0}};
	                          \draw (z) ellipse (.35cm and .15cm);
	                          \end{tikzpicture}};
	\node (k) at (42) {\begin{tikzpicture}
				\node at (0,0.05) {\begin{tikzpicture}\youngDiagram{{\scriptsize{\color{blue}{2}}}}{.35};\end{tikzpicture}};
				 \node at (0,.47) {\scriptsize{$\otimes$}};
				\node (z) at (0,.85) {\scriptsize{\color{blue}{2}}};
	                          \draw (z) ellipse (.35cm and .15cm);
	                           \end{tikzpicture}};
	\node (l) at (43) {\begin{tikzpicture}
				\node at (0,-.15) {\begin{tikzpicture}\youngDiagram{{\scriptsize{\color{blue}{2}}},{\scriptsize{\color{red}{1}}}}{.35};\end{tikzpicture}};
				\node at (0,.47) {\scriptsize{$\otimes$}};
				\node (z) at (0,.85) {\scriptsize{\color{red}{1}}};
	                           \draw (z) ellipse (.35cm and .15cm);
	                           \end{tikzpicture}};
	\node (m) at (44) {\begin{tikzpicture}
				\node at (0,.6) {};
				\node at (0,-.3) {\begin{tikzpicture}\youngDiagram{{\scriptsize{\color{blue}{2}}},{\scriptsize{\color{red}{1}}},{\scriptsize{0}}}{.35};\end{tikzpicture}};
				\node at (0,.47) {\scriptsize{$\otimes$}};
				\node (z) at (0,.85) {\scriptsize{0}};
	                           \draw (z) ellipse (.35cm and .15cm);
	                           \end{tikzpicture}};
\end{scope}
\begin{scope}[black, thick,->]
\draw (a)--(b) node[midway,left] {\scriptsize{0}};
 \draw[red] (b)--(c) node[midway,left] {\scriptsize{\color{red}{1}}};
\draw (d)--(e) node[midway,left] {\scriptsize{0}};
\draw[blue] (b)--(g) node[midway,left] {\scriptsize{\color{blue}{2}}};
\draw[red] (g)--(i) node[midway,left] {\scriptsize{\color{red}{1}}};
\draw[blue] (c)--(d) node[midway,left] {\scriptsize{\color{blue}{2}}};
\draw[blue] (d)--(k) node[midway,left] {\scriptsize{\color{blue}{2}}};  \draw[red] (i)--(l) node[midway,left] {\scriptsize{\color{red}{1}}};
\draw (i)--(m) node[midway,left] {\scriptsize{0}};
\end{scope}
\end{tikzpicture}};

\end{tikzpicture}
\caption{ \label{fig-crystal-iso}
The isomorphism $B(\Lambda_0) \simeq \Bp \otimes B(\Lambda_2)$
for $\ell = 3$.
}
\end{center}
\end{figure}

When drawing our model of $B(\Lambda_i)$,
we label each box of an $\ell$-restricted partition with 
$k \in I$, such that the main diagonal gets label $i$, 
and labels increase by $1 \bmod \ell $ as one increases
diagonals (moving right).
In this manner, the last box in the top row of $\lambda$
is labeled $k$ when
 $\crystalmap(\lambda) = $
\begin{tikzpicture}[baseline=-2pt] 
        \node at (0,0) {\;\;$k$\;\;}; 
        \draw (0,0) ellipse (.38cm and .2cm);
\end{tikzpicture}$ \; \otimes\; \mu$.
Note further that
if we have a $k$-arrow $\gamma \xrightarrow{k} \lambda$
then the box $\lambda/\gamma$ is labeled $k$ (though not necessarily
conversely). 
In fact, once one knows the structure of $\Bp$ and
the tensor product rule for crystals, one can obtain
the rule for which $k$-box $\etil{k}$ removes by
iterating $\crystalmap$.

$\Bopp$, often denoted $\Bkropp$ in the literature,
is another level $1$ perfect
crystal and is also an example of a Kirillov-Reshetikhin crystal.
$\Bopp$ is pictured in Figure \ref{fig-Bopp}; note it can be
obtained from $\Bp$ from reversing orientation of all arrows,
and we chose to relabel nodes so that still
\begin{equation*}
\ep{i}(
\begin{tikzpicture}[baseline=-2pt]
        \node at (0,0) {\;\;$k$\;\;};
        \draw (0,0) ellipse (.38cm and .2cm);
\end{tikzpicture}
) = \delta_{i,k}.
\end{equation*}

We have another crystal isomorphism
$\crystalmapopp:B(\Lambda_{i})
\xrightarrow{\simeq}
 \Bopp \otimes B(\Lambda_{i+1})$.
See Figure \ref{fig-crystal-isom-opp}.
This isomorphism is compatible with the model of $B(\Lambda_i)$
that labels nodes with $\ell$-{\em regular} partitions, that is,
those partitions $\mu$ such that the transposed diagram $\mu^T$
is $\ell$-restricted.  Then the isomorphism $\crystalmapopp$
corresponds to column
removal, in the same way $\crystalmap$ corresponds to row removal.

While the underlying $I$-colored directed graphs $B(\Lambda_i)$ are
identical, one does not obtain the $\ell$-regular model by merely
transposing the partition indexing each node of the $\ell$-restricted
model.
See Section \ref{sec-modulecrystalmodel}
for another model of $B(\Lambda_i)$ that comes from KLR algebras.

There are other level $1$ perfect crystals besides $\Bp$ and $\Bopp$,
but we do not consider them here.

\begin{figure} 
\begin{center}
\begin{tikzpicture} [scale=.5]
\draw (0,.25) ellipse (.65cm and .32cm);
\draw (3,.25) ellipse (.65cm and .32cm);
\draw (8,.25) ellipse (.65cm and .32cm);
\draw (11,.25) ellipse (.65cm and .32cm);
\draw[<-,thick] (.8,.25) -- (2.2,.25);
\draw[<-,thick] (8.8,.25) -- (10.1,.25);
\draw[<-,thick] (3.8,.25) -- (4.6,.25);
\draw[<-,thick] (6.3,.25) -- (7.2,.25);
\draw [fill] (5,0.25) circle [radius=0.03];
\draw [fill] (5.5,0.25) circle [radius=0.03];
\draw [fill] (6,0.25) circle [radius=0.03];
\node at (0,.25) {\scriptsize{1}};
\node at (3,.25) {\scriptsize{2}};
\node at (8,.25) {\scriptsize{$\ell$-1}};
\node at (11,.25) {\scriptsize{0}};
\node at (1.4,.7) {\scriptsize{1}};
\node at (4.2,.7) {\scriptsize{2}};
\node at (6.7,.7) {\scriptsize{$\ell$-2}};
\node at (9.4,.7) {\scriptsize{$\ell$-1}};
\node at (5.5,3.5) {\scriptsize{0}};
\draw[<-,thick] (10.8,.75) to [out = 130, in = 50] (0.1,.75);
\end{tikzpicture}
\end{center}
\caption{\label{fig-Bopp}
The level $1$ perfect  crystal $\Bopp$,
which is also denoted $\Bkropp$.}
\end{figure}


\begin{figure}[ht]
\begin{center}
\begin{tikzpicture} [scale=.018]
\node at (0,0) {\begin{tikzpicture}[xscale=3*\UNIT, yscale=-4.5*\UNIT]
\begin{scope}
        \LatticeHK 
\end{scope} 
\begin{scope}[every node/.style={fill=white}]
        \node (a) at (0) {$\emptyset$};
        \node (b) at (11) {\begin{tikzpicture}\youngDiagram{{\scriptsize{0}}}{.35};\end{tikzpicture}};
        \node (c) at (21) {\begin{tikzpicture}\youngDiagram{{\scriptsize{0},\scriptsize{\color{red}{1}}}}{.35};\end{tikzpicture}};
        \node (d) at (31) {\begin{tikzpicture}\youngDiagram{{\scriptsize{0},\scriptsize{\color{red}{1}},\scriptsize{\color{blue}{2}}}}{.35};\end{tikzpicture}};
        \node (e) at (41) {\begin{tikzpicture}\youngDiagram{{\scriptsize{0},\scriptsize{\color{red}{1}},\scriptsize{\color{blue}{2}},\scriptsize{0}}}{.35};\end{tikzpicture}};
        \node (g) at (22) {\begin{tikzpicture}\youngDiagram{{\scriptsize{0}},{\scriptsize{\color{blue}{2}}}}{.35};\end{tikzpicture}};
        \node (i) at (32) {\begin{tikzpicture}\youngDiagram{{\scriptsize{0},\scriptsize{\color{red}{1}}},{\scriptsize{\color{blue}{2}}}}{.35};\end{tikzpicture}};
        \node (k) at (42) {\begin{tikzpicture}\youngDiagram{{\scriptsize{0},\scriptsize{\color{red}{1}},\scriptsize{\color{blue}{2}}},{\scriptsize{\color{blue}{2}}}}{.35};\end{tikzpicture}};
        \node (l) at (43) {\begin{tikzpicture}\youngDiagram{{\scriptsize{0},\scriptsize{\color{red}{1}}},{\scriptsize{\color{blue}{2}}},{\scriptsize{\color{red}{1}}}}{.35};\end{tikzpicture}};
        \node (m) at (44) {\begin{tikzpicture}\youngDiagram{{\scriptsize{0},\scriptsize{\color{red}{1}}},{\scriptsize{\color{blue}{2}},\scriptsize{0}}}{.35};\end{tikzpicture}};
\end{scope}
\begin{scope}[black, thick,->]
\draw (a)--(b) node[midway,left] {0};
\draw[red] (b)--(c) node[midway,left] {\color{red}{1}};
\draw (d)--(e) node[midway,left] {0};
\draw[blue] (b)--(g) node[midway,left] {\color{blue}{2}};
\draw[red] (g)--(i) node[midway,left] {\color{red}{1}};
\draw[blue] (c)--(d) node[midway,left] {\color{blue}{2}};
\draw[blue] (d)--(k) node[midway,left] {\color{blue}{2}};
 \draw[red] (i)--(l) node[midway,left] {\color{red}{1}};
\draw (i)--(m) node[midway,left] {0};
\end{scope}
\end{tikzpicture}};

\node at (440,0) {\begin{tikzpicture}[xscale=3*\UNIT, yscale=-4.5*\UNIT]
\begin{scope}
        \LatticeHKWider 
\end{scope} 
\begin{scope}[every node/.style={fill=white}]
        \node (a) at (0) {\begin{tikzpicture} 
                                   \node at (0,0.1) {\scriptsize{$\otimes$}}; 
                                   \node at (.33,0.1) {$\emptyset$};
                                   \node (z) at (-.58,.1) {\scriptsize{\color{red}{1}}};
                                   \draw (z) ellipse (.35cm and .15cm);
                                   \node at (.85,0) {};
                                   \end{tikzpicture}};
        \node (b) at (11) {\begin{tikzpicture}
                                  \node at (0,0.1) {\scriptsize{$\otimes$}};
                                   \node at (.33,0.1) {$\emptyset$};
                                   \node (z) at (-.6,.1) {\scriptsize{0}};
                                   \draw (z) ellipse (.35cm and .15cm);
                                   \node at (.85,0) {};
                                   \end{tikzpicture}};
        \node (c) at (21) {\begin{tikzpicture}
        				\node at (0,0.1) {\scriptsize{$\otimes$}};
                                 \node at (.4,0.1) {\begin{tikzpicture}\youngDiagram{{\scriptsize{\color{red}{1}}}}{.35};\end{tikzpicture}};                              
                                 \node (z) at (-.55,.1) {\scriptsize{0}};
                                   \draw (z) ellipse (.35cm and .15cm);
                                   \node at (.85,0) {};
                                   \end{tikzpicture}};
        \node (d) at (31) {\begin{tikzpicture}
                                  \node at (0,0.1) {\scriptsize{$\otimes$}};
                                  \node at (.57,0.1) {\begin{tikzpicture}\youngDiagram{{\scriptsize{\color{red}{1}},\scriptsize{\color{blue}{2}}}}{.35}\end{tikzpicture}};
                                  \node (z) at (-.55,.1) {\scriptsize{0}};
                                   \draw (z) ellipse (.35cm and .15cm);
                                   \node at (1.5,0) {};
                                   \end{tikzpicture}};
        \node (e) at (41) {\begin{tikzpicture}
                                   \node at (0,0.1) {\scriptsize{$\otimes$}};
                                    \node at (.75,0.1) {\begin{tikzpicture}\youngDiagram{{\scriptsize{\color{red}{1}},\scriptsize{\color{blue}{2}},\scriptsize{0}}}{.35};\end{tikzpicture}};
                                   \node (z) at (-.55,.1) {\scriptsize{0}};
                                   \draw (z) ellipse (.35cm and .15cm);
                                   \end{tikzpicture}};
        \node (g) at (22) {\begin{tikzpicture}
                                 \node at (0,0.1) {\scriptsize{$\otimes$}};
                                 \node at (.3,0.1) {$\emptyset$};
                                \node (z) at (-.55,.1) {\scriptsize{\color{blue}{2}}};
                                 \draw (z) ellipse (.35cm and .15cm);
                                 \node at (.85,0) {};
                                  \end{tikzpicture}};
        \node (i) at (32) {\begin{tikzpicture}
                                \node at (0,0.1) {\scriptsize{$\otimes$}};
                                \node at (.39,0.1) {\begin{tikzpicture}\youngDiagram{{\scriptsize{\color{red}{1}}}}{.35};\end{tikzpicture}};
                                \node (z) at (-.55,.1) {\scriptsize{\color{blue}{2}}};
                                  \draw (z) ellipse (.35cm and .15cm);
                                  \node at (.85,0) {};
                                  \end{tikzpicture}};
        \node (k) at (42) {\begin{tikzpicture}
                                \node at (0,0.1) {\scriptsize{$\otimes$}};
                                \node at (.58,0.1)  {\begin{tikzpicture}\youngDiagram{{\scriptsize{\color{red}{1}},\scriptsize{\color{blue}{2}}}}{.35};\end{tikzpicture}};
                                \node (z) at (-.55,.1) {\scriptsize{\color{blue}{2}}};
                                  \draw (z) ellipse (.35cm and .15cm);
                                   \end{tikzpicture}};
        \node (l) at (43) {\begin{tikzpicture}
                                  \node at (0,0.1) {\scriptsize{$\otimes$}};
                                \node at (.39,0.1) {\begin{tikzpicture}\youngDiagram{{\scriptsize{\color{red}{1}}}}{.35};\end{tikzpicture}};
                                \node (z) at (-.55,.1) {\scriptsize{\color{red}{1}}};
                                   \draw (z) ellipse (.35cm and .15cm);
                                   \end{tikzpicture}};
        \node (m) at (44) {\begin{tikzpicture}
                                \node at (.35,0.1) {};
                                \node at (0,0.1){\scriptsize{$\otimes$}};
                                \node at (.4,0.1) {\begin{tikzpicture}\youngDiagram{{\scriptsize{\color{red}{1}}},{\scriptsize{0}}}{.35};\end{tikzpicture}};
                                \node (z) at (-.55,.1) {\scriptsize{\color{blue}{2}}};
                                   \draw (z) ellipse (.35cm and .15cm);
                                   \end{tikzpicture}};
\end{scope}
\begin{scope}[black,thick,->]
\draw (a)--(b) node[midway,left] {\scriptsize{0}};
 \draw[red] (b)--(c) node[midway,left] {\scriptsize{\color{red}{1}}};
\draw (d)--(e) node[midway,left] {\scriptsize{0}};
\draw[blue] (b)--(g) node[midway,left] {\scriptsize{\color{blue}{2}}};
\draw[red] (g)--(i) node[midway,left] {\scriptsize{\color{red}{1}}};
\draw[blue] (c)--(d) node[midway,left] {\scriptsize{\color{blue}{2}}};
\draw[blue] (d)--(k) node[midway,left] {\scriptsize{\color{blue}{2}}};  \draw[red] (i)--(l) node[midway,left] {\scriptsize{\color{red}{1}}};
\draw (i)--(m) node[midway,left] {\scriptsize{0}};
\end{scope}
\end{tikzpicture}};


\end{tikzpicture}
\caption{ \label{fig-crystal-isom-opp} 
The isomorphism $B(\Lambda_0) \simeq \Bopp \otimes B(\Lambda_1)$
for $\ell = 3$.
}
\end{center}
\end{figure}


\section{Definition of the KLR algebra $R(\nu)$ and some functors}
\label{sec-KLR}
 
In what follows we let $[k]$ be the quantum integer in the
indeterminant $q$,
\begin{equation}
[k] = q^{k-1} + q^{k-3} + \dots + q^{1-k} \quad \text{and} \quad [k]! = [k] [k-1] \dots [1].
\end{equation}
For $\nu = \sum_{i \in I} \nu_i \alpha_i$ in $Q^+$ with $|\nu| = m$, we define $\seq(\nu)$ to be all sequences 
\begin{equation*}
\und{i} = (i_{1}, i_{2}, \dots, i_{m})
\end{equation*}
such that $i_k$ appears $\nu_k$ times. For $\und{i} \in \seq(\nu)$ and $\und{j} \in \seq(\mu)$, $\und{ij}$, will denote the concatenation of the two sequences unless otherwise specified. It follows that $\und{ij} \in \seq(\nu + \mu)$. We write 
\begin{equation}
i^n = (\underbrace{i,i, \dots ,i}_{n}).
\end{equation}
There is a left  action of the symmetric group, $\Sy{m}$, on $\seq(\nu)$ defined by,
\begin{equation}
s_{k}(\und{i}) = s_k\Big( i_{1}, i_{2}, \dots ,i_{k}, i_{k+1}, \dots ,i_m\Big) = (i_{1}, i_{2}, \dots, i_{k+1}, i_{k}, \dots, i_{m})
\end{equation}
where $s_k$ is the adjacent transposition in $\Sy{m}$ that interchanges $k$ and $k+1$. 

Since this paper only considers KLR algebras of type
$A^{(1)}_{\ell-1}$, we simplify the definition below from that for
general type. The definition for arbitrary symmetrizable types can be
found in \cite{KL09}, \cite{KL11}, and \cite{Rou08}.
Using the more general definition with Rouquier's parameters
$Q_{i,j}(u,v)$ will not change the results or proofs in this paper, as
they concern crystal-theoretic phenomena.
 There is also a
diagrammatic presentation of KLR algebras which can be found in
\cite{KL09}, \cite{KL11}. By results of Brundan-Kleshchev \cite{BK09a},
\cite{BK09b}, there is an isomorphism between $R^{\Lambda}(\nu)$ and
$H^{\Lambda}_{\nu}$ where $H^{\Lambda}_\nu$ is a block of the
cyclotomic Hecke algebra $H^{\Lambda}_m$ as defined in
\cite{AK94, BM94, Che87}. Hence readers unfamiliar with KLR algebras
can translate all statements and proofs in terms of Hecke algebras
throughout the paper. We remark that
historically, this is the original setting in
which the theorems from this paper were proved \cite{V03, Vnotes}.
In fact the
reader can think of all results as being stated for $\MB{F}_\ell
\Sy{m}$ in the case that $\ell$ is prime if the other algebras are
not familiar.

For $\nu \in Q^+$ with $|\nu| = m$, the {\emph{KLR algebra}}
$R(\nu)$ is the associative, graded, unital $\MB{C}$-algebra generated
by \begin{equation}
1_{\und{i}} \;\; \text{for} \;\; \und{i} \in \seq(\nu),
\quad x_r \;\; \text{for} \;\; 1 \leq r \leq m,
\quad  \psi_r \;\; \text{for} \;\; 1 \leq r \leq m-1,
\end{equation}
subject to the following relations, where $\und{i}, \und{j} \in
\seq(\nu)$ and equality between $i_r$ and $i_t$ is taken to mean
equality in $\MB{Z}/\ell \MB{Z}$.
\begin{equation}
\label{eq-idem}
1_{\und{i}}1_{\und{j}} = \delta_{\und{i},\und{j}}1_{\und{i}}, \quad\quad x_r1_{\und{i}} = 1_{\und{i}}x_r, \quad\quad \psi_r 1_{\und{i}} = 1_{s_r(\und{i})}\psi_r,  \quad\quad x_rx_t = x_tx_r,
\end{equation}
\begin{align}
 \psi_r\psi_t &= \psi_t \psi_r \qquad   \text{if} \; |r-t| > 1,  \\
\label{square-relation} \psi_r\psi_r1_{\und{i}} &= 
   \begin{cases}
    0 & \text{if $i_r = i_{r+1}$}\\
    (x_r^{-a_{i_r i_{r+1}}}+ x_{r+1}^{-a_{i_{r+1}i_r}})1_{\und{i}} & 
		\text{if $i_r = i_{r+1} \pm 1$}\\
    1_{\und{i}} & \quad \quad \text{otherwise,}
  \end{cases} 
\\
  \label{braid-relation}
(\psi_r \psi_{r+1} \psi_r - \psi_{r+1}\psi_r \psi_{r+1})1_{\und{i}}
 &=  
   \begin{cases}
    1_{\und{i}}  & \text{if $\ell > 2$ and $i_r = i_{r+2} = i_{r+1} \pm 1$}\\
    (x_r + x_{r+2})1_{\und{i}} &  \text{if $\ell =2$ and
$i_r = i_{r+2} = i_{r+1} \pm 1$}\\
    0 & \text{otherwise,}
  \end{cases}    \\ 
 \label{dot-past-crossing} (\psi_rx_t - x_{s_r(t)}\psi_r)1_{\und{i}}&=
   \begin{cases}
    1_{\und{i}} & \quad \quad \text{if $t = r$ and $i_r = i_{r+1}$}\\
    -1_{\und{i}} & \quad \quad \text{if $t = r+1$ and $i_r = i_{r+1}$}\\
    0 & \quad \quad \text{otherwise.}
  \end{cases} 
\end{align}
The elements $1_{\und{i}}$ are idempotents in $R(\nu)$
by \eqref{eq-idem}
and the identity element is given by
\begin{equation}
1_\nu = \sum_{\und{i} \in \seq(\nu)} 1_{\und{i}}.
\end{equation}
Thus, as a vector space $R(\nu)$ decomposes as, 
\begin{equation}
R(\nu) = \bigoplus_{\und{i},\und{j} \in \seq(\nu)} 1_{\und{i}} R(\nu) 1_{\und{j}}.
\end{equation}
The generators of $R(\nu)$ are graded as, 
\begin{equation}
\deg(1_{\und{i}}) = 0, \;\;\; \deg(x_r1_{\und{i}}) = 2, \;\;\; \deg(\psi_r1_{\und{i}}) = -(\alpha_{i_r}, \alpha_{i_{r+1}}).
\end{equation}
We define
\begin{equation}
R = \bigoplus_{\nu \in Q^+}R(\nu).
\end{equation}
Notice that while $R(\nu)$ is unital, $R$ is not. 

For each $w \in \Sy{m}$ we fix once and for all a reduced expression
\begin{equation}
\wh{w} = s_{i_1}s_{i_2} \dots s_{i_t}.
\end{equation} 
Observe that the $s_k$ are Coxeter generators of $\Sy{m}$, and $t$ is
the Coxeter length of $w$. Let $\psi_{\wh{w}} = \psi_{i_1} \psi_{i_2}
\dots \psi_{i_t}$ correspond to the chosen reduced expression $\wh{w}$.
For $\und{i},\und{j} \in \seq(\nu)$, let $_{\und{j}}\Sy{\und{i}}$ be
the permutations in  $\Sy{m}$ that take $\und{i}$ to $\und{j}$. 

\begin{theorem} \label{basis-theorem} \cite[Theorem 2.5]{KL09} 
As a $\mathbb{C}$-vector space $1_{\und{j}}R(\nu)1_{\und{i}}$ has basis,
\begin{equation}
\{\psi_{\wh{w}} x_1^{b_1} \dots x_{m}^{b_m} 1_{\und{i}} \; | \; w \in {}_{\und{j}}{\Sy{}}_{\und{i}}, \; b_r \in \mathbb{Z}_{\geq 0}\}.
\end{equation}
\end{theorem}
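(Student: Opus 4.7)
This is the standard PBW-type basis theorem for KLR algebras, quoted here from \cite{KL09}. My plan has the familiar two halves: show the proposed set spans $1_{\und{j}} R(\nu) 1_{\und{i}}$, then show it is linearly independent via a faithful polynomial representation.

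For spanning, I would induct on word length in the generators. By \eqref{eq-idem} every element can be written as a $\C$-linear combination of products of generators each bracketed by idempotents $1_{\und{k}}$; the idempotent relation forces nonzero products $1_{\und{j}} (\cdots) 1_{\und{i}}$ to correspond to permutations $w \in {}_{\und{j}}\Sy{}_{\und{i}}$. Using \eqref{dot-past-crossing} I push all $x$-generators to the right of all $\psi$-generators, at the cost of lower-order terms (fewer $\psi$'s), because each commutation either moves an $x$ past a $\psi$ cleanly or produces a term $\pm 1_{\und{i}}$. Next, using the commutation $\psi_r \psi_t = \psi_t \psi_r$ for $|r-t|>1$, the braid relation \eqref{braid-relation}, and the square relation \eqref{square-relation}, any word $\psi_{i_1}\cdots \psi_{i_t} 1_{\und{i}}$ representing $w$ can be rewritten as $\psi_{\wh{w}} 1_{\und{i}}$ plus a sum of terms $\psi_{\wh{u}} p(x) 1_{\und{i}}$ with $\ell(u) < \ell(w)$ (the extra $p(x)$'s arise from the polynomial right-hand sides in \eqref{square-relation} and \eqref{braid-relation}, which is exactly why an induction on length is needed rather than a single normal-form rewriting). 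This produces the claimed spanning set.

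For linear independence, I would construct the polynomial ("faithful") representation. Let $V = \bigoplus_{\und{i} \in \seq(\nu)} \C[y_1,\dots,y_m] 1_{\und{i}}$, let $1_{\und{j}}$ act as projection onto its summand, let $x_r$ act as multiplication by $y_r$, and let $\psi_r$ act by the divided-difference style operator
\[
\psi_r \cdot f 1_{\und{i}} =
\begin{cases}
\frac{s_r(f) - f}{y_r - y_{r+1}}\, 1_{\und{i}} & \text{if } i_r = i_{r+1},\\
(y_r - y_{r+1})^{-a_{i_r i_{r+1}}} s_r(f)\, 1_{s_r \und{i}} & \text{if } i_r \neq i_{r+1} \text{ (with correct sign and exponent)},\\
s_r(f) 1_{s_r \und{i}} & \text{otherwise,}
\end{cases}
\]
adjusted to match the conventions of \eqref{square-relation}--\eqref{dot-past-crossing}; in type $A^{(1)}_{\ell-1}$ the adjacency cases are $i_r = i_{r+1} \pm 1$, and the $\ell = 2$ case needs slightly different bookkeeping because of the $-2$ Cartan entries. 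After checking all relations hold (the serious computation is the braid relation for three adjacent indices, including the $\ell=2$ case), one applies the spanning elements to the vectors $1 \cdot 1_{\und{i}}$ and observes that the resulting elements of $V$ live in distinct $P_{\und{j}}$ summands (parametrized by $w \in {}_{\und{j}}\Sy{}_{\und{i}}$) and, within each summand, are polynomials whose leading monomials are distinct (governed by $y_1^{b_1}\cdots y_m^{b_m}$ multiplied by the ``leading'' polynomial coming from $\psi_{\wh{w}}$, analogously to the nil-Hecke argument).

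The main obstacle I expect is twofold. First, verifying the braid relation \eqref{braid-relation} in the polynomial representation requires a delicate case analysis, particularly at $\ell = 2$, where the presence of the dots on the right-hand side forces the divided-difference action to be modified. Second, extracting the leading terms cleanly enough to deduce linear independence from the action on $V$ requires choosing the reduced expression $\wh{w}$ consistently and then tracking how $\psi_{\wh{w}}$ acts on $1_{\und{i}}$; the standard trick is to note that the leading $y$-monomial of $\psi_{\wh{w}}\cdot 1_{\und{i}}$ depends only on $w$, $\und{i}$, $\und{j}$ (not on the reduced expression) up to a nonzero scalar, which then lets the multiplication by $x_1^{b_1}\cdots x_m^{b_m}$ pick out distinct leading monomials and thereby force linear independence.
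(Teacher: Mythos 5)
The paper does not prove this theorem; it simply cites it as \cite[Theorem~2.5]{KL09}, so there is no in-paper argument to compare against. Your two-step outline (spanning by rewriting into normal form modulo lower-length terms; linear independence via the polynomial representation and a leading-monomial argument) is indeed the standard proof strategy from \cite{KL09} and is correct in outline, though one small correction: the basis elements $\psi_{\wh{w}} x_1^{b_1}\cdots x_m^{b_m} 1_{\und{i}}$ applied to $1\cdot 1_{\und{i}}$ all land in the \emph{same} summand $\C[y_1,\dots,y_m]\,1_{\und{j}}$ of $V$ (since $w(\und{i})=\und{j}$ for every $w\in{}_{\und{j}}\Sy{}_{\und{i}}$), so the separation has to come entirely from the leading-monomial argument rather than from living in different summands; and your formula for $\psi_r$ in the adjacent case needs the orientation/asymmetry built in so that $\psi_r^2$ reproduces the right-hand side of \eqref{square-relation} (your version would give $-(y_r-y_{r+1})^2$ rather than $x_r+x_{r+1}$ for $\ell>2$), as you partly acknowledge.
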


It is known that all simple $R(\nu)$-modules are finite dimensional \cite{KL09}. For this reason, in this paper we only consider the category of finite-dimensional KLR-modules $R(\nu) \Mod$ and $R \Mod$.

We often refer to $1_{\und{i}}M$ as the $\und{i}$-{\em weight} space
of $M$ and any $0 \neq v \in 1_{\und{i}}M$ as a {\em weight vector}.
A {\em weight basis} is a basis consisting of weight vectors.

We define the graded character of an $R(\nu)$-module to be
\begin{equation}
\Char(M) = \sum_{\und{i} \in \seq(\nu)} \text{gdim}(1_{\und{i}}M) \cdot [\und{i}].
\end{equation}
Here $\text{gdim}(1_{\und{i}}M)$ is an element of
$\MB{Z}[q,q^{-1}]$, and hence $\Char(M)$ is an element of the free
$\MB{Z}[q,q^{-1}]$-module generated by all $[\und{i}]$ for $\und{i} \in
\seq(\nu)$. We will let $\ch(M)$ denote the multiset that is the
support of $\Char(M)$ so that 
\begin{equation}
\Char(M)|_{q=1} = \sum_{[\und{i}] \in \ch(M)} [\und{i}].
\end{equation}
Our notational convention
is to write $\und{i} \in \seq(\nu)$ but write $[\und{i}] \in
\ch(M)$. Since characters are an important combinatorial tool, it is
worthwhile to set a special notation for them.

Because $R$ is a graded algebra, we will only work with homomorphisms between $R$-modules that are either degree preserving or degree homogeneous. We denote the $\mathbb{C}$-vector space of degree preserving homomorphisms between $R(\nu)$-modules $M$ and $N$ by $\Hom(M,N)$. Since any homogeneous homomorphism can be interpreted as degree preserving by shifting the grading on our target or source module, then we can write the $\mathbb{C}$-vector space of homogeneous homomorphisms between $M$ and $N$, $\HOM(M,N)$, by
\begin{equation}
\HOM(M,N) = \bigoplus_{k \in \mathbb{Z}} \Hom(M,N\{k\}).
\end{equation}
While the grading is important, it was shown in \cite{KL09} that there is a unique grading on a simple $R$-module up to overall grading shift. Since this paper concerns simple modules, we will rarely use or discuss the grading.
All isomorphisms between modules will be taken up to overall
grading shift.

\begin{remark} \label{x-nilpotent}
Because $x_r1_{\und{i}} \in R(\nu)$ is always positively graded for $1 \leq r \leq |\nu|$ and $\und{i} \in \seq(\nu)$, then on a finite dimensional $R(\nu)$-module, $M$, $x_r1_{\und{i}}$ will always act nilpotently.
\end{remark}

\subsection{Trivial and sign modules}

For $k \in \N, i \in I$, define positive roots 
\begin{equation}\gammaplus{i}{k} := \alpha_i + \alpha_{i+1} + \dots + \alpha_{i+k-1} \quad \text{and} \quad \gammaminus{i}{k} := \alpha_i + \alpha_{i-1} + \dots + \alpha_{i-k+1}
\end{equation} 
of height $k$.
As in Remark \ref{rem-k} we interpret subscripts to be in $I$.
Because 1-dimensional modules will play a key role in our main
theorems below, we give the following classification.

\begin{proposition} \label{class-of-triv}
If $M$ is a 1-dimensional $R(\nu)$-module with
$|\nu| = m$, then $M$ has character
\begin{equation} \label{ascending-triv-form}
\Char(M) = [i, i+1, \dots,  i+m-2, i+m-1]
\end{equation}
or 
\begin{equation} \label{descending-triv-form}
\Char(M) = [i,i-1,\dots , i-m+2, i-m+1]
\end{equation}
and $\nu = \gammaplus{i}{m}$ or $\nu = \gammaminus{i}{m}$ respectively. The entries in $\Char(M)$ should be taken modulo $\ell$.
\end{proposition}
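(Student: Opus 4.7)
The plan is as follows. Since $M$ is one-dimensional and the $1_{\und{i}}$ form a complete system of orthogonal idempotents, exactly one idempotent $1_{\und{i}}$ acts as the identity on $M$ and the rest act as zero. So $\Char(M)=[\und{i}]$ for a single sequence $\und{i}=(i_1,\dots,i_m)\in\seq(\nu)$, and I just need to show $\und{i}$ has one of the two prescribed forms. The first reduction is that each $x_r$ and each $\psi_r$ acts as the zero endomorphism of $M$: the $x_r$ act nilpotently on a finite-dimensional module (Remark \ref{x-nilpotent}), so they must act as $0$ on a one-dimensional one; and $\psi_r\cdot 1_{\und{i}}=1_{s_r\und{i}}\psi_r$ lands in the weight space $1_{s_r\und{i}}M$, which vanishes whenever $s_r\und{i}\ne\und{i}$.

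Now I would rule out the two ``bad'' configurations for adjacent entries of $\und{i}$. First, if $i_r=i_{r+1}$, apply the dot-past-crossing relation \eqref{dot-past-crossing}: $(\psi_r x_r-x_{r+1}\psi_r)1_{\und{i}}=1_{\und{i}}$. On $M$ the left-hand side is $0$ (all three of $x_r,x_{r+1},\psi_r$ act by $0$), while the right-hand side is the identity, a contradiction. So $i_r\ne i_{r+1}$ for all $r$. Second, if $i_r$ and $i_{r+1}$ are non-adjacent in the Dynkin diagram (equivalently $a_{i_r i_{r+1}}=0$), the middle branch of the quadratic relation \eqref{square-relation} does not apply, and we are in the ``otherwise'' case giving $\psi_r^2 1_{\und{i}}=1_{\und{i}}$; again the left side is $0$ on $M$ but the right is the identity, a contradiction. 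Hence $i_{r+1}\equiv i_r\pm 1\pmod\ell$ for every $r$.

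It remains to show the signs are consistent, i.e.\ that the sequence is truly monotone. Suppose $\ell>2$ and that at some position $i_{r+2}=i_r$ (so that $\und{i}$ ``changes direction'' between positions $r$ and $r+2$). Then $i_r=i_{r+2}=i_{r+1}\pm1$ is exactly the hypothesis of the first case of the braid relation \eqref{braid-relation}, giving $(\psi_r\psi_{r+1}\psi_r-\psi_{r+1}\psi_r\psi_{r+1})1_{\und{i}}=1_{\und{i}}$. On $M$ the left side is $0$ since every $\psi_k$ acts as $0$, while the right side is the identity. This contradiction forces $i_{r+2}\ne i_r$, so once the first step $i_2=i_1\pm1$ is chosen, every subsequent step must proceed in the same direction. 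This yields the ascending form \eqref{ascending-triv-form} or the descending form \eqref{descending-triv-form}, and the corresponding computation of $\nu$ as $\gammaplus{i_1}{m}$ or $\gammaminus{i_1}{m}$ is then immediate. For $\ell=2$ the only sequences with $i_r\ne i_{r+1}$ are the alternating ones, and a direct check shows that $\gammaplus{i}{m}=\gammaminus{i}{m}$ and that the ascending and descending characters coincide, so the statement holds vacuously in that case.

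The main subtlety, and really the only place to be careful, is Step 3 (the braid argument): one must verify that the ``otherwise'' clause of \eqref{braid-relation} cannot secretly give more monotone sequences and that the $\ell=2$ braid relation, which produces $(x_r+x_{r+2})1_{\und{i}}$ rather than $1_{\und{i}}$, does not rule out the alternating sequence—both $x_r$ and $x_{r+2}$ already act as $0$, so the relation is satisfied trivially. Beyond that, the argument is a uniform ``apply each defining relation to a nonzero vector of $M$'' sweep.
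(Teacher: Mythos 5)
Your proof is correct and follows essentially the same structure as the paper's: use the dot-past-crossing relation to rule out $i_r = i_{r+1}$, the quadratic relation to rule out non-adjacent neighbors, and the braid relation (for $\ell>2$) to rule out direction reversals. One small imprecision to flag: in the $i_r = i_{r+1}$ step you justify the vanishing of the left-hand side by the parenthetical ``all three of $x_r,x_{r+1},\psi_r$ act by $0$,'' but the reason you gave earlier for $\psi_r$ acting by zero (that it lands in the $1_{s_r\und{i}}$-weight space, which vanishes when $s_r\und{i}\ne\und{i}$) does not apply when $i_r=i_{r+1}$ since then $s_r\und{i}=\und{i}$. The conclusion is still correct because $x_r,x_{r+1}$ acting by zero already kills both terms of $(\psi_r x_r - x_{r+1}\psi_r)1_{\und{i}}v$ (for the second term, $\psi_r 1_{\und{i}}v$ is a scalar multiple of $v$ which $x_{r+1}$ annihilates), and this is exactly how the paper presents that step. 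The rest of your argument, including the direct use of $\psi_r v=0$ once $i_r\ne i_{r+1}$ is established to simplify the quadratic-relation step, is a clean variant of the paper's parameterization $\psi_r 1_{\und{i}}v=a_r v$.
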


\begin{proof}
Our proof uses similar techniques to those used in \cite{KR10} for the
study of calibrated (or homogeneous) modules, of which the
1-dimensional $R(\nu)$-modules form a subset.
Let $M$ be spanned by the vector $v$,
and let $\und{i}$ be the unique element of $\seq(\nu)$ such that $1_{\und{i}}M \neq \zero$. We write
\begin{equation}
\und{i} = (i_1,i_2, \dots ,i_m).
\end{equation}
Recall $x_r1_{\und{i}}$, for $1 \leq r \leq m$ acts nilpotently on $M$ by Remark \ref{x-nilpotent}. Since $M$ is 1-dimensional, then $x_r1_{\und{i}}v = 0$.

Suppose $i_{r} = i_{r+1}$ (recall that we interpret $i_r, i_{r+1} \in \mathbb{Z}/\ell \mathbb{Z}$). Then from \\relation \eqref{dot-past-crossing},
\begin{equation}
(\psi_rx_r - x_{r+1}\psi_r)1_{\und{i}}v = 1_{\und{i}}v = v,
\end{equation}
but this is impossible as $x_r$ and $x_{r+1}$ both act as zero. Thus $i_r \neq i_{r+1}$.

Let $\psi_r1_{\und{i}}v = a_rv$ for some constant $a_r$. Given
we showed 
$i_r \neq i_{r+1}$, if additionally we suppose
 $i_{r+1} \neq \pm 1 + i_r$, then by relation
\eqref{square-relation}, \begin{equation}
a_r^2v = \psi_r^21_{\und{i}}v = v,
\end{equation}
so $a_r \neq 0$. Then,
$0 \neq \psi_r 1_{\und{i}}v = 1_{s_r(\und{i})} \psi_r v $
$ \in 1_{s_r(\und{i})}M$. 
But
$s_r(\und{i}) \neq \und{i}$
given $i_r \neq i_{r+1}$
which contradicts the fact that $M$ is 1-dimensional.
Hence $i_{r+1} = \pm 1 + i_r$ and \begin{equation}
a_r^2v = \psi_r^2 1_{\und{i}}v = (x_r + x_{r+1})1_{\und{i}}v = 0
\end{equation}
showing $a_r = 0$. Thus $\psi_r1_{\und{i}}v = 0$ for all $r$.

In the case when $\ell = 2$, $i_{r+1} = \pm 1 + i_r$ fully determines
$\und{i}$ and agrees with the conclusions of the proposition,
 so for the rest of the proof we assume $\ell > 2$. Suppose
$i_r = i_{r+2}$ for some $1 \leq r \leq m-2$. Since $i_{r+1} = \pm 1 +
i_r = \pm 1 + i_{r+2}$, relation \eqref{braid-relation} gives
\begin{equation} \label{braid-on-1-dim}
0 = (a_r^2a_{r+1} - a_ra^2_{r+1})1_{\und{i}}v = (\psi_r\psi_{r+1}\psi_r - \psi_{r+1}\psi_r\psi_{r+1})1_{\und{i}}v = 1_{\und{i}}v = v
\end{equation}
which is a contradiction as $a_r=0$ but $v \neq 0$.
So $i_r \neq i_{r+2}$ and $\und{i}$ has form
\eqref{ascending-triv-form} or \eqref{descending-triv-form}. In
particular $\nu = \gamma^{\pm}_{i;m}$. To show that such $M$ actually
exist one need only check that setting $1_{\und{i}}v = v$ for $\und{i}$
as in \eqref{ascending-triv-form} or \eqref{descending-triv-form}, and
$x_rv = \psi_rv = 1_{\und{j}}v = 0$ for $\und{j} \neq \und{i}$,
satisfies all of
the relations on the generators of
$R(\gamma^{\pm}_{i;m})$.

\end{proof}

When $k > 0$, we denote the 1-dimensional $R(\gammaplus{i}{k})$-module $T$ with ascending character,
\begin{equation}
\Char \Ti = [i, i+1, \dots ,i + k-1] \quad \text{as} \quad \Ti =
\Tii{i}{k} = \T{i, i+1, \dots ,i + k-1},
\end{equation}
and the 1-dimensional $R(\gammaminus{i}{k})$-module $S$ with descending character,
\begin{equation}
\Char S = [i, i-1, \dots, i-k+1] \quad \text{as} \quad S = \Sii{i}{k} = \Si{i,i-1, \dots, i-k+1}.
\end{equation}
We refer to $k$ as the {\emph{height}} of $\Tii{i}{k}$ and $\Sii{i}{k}$
respectively. When $k = 0$, then $\gamma^{\pm}_{i;k} = 0$ and
$\Tii{i}{k} = \UnitModule$, the unique simple $R(0)$-module, which we
will refer to as the \emph{unit} module.
In this paper we choose to work with 1-dimensional modules with
ascending character.
(These are analogous to trivial modules for the
affine Hecke algebra or symmetric group.) One could also have chosen to
use the 1-dimensional modules with descending character (analogous to
sign modules) with obvious modifications.  Hence we will informally
refer to each type of module as a trivial or sign module, respectively.
\begin{example}
\label{exl-onedim}
In type $A^{(1)}_3$, with
\begin{equation*}
\nu = 2\alpha_0 + 2\alpha_1 + \alpha_2 + \alpha_3,
\end{equation*}
of height $6$,
$R(\nu)$ has a simple 1-dimensional module $T = \Tii{0}{6}$ with
\begin{equation*}
\Char(T) = [0,1,2,3,0,1],
\end{equation*}
but it is convenient to also write this as
\begin{equation*}
\Char(T) = [0,1,2,3,4,5].
\end{equation*}
\end{example}



\subsection{Induction  and restriction}
\label{sec-ind}

It was shown in \cite{KL09} and \cite{KL11} that for $\nu, \mu \in Q^+$ there is a non-unital embedding
\begin{equation}
R(\nu) \otimes R(\mu) \hookrightarrow R(\nu + \mu).
\end{equation}
This map sends the idempotent $1_{\und{i}} \otimes 1_{\und{j}}$ to $1_{\und{ij}}$. The identity $1_{\nu} \otimes 1_{\mu}$ of $R(\nu) \otimes R(\mu)$ has as its image
\begin{equation}
\sum_{\und{i} \in \seq(\nu)} \sum_{\und{j} \in \seq(\mu)} 1_{\und{ij}}.
\end{equation}
Using this embedding one can define induction and restriction functors, 
\begin{align}
\ind_{\nu, \mu}^{\nu + \mu}: (R(\nu) \otimes R(\mu)) \Mod & \rightarrow R(\nu + \mu) \Mod\\
\notag
M & \mapsto R(\nu + \mu) \otimes_{R(\nu) \otimes R(\mu)} M
\end{align}
and
\begin{equation}
\res_{\nu, \mu}^{\nu + \mu}: R(\nu + \mu) \Mod \rightarrow (R(\nu) \otimes R(\mu)) \Mod.
\end{equation}
In the future we will write $\ind_{\nu,\mu}^{\nu+\mu} = \ind$ and $\res_{\nu,\mu}^{\nu+\mu} = \res$ when the algebras are understood from the context. More generally we can extend this embedding to finite tensor products 
\begin{equation}
R(\nu^{(1)}) \otimes R(\nu^{(2)}) \otimes \dots \otimes R(\nu^{(k)}) \hookrightarrow R(\nu^{(1)} + \nu^{(2)} + \dots + \nu^{(k)}).
\end{equation}
We refer to the image of this embedding as a \emph{parabolic subalgebra} and denote it by $R(\und{\nu}) \subset R(\nu^{(1)} + \dots + \nu^{(k)})$. We denote the image of the identity under this embedding as $1_{\und{\nu}}$.
It follows from Theorem \ref{basis-theorem} that $R(\nu^{(1)} +
\nu^{(2)} + \dots + \nu^{(k)})1_{\und{\nu}}$ is a free right
$R(\und{\nu})$-module and
$1_{\und{\nu}}R(\nu^{(1)} + \nu^{(2)} + \dots + \nu^{(k)})$
is a free left $R(\und{\nu})$-module.
 Let $m_i = |\nu^{(i)}|$ and set
\begin{equation}
\label{eq-parabolic}
P = (m_1, \dots, m_k) \quad \text{and} \quad \Sy{P} = \Sy{m_1} \times
\Sy{m_1} \times \dots \times \Sy{m_k}.
 \end{equation}
Let $\Sy{m_1 + \dots + m_k} / \Sy{P}$ be the collection of minimal
length left coset representatives of $\Sy{P}$ in $\Sy{m_1 + \dots +
m_k}$ and $\Sy{P}\backslash \Sy{m_1 + \dots + m_k}$ be the collection
of minimal length right coset representatives of $\Sy{P}$ in $\Sy{m_1 +
\dots + m_k}$.
We construct a weight basis for an induced module as follows. If $M$ is
an $R(\und{\nu})$-module with weight basis $U$ then
$\ind_{\und{\nu}}^{\nu^{(1)} + \dots + \nu^{(k)}} M$ has weight basis
\begin{equation} \label{ind-basis}
\{\psi_{\wh{w}} \otimes u  \; | \; w \in \Sy{m_1 + \dots + m_k} /
\Sy{P}, u \in U \}. 
\end{equation}
Induction is left adjoint to restriction (a property known as
Frobenius reciprocity),
\begin{equation}
\HOM_{R(\nu^{(1)} + \dots + \nu^{(k)})}(\ind_{\und{\nu}}^{\nu^{(1)} + \dots + \nu^{(k)}} M, N) \cong \HOM_{R(\und{\nu})}(M,\res_{\und{\nu}}^{\nu^{(1)} + \dots + \nu^{(k)}}N).
\end{equation}

Given $\und{i} \in \seq(\nu)$ and $\und{j} \in \seq(\mu)$, a shuffle of
$\und{i}$ and $\und{j}$ is an element $\und{k}$ of $\seq(\nu + \mu)$
such that $\und{k}$ has $\und{i}$ as a subsequence and $\und{j}$ as the
complementary subsequence.
We denote by $\und{i} \shuffle
\und{j}$ the formal sum of all shuffles of $\und{i}$ and $\und{j}$.
The multi-set of all shuffles of $\und{i}$ and
$\und{j}$ are in bijection with the minimal length left coset
representatives  $\Sy{|\nu| + |\mu|}/\Sy{|\nu|} \times \Sy{|\mu|}$.
Using the definition of degree from KLR algebras, we can associate to
any shuffle a degree which we denote as
$\deg(\und{i},\und{j},\und{k})$.
Then the quantum shuffle of $\und{i}$ and $\und{j}$ is
\begin{equation}
\und{i} \Shuffle \und{j} =
\sum_{\sigma \in \Sy{|\nu| + |\mu|}/\Sy{|\nu|} \times \Sy{|\mu|}}
q^{\deg(\und{i},\und{j},\sigma(\und{ij}))} \sigma(\und{ij}),
\end{equation}
so that $\und{i} \shuffle \und{j} = (\und{i} \Shuffle
\und{j})|_{q=1}$. Note that we will usually shuffle characters,
hence we also write $[\und{i}] \Shuffle [\und{j}]$. For an
$R(\mu)$-module $M$ and $R(\nu)$-module $N$ it was shown in
\cite{KL09} that \begin{equation}
\Char(\ind_{\mu,\nu}^{\mu + \nu} M \boxtimes N) = \Char(M) \Shuffle \Char(N).
\end{equation}
This identity is referred to as the {\bf Shuffle Lemma}.


\subsection{Simple modules of $R(n\alpha_i)$}
\label{R(kalpha)-modules}

For $\nu = n\alpha_i$, induction allows for a particularly easy
description of all simple $R(n\alpha_i)$-modules. Let $\Lii{i}$ be
the 1-dimensional $R(\alpha_i)$-module.
(Note $x_1 1_{i}$ acts as zero.)
Then the unique simple $R(n\alpha_i)$ module is 
\begin{equation}
\Lii{i^n} := \ind_{\alpha_i, \alpha_i,  \dots, \alpha_i}^{n\alpha_i} \Lii{i} \boxtimes \dots \boxtimes \Lii{i}
\end{equation}
up to overall grading shift, which we may shift to have character
\begin{equation}
\Char(\Lii{i^n}) = [n]! [i,i,\dots, i].
\end{equation}


\subsection{Crystal operators on the category ${R}\Mod$}

In the previous section we defined induction and restriction for KLR
algebras. Following the work of Grojnowski \cite{Gro99} where crystal
operators were developed as functors on the category of modules
over affine Hecke algebras of type $A$ (or \cite{Klesh} for
$\MB{F}_\ell \Sy{m}$), the KLR analogues of crystal operators were
introduced in \cite{KL09}, and further developed in \cite{LV11},
\cite{KK12}. For each $i \in I$, if $M \in R(\nu)\Mod$ 
and  $\nu -\alpha_i \in Q^+$,
 define the functor $\Delta_i:
R(\nu)\Mod \rightarrow R(\nu - \alpha_i) \otimes R(\alpha_i) \Mod$ as
the restriction
\begin{equation}
\Delta_i M := \res^{\nu}_{\nu-\alpha_i,\alpha_i}M.
\end{equation}
Note that this is equivalent to multiplying $M$ by $1_{\nu-\alpha_i}
\otimes 1_{\alpha_i}$. It is also sometimes useful to think of this
functor as killing all weight spaces corresponding to elements of
$\seq(\nu)$ that do not end in $i$. 
If $\nu -\alpha_i \not\in Q^+$ then $\Delta_i M = \zero$.
 We similarly define
\begin{equation}
\Delta_{i^n} M := \res_{\nu-n\alpha_i,n\alpha_i}^\nu M.
\end{equation}
Next define the functor $\e{i}: R(\nu) \Mod \rightarrow R(\nu-\alpha_i) \Mod$ as the restriction,
\begin{equation}
\e{i}M := \res^{R(\nu-\alpha_i)\otimes R(\alpha_i)}_{R(\nu-\alpha_i)}  \Delta_i M
\end{equation}
When $M$ is simple, we can further refine this functor by setting
\begin{equation}
\etil{i}M := \soc \e{i}M.
\end{equation}
We measure how many times we can apply $\etil{i}$ to a simple module $M$ by
\begin{equation}
\ep{i}(M) := \max\{ n \geq 0 \; | \; (\etil{i})^n M \neq \zero \;\}.
\end{equation}
Let $\ftil{i}:R(\nu) \Mod \rightarrow R(\nu+\alpha_i) \Mod$ be defined by
\begin{equation}
\ftil{i}M := \cosoc \ind M \boxtimes L(i).
\end{equation}
We also set 
$\wt(M) = - \nu$ if $M \in R(\nu)\Mod$,
and $\p_i(M) = \ep{i}(M) -\langle h_i, \nu \rangle$.
This data is all part of a crystal datum that defines
the structure of the crystal graph $B(\infty)$ on the 
simple $R$-modules.  See Section \ref{sec-modulecrystalmodel}.

Some of the most important facts about $\e{i}, \etil, \ftil{i}$ 
stated in \cite{KL09} are given in the following proposition.

\begin{proposition} \label{crystal-op-facts} \mbox{}
Let $i \in I$, $\nu \in Q^+$, $n \in \Z_{>0}$.
\begin{enumerate}
\item
Let $M \in R(\nu)\Mod$. Then
\begin{equation*}
\Char(\Delta_{i^n}M) = \sum_{\und{j} \in \seq(\nu - n\alpha_i)} \text{gdim}(1_{\und{j}i^n}M) \cdot \und{j}i^n,
\end{equation*}

\item \label{ftil-and-ep} Let $N \in R(\nu) \Mod$ be irreducible and $M = \ind_{\nu,n\alpha_i}^{\nu + n\alpha_i}N \boxtimes \Lii{i^n}$. Let $\ep{} = \ep{i}(N).$ Then
\begin{enumerate}
\item \label{pull-off-i's} $\Delta_{i^{\ep{} +n}}M \cong (\etil{i})^\ep{}N \boxtimes \Lii{i^{\ep{}+n}}$.
\item \label{cosoc-is-simple} $\cosoc M$ is irreducible, and $\cosoc M \cong (\ftil{i})^n N$, $\Delta_{i^{\ep{}+n}}(\ftil{i})^n N \cong (\etil{i})^{\ep{}}N \boxtimes \Lii{i^{\ep{}+n}}$, and $\ep{i}((\ftil{i})^n N) = \ep{} + n$.
\item $(\ftil{i})^n N$ occurs with multiplicity one as a composition factor of $M$.
\item \label{all-comp-have-less-i} All other composition factors $K$ of $M$ have $\ep{i}(K) < \ep{}+n$.
\end{enumerate}

\item Let
$\und{\mu} = ({\mu_1} \alpha_i, \dots, {\mu_r} \alpha_i)$
with $\sum^r_{k=1} \mu_k = n$.
\begin{enumerate}
\omitt{\item The module $\Lii{i^n}$ over the algebra $R(n\alpha_i)$ is the only graded irreducible module, up to isomorphism.}
\item All composition factors of $\res_{\und{\mu}}^{n\alpha_i}\Lii{i^n}$ are isomorphic to $\Lii{i^{\mu_1}} \boxtimes \dots \boxtimes \Lii{i^{\mu_r}}$, and $\soc(\res_{\und{\mu}}^{n \alpha_i} \Lii{i^n})$ is irreducible.
\item $\etil{i} \Lii{i^n} \cong \Lii{i^{n-1}}.$
\end{enumerate}

\item \label{etil-is-only-special-compfactor} Let $M \in R(\nu) \Mod$ be irreducible with $\ep{i}(M) > 0$. Then $\etil{i}M = \soc(\e{i}M)$ is irreducible and $\ep{i}(\etil{i}M) = \ep{i}(M) - 1$. Furthermore if $K$ is a composition factor of $\e{i}M$ and $K \not\cong \etil{i}M$,
then $\ep{i}(K) < \ep{i}(M) -1$.

\item \label{e-etil-relationship}
For irreducible $M \in R(\nu) \Mod$ let $m = \ep{i}(M)$. Then
$\e{i}^m M$ is isomorphic to $(\etil{i})^m M^{\oplus [m]!}$.
In particular, if $m =1$ then $\e{i}M = \etil{i}M$.

\item \label{e-and-f-undo-eachother} For irreducible modules $N \in R(\nu) \Mod$ and $M \in R(\nu+\alpha_i) \Mod$ we have $\ftil{i}N \cong M$ if and only if $N \cong \etil{i}M$.

\item Let $M, N \in R(\nu) \Mod$ be irreducible. Then $\ftil{i}M \cong \ftil{i}N$ if and only if $M \cong N$. Assuming $\ep{i}(M), \ep{i}(N) > 0$, $\etil{i}M \cong \etil{i}N$ if and only if $M \cong N$.

\end{enumerate}

\end{proposition}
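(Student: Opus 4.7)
These are standard facts from \cite{KL09,Gro99}. I would organize the proof around three tools: the Shuffle Lemma, Frobenius reciprocity, and a Mackey-style analysis of restriction after induction.

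Part (1) is immediate from the definition of $\Delta_{i^n}$ as $(1_{\nu-n\alpha_i}\otimes 1_{n\alpha_i})\cdot M$ under the parabolic embedding; the $\und{j}\otimes i^n$-weight space of the result is exactly $1_{\und{j}i^n}M$. Part (3) serves as the base case: the Shuffle Lemma gives $\Char(\ind \Lii{i}^{\boxtimes n})=[n]![i^n]$, concentrated on the single weight $i^n\in\seq(n\alpha_i)$, which forces $\Lii{i^n}$ to be simple; the restriction claims reduce to the fact that $\res_{\und{\mu}}^{n\alpha_i}\Lii{i^n}$ has all weight spaces of a single parabolic type, while the socle is irreducible by a Hom-space calculation via Frobenius reciprocity. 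The statement $\etil{i}\Lii{i^n}\cong \Lii{i^{n-1}}$ follows from (1) and the Shuffle Lemma.

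Part (2) is the crux. For (a), compute $\Char M=\Char N\Shuffle \Char \Lii{i^n}$. A shuffle contributes to the $\und{j}i^{\ep{}+n}$-weight space precisely when the final $\ep{}+n$ entries are $i$; by part (1) and the definition $\ep{}=\ep{i}(N)$, exactly $\ep{}$ of these terminal $i$'s come from $N$ and all $n$ from $\Lii{i^n}$. The collected $q$-weighted contributions match $\Char\bigl((\etil{i})^{\ep{}}N\boxtimes \Lii{i^{\ep{}+n}}\bigr)$, and since the latter module is simple, the character equality upgrades to an isomorphism. For (b), Frobenius reciprocity yields a nonzero map $N\boxtimes \Lii{i^n}\to \res_{\nu,n\alpha_i}^{\nu+n\alpha_i}K$ for every simple quotient $K$ of $M$, forcing $\ep{i}(K)\ge \ep{}+n$; combined with (a), which caps $\ep{i}(K)$ at exactly $\ep{}+n$, this pins down $\cosoc M$ uniquely and identifies it with $(\ftil{i})^n N$. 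Parts (c) and (d) then follow from the character inequality $\Char(\Delta_{i^{\ep{}+n}}K')\le \Char(\Delta_{i^{\ep{}+n}}M)$ for every composition factor $K'$, which together with (a) forces the cosocle to appear with multiplicity one and all other factors to have strictly smaller $\ep{i}$.

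Parts (4)--(7) then follow essentially formally: (4) is the $n=1$ case of (b) and (d) together with the adjunction identifying $\soc\e{i}M$ with the head of $\ind\bigl(\etil{i}M\boxtimes \Lii{i}\bigr)$; (5) iterates (a) and uses (3) to split the $\Lii{i^m}$ factor into $[m]!$ copies; (6) applies (b) to $N=\etil{i}M$ at $n=1$ combined with the uniqueness in (4); and (7) follows from (6) and the definition of $\ftil{i}$ as the cosocle of $\ind(-\boxtimes \Lii{i})$. The main obstacle will be the shuffle bookkeeping in (2)(a)---verifying that only the ``maximally shifted'' shuffle contributes to the top weight space and that the $q$-weighted sum collapses to the correct quantum binomial factor $\bigl[\begin{smallmatrix}\ep{}+n\\\ep{}\end{smallmatrix}\bigr]$---after which the remainder of the proposition is largely formal.
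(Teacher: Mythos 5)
The paper does not prove this proposition: it is stated with the preface ``Some of the most important facts about $\e{i}, \etil{i}, \ftil{i}$ stated in \cite{KL09} are given in the following proposition'' and cited to Khovanov--Lauda (and implicitly to Grojnowski and Kleshchev for the Hecke-algebra antecedents). So there is no in-paper proof to compare against; you have reconstructed an argument from scratch.

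Your sketch follows the standard literature route (Shuffle Lemma plus Frobenius reciprocity plus exactness of $\Delta_{i^n}$), and the overall architecture is right. Two places deserve more care before this would count as a complete argument. First, in (2)(a) the step ``the character equality upgrades to an isomorphism'' is correct, but only because a module whose class in $K_0$ equals the class of a simple must itself be that simple (characters of simples are a basis with nonnegative-coefficient expansions); you should say this explicitly, since character equality alone does not in general force an isomorphism. Second, and more importantly, there is a real circularity you should confront head-on: the character of $\Delta_{i^{\ep{}+n}}M$ computed by shuffling involves the quantum binomial $\bigl[\begin{smallmatrix}\ep{}+n\\\ep{}\end{smallmatrix}\bigr]$, while the target $\Char\bigl((\etil{i})^{\ep{}}N \boxtimes \Lii{i^{\ep{}+n}}\bigr)$ carries a factor $[\ep{}+n]!$; matching these requires knowing $\Char(\Delta_{i^{\ep{}}}N) = [\ep{}]!\,\Char\bigl((\etil{i})^{\ep{}}N\bigr)\cdot[i^{\ep{}}]$, which is part (5) applied to $N$. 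You list (5) as a consequence of (2)(a), so a naive ordering is circular. The resolution (as in Grojnowski/Kleshchev) is to set up a single induction on $|\nu|$ or on $\ep{}$, proving (2)(a), (4), and (5) simultaneously, with the $n=1$ base handled directly by the Shuffle Lemma on $\Delta_i M$. Your sentence ``5 iterates (a)'' gestures at this but does not make the inductive structure explicit; as written the dependency graph among your parts is not obviously well-founded.
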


On the level of characters, $\e{i}$ roughly removes an $i$ from the rightmost entry of a module's character. We can construct analogous functors for removal of $i$ from the left side of a module's character, as well as an analogue to $\ftil{i}$. These are denoted by $\ech{i}$, $\etilch{i}$, $\ftilch{i}$ and we will use them extensively in this paper. We use the involution $\sigma$ introduced below to define them. Let $w_0$ be the longest element of $\Sy{|\nu|}$. Then $\sigma: R(\nu) \rightarrow R(\nu)$ is defined as follows:
\begin{align}
1_{\und{i}} &\mapsto 1_{w_0(\und{i})}
\\
x_r &\mapsto x_{|\nu|+1-r}
\\
\psi_r 1_{\und{i}} &\mapsto
(-1)^{\delta_{i_r,i_{r+1}}}\psi_{|\nu|-r}1_{w_0(\und{i})}.
\end{align}
For an $R(\nu)$-module $M$, let $\sigma^*M$ be the $R(\nu)$-module $M$ but with the action of $R(\nu)$ twisted by $\sigma$,
\begin{equation*}
r \cdot u = \sigma(r)u.
\end{equation*}

Now let $\ech{i}: R(\nu) \Mod \rightarrow R(\nu-\alpha_i) \Mod$ be the restriction functor defined as
\begin{align}
&\ech{i} := \sigma^*\e{i} \sigma = \res^{R(\alpha_i)\otimes R(\nu-\alpha_i)}_{R(\nu - \alpha_i)} \circ \res_{\alpha_i,\nu-\alpha_i}^\nu,
\intertext{
and similarly,}
&\etilch{i}M := \sigma^*(\etil{i}(\sigma^*M)) = \soc \ech{i}M,
\\
&\ftilch{i}M := \sigma^*(\ftil{i}(\sigma^*M)) = \cosoc \ind^{\nu+\alpha_i}_{\alpha_i,\nu} \Lii{i} \boxtimes M,
\\
&\epch{i}(M) := \ep{i}(\sigma^*M) = \max\{n \geq 0 \; | \; (\etilch{i})^n M \neq \zero \}.
\end{align}

Note that by the exactness of restriction,
$\e{i}, \ech{i}$ are exact functors, while $\etil{i}$ and
$\etilch{i}$ are only left exact, and $\ftil{i}$ and $\ftilch{i}$
are only right exact.
When $k \in \N$, the indices on $\e{k}$, $\etil{k}$, $\ech{k}$,
$\etilch{k}$, $\ftil{k}$, $\ftilch{k}$ should always be interpreted
modulo $\ell$, i.e.~we also identify $k \in I$.

\begin{example}
\label{exl-triv}
The module 
of Example
\ref{exl-onedim}
can be constructed as
$\Tii{0}{6}= 
\ftil{5} \ftil{4} \ftil{3} \ftil{2} \ftil{1} \ftil{0} \UnitModule$
or as  
$\ftil{5} \ftil{4} \ftil{3} \ftil{2} \ftil{1} L(0)$.
Since $\ell=4$ this is also
$\ftil{1} \ftil{0} \ftil{3} \ftil{2} \ftil{1} \ftil{0} \UnitModule$,
but for the purposes of this paper, we prefer the first expression.

\end{example}
\begin{remark} \label{rem-char-epsilon}
There is a nice character-theoretic interpretation of $\ep{i}$ and $\epch{i}$. Let $M$ be a simple $R(\nu)$-module with $|\nu| = m$. Then
\begin{enumerate}[a.)]
\item $\ep{i}(M) = c$ implies that there exists 
\begin{equation*}
\und{i} = (i_1, \dots, i_{m-c},\underbrace{i,i, \dots ,i}_{c})
\end{equation*}
such that $1_{\und{i}}A \neq \zero$.
In other words $[\und{i}]$ is in the support of $M$; however no $[\und{j}]$ such that 
\begin{equation*}
\und{j} = (i_{1}, \dots, i_{m-c-1}, \underbrace{i ,i ,\dots, i}_{c+1}).
\end{equation*}
is in $\supp{M}$.

\item $\epch{i}(M) = c$ implies that there exists $[\und{i}]$ in the support of $M$ of the form
\begin{equation*}
\und{i} = (\underbrace{i,i,\dots ,i}_{c},i_{c+1}, \dots ,i_m)
\end{equation*}
but no $[\und{j}]$ of the form
\begin{equation*}
\und{j} = (\underbrace{i,i,\dots ,i}_{c+1},i_{c+2}, \dots ,i_m).
\end{equation*}

\end{enumerate}
\end{remark}


\subsection{Serre relations}
\label{sec-serre}

Because the functors $\e{i}$, $i \in I$, are exact, they descend to well-defined linear operators on the Grothendieck group of $R$, $G_0(R)$. It is shown in \cite{KL09,KL11} that these operators satisfy the quantum Serre relations, and that these relations are in fact minimal. We have
\begin{equation}
\sum^{-a_{ij}+1}_{r = 0} (-1)^r \e{i}^{(-a_{ij}+1-r)}\e{j}\e{i}^{(r)}[M] = \zero.
\end{equation}
for all $i \neq j \in I$  and $M \in R\Mod$,
where $e^{(r)}_i = \frac{1}{ [r] !}e^r_i$ is the divided power.
(Recall $a_{ij} = \langle h_i, \alpha_j \rangle$.)
 The minimality of these relations imply that, for $0 \leq c < -a_{ij}+1$, 
\begin{equation} \label{Serre-relations}
\sum^c_{r=0} (-1)^r\e{i}^{(c-r)}e_je_i^{(r)}
\end{equation}
is never the zero operator on $G_0(R)$ by the quantum Gabber-Kac Theorem \cite{Lus10} and the work of \cite{KL09,KL11}, which essentially computes the kernel of the map from the free algebra on generators $\e{i}$ to $G_0(R)$.


\subsection{Jump}
\label{sec-jump}

When we apply $\ftil{i}$ to irreducible $R(\nu)$-module $M$ for $i \in I$, then Proposition \ref{crystal-op-facts}.\ref{ftil-and-ep} tells us that $\ftil{i}M$ is an irreducible $R(\nu+\alpha_i)$-module with
\begin{equation}
\ep{i}(\ftil{i}M) = \ep{i}(M) + 1.
\end{equation}
We could also ask whether $\epch{i}(\ftil{i}M)$ and $\epch{i}(M)$ differ. Questions like this motivate the introduction of the
function $\jump{i}$,
which is based on a concept for Hecke algebras in \cite{Gro99}, and was introduced for KLR algebras and studied extensively in \cite{LV11}.

\begin{definition}
Let $M$ be a simple $R(\nu)$-module, and let $i \in I$. Then
\begin{equation}
\jump{i}(M) := \max \{ J \geq 0 \; | \; \epch{i}(M) = \epch{i}(\ftil{i}^J M)\}.
\end{equation}
\end{definition}

\begin{lemma} \cite{LV11} \label{jump-lemma}
Let $M$ be a simple $R(\nu)$-module. The following are equivalent:
\begin{enumerate}
\item $\jump{i}(M) = 0$
\item \label{switch-ftil-ftilch} $\ftil{i} M \cong \ftilch{i}M$
\item $\ind M \boxtimes \Lii{i^m}$ is irreducible for all $m \geq 1$
\item $\ind M \boxtimes \Lii{i^m} = \ind \Lii{i^m} \boxtimes M$ for all $m \geq 1$
\item \label{jump-formula} $\wti{i}(M) + \ep{i}(M) + \epch{i}(M) = 0$, where $\wti{i}(M) = -\langle h_i, \nu \rangle$.
\item \label{jump-lemma-6} $\ep{i}(\ftilch{i}M) = \ep{i}(M) + 1$
\item $\epch{i}(\ftil{i}M) = \epch{i}(M) + 1$
\end{enumerate}
\end{lemma}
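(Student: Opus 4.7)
The plan is to establish the seven conditions equivalent via a cycle of implications, exploiting the $\sigma$-involution to deduce statements about $\ftilch{i}, \epch{i}$ from the parallel statements about $\ftil{i}, \ep{i}$. Throughout I write simply $\ep{i}$ and $\epch{i}$ for $\ep{i}(M)$ and $\epch{i}(M)$.

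First I would prove (1)~$\Leftrightarrow$~(7): the Shuffle Lemma gives
\[
\Char(\ind M \boxtimes \Lii{i}) = \Char(M) \Shuffle [i],
\]
so no weight of $\ind M \boxtimes \Lii{i}$ begins with more than $\epch{i}(M) + 1$ consecutive $i$'s; hence $\epch{i}(\ftil{i}M) \leq \epch{i}(M) + 1$. Since $\jump{i}(M) = 0$ is the assertion $\epch{i}(\ftil{i}M) > \epch{i}(M)$, this pins the value at $\epch{i}(M) + 1$. The mirror equivalence (1)~$\Leftrightarrow$~(6) follows by applying the same argument to $\sigma^* M$, once one knows $\jump{i}(M) = 0 \Leftrightarrow \jump{i}(\sigma^* M) = 0$. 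This symmetry is exactly the content of (5), which I would establish by tracking the triple $(\ep{i}, \epch{i}, \wti{i})$ along the $\ftil{i}$-string: $\wti{i}$ decreases by $2$, $\ep{i}$ increases by $1$, and by (7) $\epch{i}$ increases by $1$ precisely at each jump step and is otherwise constant, so the identity $\wti{i} + \ep{i} + \epch{i} = 0$ verified at an $\etilch{i}$-highest-weight module (where it reduces to the familiar $B(\infty)$ identity applied to the opposite crystal) propagates back to $M$.

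The central technical step is (1)~$\Rightarrow$~(3), which I would prove by induction on $m$. By Proposition~\ref{crystal-op-facts}(\ref{ftil-and-ep}), the cosocle of $N_m := \ind M \boxtimes \Lii{i^m}$ is the simple module $(\ftil{i})^m M$, and every other composition factor $K$ of $N_m$ satisfies $\ep{i}(K) < \ep{i}(M) + m$. Any such extra $K$ would appear in $\e{i}^{\ep{i}(M)+m} N_m$; comparing the character of this module against $\Char(M) \Shuffle [i^m]$ via Proposition~\ref{crystal-op-facts}(\ref{pull-off-i's}) would produce a weight vector of $\ftil{i}M$ with strictly more than $\epch{i}(M) + 1$ leading $i$'s, contradicting the bound from the (1)~$\Leftrightarrow$~(7) analysis above. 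This shuffle-character bookkeeping is the main obstacle of the lemma.

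The remaining implications are formal. For (3)~$\Rightarrow$~(4), the quantum shuffle is commutative, so $N_m$ and $\ind \Lii{i^m} \boxtimes M$ have identical characters and, by (3), are both irreducible, hence isomorphic. For (4)~$\Rightarrow$~(2) take $m = 1$ and pass to cosocles on both sides. Finally (2)~$\Rightarrow$~(7) comes from Proposition~\ref{crystal-op-facts}(\ref{cosoc-is-simple}) applied to $\ftilch{i}M = \ftil{i}M$, which yields $\epch{i}(\ftil{i}M) = \ep{i}(\sigma^*\ftil{i}M) = \ep{i}(\ftilch{i}\sigma^*M) = \epch{i}(M) + 1$ after transporting via $\sigma$.
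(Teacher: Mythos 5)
The paper offers no proof of this lemma: it is stated with the citation ``See \cite{LV11}'' and nothing more. So your attempt cannot be compared with a proof in the text; it has to stand on its own, and as written it has two genuine gaps and one suppressed lemma.

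First, the proposed route to (5) does not work. You want to verify $\wti{i}+\ep{i}+\epch{i}=0$ at an $\etilch{i}$-highest-weight module $N$ (so $\epch{i}(N)=0$) and then propagate it down the $\ftil{i}$-string. But at such an $N$ the identity would assert $\ep{i}(N)=-\wti{i}(N)=\langle h_i,\wt(N)\rangle$ with $\wt(N)=-\nu$, and this is simply false in general: take $N=\Lii{j}$ for $j$ adjacent to $i$, where $\ep{i}(N)=\epch{i}(N)=0$ but $\wti{i}(N)=-a_{ij}=1\neq 0$ (indeed $\jump{i}(\Lii{j})=1$). The quantity $\wti{i}+\ep{i}+\epch{i}$ is not an invariant of $B(\infty)$ that one can normalize at the top of a string; it is $\jump{i}$ itself, so the argument is circular. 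The correct content here is the formula $\jump{i}(M)=\wti{i}(M)+\ep{i}(M)+\epch{i}(M)$ (the paper's \eqref{eqn-jump}), which is a theorem of \cite{LV11} requiring its own proof (via the Serre relations and the Kashiwara-operator calculus on $G_0(R)$), not a crystal axiom one can cite.

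Second, the argument offered for $(1)\Rightarrow(3)$ contains a reversed implication. If $K$ is a composition factor of $N_m:=\ind M\boxtimes\Lii{i^m}$ other than $\ftil{i}^m M$, then by Proposition~\ref{crystal-op-facts} one has $\ep{i}(K)<\ep{i}(M)+m$, which means $\e{i}^{\,\ep{i}(M)+m}K=\zero$; such $K$ therefore does \emph{not} appear in $\e{i}^{\,\ep{i}(M)+m}N_m$, contrary to what you assert. The character-comparison you describe cannot detect $K$ in that way. The clean route, once (5) is available, is: any composition factor $K$ of $N_m$ satisfies $\ep{i}(K)\le\ep{i}(M)+m$, $\epch{i}(K)\le\epch{i}(M)+m$ (the latter by the character bound applied to the $\sigma$-twist, $\ind\Lii{i^m}\boxtimes M$), and $\wti{i}(K)=\wti{i}(M)-2m$; feeding these into the jump formula forces $\jump{i}(K)\le\jump{i}(M)=0$, hence equality throughout, hence $\ep{i}(K)=\ep{i}(M)+m$, hence $K\cong\ftil{i}^m M$. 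As written you have neither (5) nor this argument.

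Finally, the deduction $\jump{i}(M)=0\Leftrightarrow\epch{i}(\ftil{i}M)=\epch{i}(M)+1$ tacitly uses the monotonicity $\epch{i}(\ftil{i}M)\ge\epch{i}(M)$; the definition of $\jump{i}$ only gives $\epch{i}(\ftil{i}M)\neq\epch{i}(M)$, so you need the lower bound as well as the Shuffle-Lemma upper bound. This is true and not hard — from the surjection $\ind\Lii{i^c}\boxtimes\etilch{i}^{\,c}M\twoheadrightarrow M$ with $c=\epch{i}(M)$, induce by $\Lii{i}$ and push to the cosocle, then apply Frobenius reciprocity — but it should be stated, since the whole string analysis rests on it. The remaining implications $(3)\Rightarrow(4)\Rightarrow(2)\Rightarrow(7)$ and the $\sigma$-transport in your last paragraph are fine.
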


\begin{proof}
See \cite{LV11}.
\end{proof}

It is shown in \cite{LV11} that 
\begin{equation} \label{ftil-and-jump-ob}
\jump{i}(\ftil{i}M) = \max\{0,\jump{i}(M) -1\}= \jump{i}(\ftilch{i}M).
\end{equation}
It is also shown in \cite{LV11} that 
\begin{equation} \label{eqn-jump}
\jump{i}(M) =  
\wti{i}(M) + \ep{i}(M) + \epch{i}(M).
\end{equation}
Using information from $\jump{i}$ we can also determine when the crystal operators commute with their $\sigma$-symmetric versions.

\begin{example}
Suppose $\ell > 2$. Observe $\jump{1}(\Lii{0}) = 1$ and 
\begin{equation}
\ftilch{1}\ftil{1} \Lii{0} \cong \ind \Lii{1} \boxtimes \T{0,1}
\end{equation}
whose character has support $\{[1,0,1],[0,1,1],[0,1,1]\}$. However 
\begin{equation}
\ftil{1}\ftilch{1} \Lii{0} \cong \ind \Si{1,0} \boxtimes \Lii{1}
\end{equation}
whose character has support $\{[1,0,1],[1,1,0],[1,1,0]\}$.
\end{example}

In the case $\ell = 2$, note $\jump{1}(\T{0,1}) = 1$ and we can similarly calculate $\ftilch{1} \ftil{1} \T{0,1} \not\cong \ftil{1}\ftilch{1} \T{0,1}$ (in fact the former is 8-dimensional while the latter is 4-dimensional).

We shall see below that this phenomenon is special to $\jump{i}(M) = 1$.


\begin{lemma} \label{commuting-functors}
Let $M$ be a simple $R(\nu)$-module. 
\begin{enumerate} 
\item \label{diff-i-j} \cite{LV11}
If $i \neq j$, then
\begin{enumerate} 
\item \label{2-fs} $\ftil{i}\ftilch{j}M \cong \ftilch{j}\ftil{i}M.$ 
\item \label{e-and-f} If $\etilch{j}M \neq \zero$ then
$\ftil{i}\etilch{j}M \cong \etilch{j}\ftil{i}M$. 
\item \label{f-and-e}
If $\etil{j}M \neq \zero$ then $\ftilch{i}\etil{j}M \cong
\etil{j}\ftilch{i}M$.
\item \label{2-es} If further $\etil{i}M \neq \zero$ then,
$\etil{i}\etilch{j}(M) \cong \etilch{j}\etil{i}(M)$. 
\end{enumerate}

\omitt{\item $\jump{i}(\etilch{i}M) = 0$ then $\ftil{i}\etilch{i}(M) \cong \etilch{i}\ftil{i}(M).$}

\item \label{commuting-f-same-i} 
\begin{enumerate}
\item \label{2-fs-same} $\jump{i}(M) \neq 1$ if and only if $\ftilch{i} \ftil{i} M \cong \ftil{i}\ftilch{i} M$. 
\item \label{etilchi-ftil-same} If $\etilch{i}M \neq \zero$, then $\jump{i}(\etilch{i}M) \neq 1$ if and only if $\etilch{i}\ftil{i}M \cong \ftil{i}\etilch{i}M$.
\item \label{etil-ftilch-same} If $\etil{i}M \neq \zero$, then $\jump{i}(\etil{i}M) \neq 1$ if and only if $\etil{i}\ftilch{i}M \cong \ftilch{i}\etil{i}M$.
\end{enumerate}
\end{enumerate}
\end{lemma}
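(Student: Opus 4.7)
I would handle Part 1 by direct appeal to \cite{LV11} and focus on Part 2, where parts (b) and (c) can be reduced to (a) via the identities $\etil_i\ftil_i N \cong N$ and $\etilch_i\ftilch_i N \cong N$ of Proposition \ref{crystal-op-facts}(\ref{e-and-f-undo-eachother}) together with the $\sigma^*$-symmetry interchanging $(\ftil_i,\etil_i)$ with $(\ftilch_i,\etilch_i)$. So the real work is in proving Part 2(a), and I would split on $\jump_i(M)$.

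When $\jump_i(M) = 0$, Lemma \ref{jump-lemma}(\ref{switch-ftil-ftilch}) gives $\ftil_i M \cong \ftilch_i M$, and \eqref{ftil-and-jump-ob} shows $\jump_i(\ftil_i M)=\jump_i(\ftilch_i M)=0$; chaining the identifications yields $\ftil_i\ftilch_i M \cong \ftil_i^2 M \cong \ftilch_i\ftil_i M$. When $\jump_i(M)=1$, \eqref{ftil-and-jump-ob} again forces both $\jump_i(\ftil_i M)$ and $\jump_i(\ftilch_i M)$ to vanish, so $\ftilch_i\ftil_i M \cong \ftil_i^2 M$ while $\ftil_i\ftilch_i M \cong \ftilch_i^2 M$; using Lemma \ref{jump-lemma}(\ref{jump-lemma-6}) (and its $\sigma^*$-dual) to compare $\ep_i$, I get $\ep_i(\ftil_i^2 M)=\ep_i(M)+2$ against $\ep_i(\ftilch_i^2 M)=\ep_i(M)+1$, so the two modules are non-isomorphic. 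This already handles the contrapositive of the ``$\Leftarrow$'' direction of (a).

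The main obstacle is the case $\jump_i(M) \geq 2$, where I must show that the a priori different simples $\ftil_i\ftilch_i M$ and $\ftilch_i\ftil_i M$ coincide. Both arise as simple quotients of the common module $X := \ind\Lii{i}\boxtimes M \boxtimes \Lii{i}$: the surjections $\ind\Lii{i}\boxtimes M \twoheadrightarrow \ftilch_i M$ and $\ind M\boxtimes\Lii{i}\twoheadrightarrow\ftil_i M$ induce
\begin{align*}
X &\twoheadrightarrow \ind\ftilch_i M\boxtimes\Lii{i}\twoheadrightarrow \ftil_i\ftilch_i M,\\
X &\twoheadrightarrow \ind\Lii{i}\boxtimes\ftil_i M \twoheadrightarrow \ftilch_i\ftil_i M,
\end{align*}
so both are composition factors of $X$. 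The hypothesis $\jump_i(M)\geq 2$ gives $\jump_i(\ftil_i M), \jump_i(\ftilch_i M) \geq 1$ by \eqref{ftil-and-jump-ob}, and then Lemma \ref{jump-lemma}(\ref{jump-lemma-6}) yields matching invariants $\ep_i(\ftil_i\ftilch_i M) = \ep_i(\ftilch_i\ftil_i M) = \ep_i(M)+1$, and, by $\sigma^*$-symmetry, matching $\epch_i$ equal to $\epch_i(M)+1$. Applying Proposition \ref{crystal-op-facts}(\ref{all-comp-have-less-i}) to each of the two outer inductions, all other composition factors of $X$ have strictly smaller $\ep_i$ (respectively $\epch_i$); the hard step, which I expect to be the main technical hurdle, is then to argue that at most one composition factor of $X$ can achieve the extremal pair $(\ep_i(M)+1,\epch_i(M)+1)$, forcing $\ftil_i\ftilch_i M \cong \ftilch_i\ftil_i M$.

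Finally, for Part 2(b) I set $N=\etilch_i M$ so that $M=\ftilch_i N$; the claim becomes $\jump_i(N)\neq 1 \Leftrightarrow \etilch_i\ftil_i\ftilch_i N \cong \ftil_i N$. If $\jump_i(N)\neq 1$, then 2(a) applied to $N$ gives $\ftil_i\ftilch_i N \cong \ftilch_i\ftil_i N$, after which $\etilch_i\ftilch_i(\ftil_i N)=\ftil_i N$ delivers the isomorphism. Conversely, if $\etilch_i\ftil_i\ftilch_i N \cong \ftil_i N$, applying $\ftilch_i$ and using $\ftilch_i\etilch_i L \cong L$ for simple $L$ with $\etilch_i L \neq 0$ recovers $\ftil_i\ftilch_i N \cong \ftilch_i\ftil_i N$, which by the already-proved contrapositive in 2(a) forces $\jump_i(N)\neq 1$. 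Part 2(c) is then the $\sigma^*$-symmetric statement of (b).
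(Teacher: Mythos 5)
Your handling of the easy cases is sound and closely mirrors the paper: the $\jump_i(M)=0$ chain $\ftilch_i\ftil_i M \cong \ftil_i^2 M \cong \ftil_i\ftilch_i M$ is the paper's argument verbatim; the $\jump_i(M)=1$ separation by $(\ep_i,\epch_i)$ invariants is equivalent to the paper's computation (the paper compares $(\ep_i,\epch_i) = (m+2,c+1)$ against $(m+1,c+2)$ directly, you route it through $\ftil_i^2 M$ vs $\ftilch_i^2 M$, same content); and your reduction of 2(b) and 2(c) to 2(a) via $\etilch_i\ftilch_i N \cong N$, $\etil_i\ftil_i N \cong N$ and $\sigma^*$-duality is exactly what the paper does by invoking Proposition \ref{crystal-op-facts}.\ref{e-and-f-undo-eachother}.

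However, the case $\jump_i(M) \geq 2$ --- which is where the genuine content of 2(a) lies --- is left unfinished, and the reduction you propose is not quite enough even in outline. You observe that $\ftil_i\ftilch_i M$ and $\ftilch_i\ftil_i M$ are both simple quotients of $X := \ind \Lii{i}\boxtimes M \boxtimes\Lii{i}$ with matching $(\ep_i,\epch_i) = (\ep_i(M)+1,\epch_i(M)+1)$, and you say the ``hard step'' is to show at most one composition factor of $X$ can have this extremal pair. You are right that this is the hard step, but no argument is offered, and the reduction itself is delicate: nothing in the cited results gives uniqueness of a simple with prescribed $(\ep_i,\epch_i)$, so ``extremal invariants match'' alone cannot force an isomorphism. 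The paper takes a sharper route: it works with the exact sequence
\begin{equation*}
\zero \to \ind\Lii{i}\boxtimes K \to X \to \ind\Lii{i}\boxtimes\ftil_i M \to \zero
\end{equation*}
(where $K = \ker(\ind M\boxtimes\Lii{i} \twoheadrightarrow \ftil_i M)$) and shows the given surjection $X \twoheadrightarrow \ftil_i\ftilch_i M$ kills $\ind\Lii{i}\boxtimes K$, by ruling out nonzero maps $\ind\Lii{i}\boxtimes D \to \ftil_i\ftilch_i M$ for every composition factor $D$ of $K$. First $D = \ftilch_i M$ is excluded because $\cosoc(\ind\Lii{i}\boxtimes\ftilch_i M) = (\ftilch_i)^2 M$ has $\epch_i$ off by one; then any other $D$ is excluded by a contradiction argument showing such a map would force $\jump_i(D)=0$ and then $D\cong\ftilch_i M$. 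The conclusion is that the surjection factors through $\ind\Lii{i}\boxtimes\ftil_i M$, whose unique simple quotient is $\ftilch_i\ftil_i M$, yielding the isomorphism. You would need to supply an argument of comparable strength to close the gap, and the invariant-counting heuristic as stated will not suffice.
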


\begin{proof} \mbox{}
\begin{enumerate}
\item Consider the short exact sequence,
\begin{equation}
\zero \rightarrow K \rightarrow \ind M \boxtimes \Lii{i} \rightarrow \ftil{i}M \rightarrow \zero
\end{equation}
and recall $\ftil{i}M$ is the unique composition factor of $\ind M
\boxtimes \Lii{i}$ such that $\ep{i}(\ftil{i}M) = \ep{i}(M) + 1$, and
that for all composition factors $N$ of $K$, $\ep{i}(N) \leq
\ep{i}(M)$.
By the exactness of induction there is a second short exact sequence
\begin{equation} \label{induced-exact-sequence}
\zero \rightarrow \ind \Lii{j} \boxtimes K \rightarrow \ind \Lii{j} \boxtimes M \boxtimes \Lii{i} \rightarrow \ind \Lii{j} \boxtimes \ftil{i}M \rightarrow \zero,
\end{equation}
and since $i \neq j$ the Shuffle Lemma tells us that for all
composition factors $N'$ of $\ind \Lii{j} \boxtimes K$, $\ep{i}(N')
\leq \ep{i}(M)$. 
By the Shuffle Lemma and  Frobenius reciprocity
 \begin{equation}
\ep{i}(\ftilch{j}\ftil{i}M) =
\ep{i}(\ftil{i}\ftilch{j}M) =
\ep{i}(M) + 1.
\end{equation}
Hence there can be no nonzero map
\begin{equation}
\ind \Lii{j} \boxtimes K \rightarrow
\ftil{i} \ftilch{j} M,
\end{equation} 
so that
the submodule $\ind \Lii{j} \boxtimes K$ is contained in the kernel of
$\beta$, as pictured in \eqref{commuting-ftils-diagram}.

\begin{equation}
 \label{commuting-ftils-diagram}
\begin{tikzpicture}
\node at (0,0) {$\ind \Lii{j} \boxtimes M \boxtimes \Lii{i}$};
\node at (5,.7) {$\ind \Lii{j} \boxtimes \ftil{i}M$};
\node at (5,-.7) {$\ind \ftilch{j}M \boxtimes \Lii{i}$};
\node at (9,.7) {$\ftilch{j}\ftil{i}M$};
\node at (9,-.7) {$\ftil{i}\ftilch{j}M$};
\node at (0,-1.5) {};
\node at (5,2.5) {$\alpha$};
\node at (5,-2.5) {$\beta$};
\draw[thick,->>] (2,.2) -- (3.2,.5);
\draw[thick,->>] (2,0) -- (3.2,-.5);
\draw[thick,->>] (7,.73) -- (8,.73);
\draw[thick,->>] (7,-.73) -- (8,-.73);
\draw[thick,->>] (1,.3) to [out = 50, in = 150] (8.8,1.2);
\draw[thick,->>] (1,-.3) to [out = -50, in = -150] (8.8,-1.2);
\end{tikzpicture} 
\end{equation}
Hence $\beta$ induces 
 a nonzero map (necessarily surjective)
\begin{equation}
\ind \Lii{j} \boxtimes \ftil{i}M \twoheadrightarrow \ftil{i}\ftilch{j}M.
\end{equation}
Because $\ind \Lii{j} \boxtimes \ftil{i} M$ has unique simple quotient
$\ftilch{j}\ftil{i}M$, then $\ftilch{j}\ftil{i}M \cong
\ftil{i}\ftilch{j}M$. This proves \ref{2-fs}.

The three isomorphisms in \ref{e-and-f}, \ref{f-and-e}, and \ref{2-es}
all follow from \ref{2-fs}. For example, if $\etilch{j}M$ is nonzero,
then
\begin{equation}
\ftil{i}M \cong \ftil{i}\ftilch{j}\etilch{j}M \cong
\ftilch{j}\ftil{i}\etilch{j}M. 
\end{equation}
Applying $\etilch{j}$ to both sides we get \ref{e-and-f}. \ref{f-and-e} and \ref{2-es} follow similarly.

\item
We prove \ref{2-fs-same}. Let $c = \epch{i}(M), m = \ep{i}(M)$.
\begin{itemize}
	\item Suppose $\jump{i}(M) = 0$. Then also $\jump{i} (\ftil{i}
M) = \jump{i} (\ftilch{i} M) = 0$ by \eqref{ftil-and-jump-ob}. Thus by
Lemma \ref{jump-lemma}
\begin{equation}
\ftilch{i}\ftil{i}M \cong \ftil{i}\ftil{i}M \cong \ftil{i}\ftilch{i}M.
\end{equation}
  
  \item Suppose $\jump{i}(M) = 1$. By Lemma \ref{jump-lemma} and
Proposition \ref{crystal-op-facts}, $\ep{i}(\ftil{i}M) = m+1$ but
$\epch{i}(\ftil{i}M) = c$. While $\ep{i}(\ftilch{i}M) = m$ but
$\epch{i}(\ftilch{i}M) = c+1$. Further by \eqref{ftil-and-jump-ob}
$\jump{i}(\ftil{i}M) = \jump{i} (\ftilch{i} M) = 0$. Hence
$\ep{i}(\ftilch{i}\ftil{i}M) = m+2$, $\epch{i}(\ftilch{i} \ftil{i} M) =
c+1$ whereas $\ep{i}(\ftil{i}\ftilch{i}M) = m+1$, $\epch{i}(\ftil{i}
\ftilch{i}M) = c+2$. Thus the two modules cannot be isomorphic.
  
\item
Suppose $\jump{i}(M) \geq 2$. Then $\jump{i}(\ftil{i}M) =
\jump{i}(\ftilch{i}M) \geq 1$. We calculate
\begin{align}
\ep{i}(\ftil{i}\ftilch{i} M) = m+1 = \ep{i}(\ftilch{i} \ftil{i} M) \\
\epch{i}(\ftil{i} \ftilch{i} M) = c+1 = \epch{i}(\ftilch{i} \ftil{i} M).
\end{align}
We will show there is no nonzero map 
\begin{equation}
\ind \Lii{i} \boxtimes K \rightarrow \ftil{i} \ftilch{i} M 
\end{equation}
for any proper submodule $K \subseteq \ind M \boxtimes \Lii{i}$.
Given we have a surjection 
\begin{equation}
\ind \Lii{i} \boxtimes M \boxtimes \Lii{i} \twoheadrightarrow
\ftil{i} \ftilch{i} M
\end{equation}
this means we must have a nonzero map
\begin{equation}
\ind \Lii{i} \boxtimes \ftil{i}M \rightarrow \ftil{i}\ftilch{i}M,
\end{equation}
which will prove the lemma as 
\begin{equation}
\ftilch{i} \ftil{i} M = \cosoc \ind \Lii{i} \boxtimes \ftil{i}M.
\end{equation}
First note there is no nonzero map
\begin{equation}
\ind \Lii{i} \boxtimes \ftilch{i}M \rightarrow \ftil{i} \ftilch{i}M
\end{equation}
as $\cosoc( \ind \Lii{i} \boxtimes \ftilch{i}M) = (\ftilch{i})^2 M$ and
$\epch{i}((\ftilch{i})^2 M) = c+ 2 \neq c+1 = \epch{i}(\ftil{i}
\ftilch{i}M)$. Let $D$ be any other composition factor of $\ind M
\boxtimes \Lii{i}$ apart from $\ftil{i}M$ or $\ftilch{i}M$ (recall the
latter occur with multiplicity one as composition factors). Then by
Proposition \ref{crystal-op-facts}, $\ep{i}(D) \leq m$,  $\epch{i}(D)
\leq c$. If there were a nonzero map $\ind \Lii{i} \boxtimes D
\rightarrow \ftil{i} \ftilch{i}M$, it would imply $\ftilch{i} D \cong
\ftil{i}\ftilch{i} M$ and so $\epch{i}(\ftilch{i}D) = c+1$ meaning
$\epch{i}(D) = c$. Also $m+1 = \ep{i}(\ftilch{i}D) \leq \ep{i}(D) + 1$
by the Shuffle Lemma, forcing $\ep{i}(D) = m$. By Lemma
\ref{jump-lemma} this forces $0 = \jump{i}(D)$ and $\ftil{i}D \cong
\ftilch{i}D \cong \ftil{i}\ftilch{i} M$ from above, forcing $D \cong
\ftilch{i}M$, which we already ruled out. Hence there must be a nonzero
map
\begin{equation}
\ind \Lii{i} \boxtimes \ftil{i} M \rightarrow \ftil{i} \ftilch{i} M.
\end{equation}
Now that we have established $\ftilch{i} \ftil{i} M \cong
\ftil{i}\ftilch{i} M$ if and only if $\jump{i}(M) \neq 1$,
statements \ref{etilchi-ftil-same} and \ref{etil-ftilch-same}
follow directly from Proposition
\ref{crystal-op-facts}.\ref{e-and-f-undo-eachother}.

\end{itemize}

\end{enumerate}
\end{proof}

\begin{remark} \label{ep-and-ftil-for-i-neq-j}
Because $\etilch{i}$ and $\ftil{j}$ commute for $i \neq j$,
then $\epch{i}(\ftil{j}M) = \epch{i}(M)$. An equivalent statement holds for $\etil{i}$, $\ftilch{j}$, and $\ep{i}$.
When $\jump{i}(M) \neq 0$, $\epch{i}(\ftil{i}M) = \epch{i}(M)$.
\end{remark}


\section{The functor $\pr{}$}
\label{sec-pr}

For $\Lambda = \sum_{i \in I} \lambda_i \Lambda_i \in P^+$ define
$\cycloI{\Lambda}_\nu$ to be the two-sided ideal of $R(\nu)$
generated by the elements $x_1^{\lambda_{i_{1}}}1_{\und{i}}$ for all
$\und{i} \in \seq(\nu)$. When $\nu$ is clear from the context we
write, $\cycloI{\Lambda}_\nu = \cycloI{\Lambda}$. The
{\emph{cyclotomic KLR algebra of weight $\Lambda$}} is then defined
as \begin{equation}
R^\Lambda = \bigoplus_{\nu \in Q^+} R^{\Lambda}(\nu) \quad \text{where} \quad R^\Lambda(\nu) := R(\nu)/\cycloI{\Lambda}_\nu.
\end{equation}
 The algebra $R^\Lambda(\nu)$ is finite dimensional, \cite{BK09a, LV11}.
The category of finite dimensional
 $R^{\Lambda}(\nu)$-modules is denoted $R^{\Lambda}(\nu) \Mod$ and the
category of finite dimensional $R^\Lambda$-modules is denoted
$R^{\Lambda} \Mod$.
The category of finite dimensional $R$-modules on which
$\cycloI{\Lambda}$ vanishes is denoted
$$\rep{\Lambda}.$$
While we can identify $R^{\Lambda} \Mod$ with $\rep{\Lambda},$
we choose to work with $\rep{\Lambda}.$
We
construct a right-exact functor, $\pr{}: R(\nu) \Mod \rightarrow
R(\nu)\Mod$,  via
\begin{equation} \pr{}M := M/\cycloI{\Lambda}M.
\end{equation}
It is customary in the literature to interpret $\pr{}$ as being a
functor from $R(\nu) \Mod$ to $R^{\Lambda}(\nu) \Mod$, but in this
paper it will be more convenient to
consider it as a functor $R(\nu) \Mod \to \rep{\Lambda}$ .
The reader may keep in mind that the image of $\pr{}$ consists of
$R(\nu)$-modules which descend to $R^{\Lambda}(\nu)$-modules.
Observe
that in the opposite direction there is an exact functor $\infl{}:
R^{\Lambda}(\nu) \Mod \rightarrow R(\nu) \Mod$, where $R(\nu)$ acts on
$R^{\Lambda}(\nu)$-module $M$ through the projection map $R(\nu)
\twoheadrightarrow R^{\Lambda}(\nu)$.

\begin{remark} \label{sujrection-onto-pr-remark}
If $M$ is a $R(\nu)$-module and $A$ is a simple
module in $\rep{\Lambda}$ for $\Lambda \in P^+$, then since $\pr{}A
\cong A$, the right exactness of $\pr{}$ implies that any surjection
$M \twoheadrightarrow A$ gives a surjection
$\pr{}M \twoheadrightarrow A$.
Similarly, since there always exists a surjection $M \twoheadrightarrow
\pr{}M$, given a surjection $\pr{}M \twoheadrightarrow A$ we
immediately get a surjection $M \twoheadrightarrow A$. In such
situations there is an equivalence between the two surjections $M
\twoheadrightarrow A$ and $\pr{}M \twoheadrightarrow A$ which we will
henceforth use freely. 
\end{remark}

If $M$ is simple then either $\pr{}M = \zero$ or $\pr{}M = M$. There is
a useful criterion for determining the action of $\pr{}$ on simple
$R(\nu)$-modules given by the following proposition.

\begin{proposition} \label{cyclotomic-char} \cite{LV11}
Let $\Lambda
= \sum_{i \in I} \lambda_i \Lambda_i \in P^+$, $\nu \in Q^+$,
 and let $M$ be a
simple $R(\nu)$-module. Then $\cycloI{\Lambda} M = \zero$ if and
only if $\pr{}M = M$ if and only if $\pr{}M  \neq \zero$ if and only if
$$\epch{i}(M) \leq \lambda_i$$
for all $i \in I$.
When these conditions hold
$M \in \rep{\Lambda}$. Hence we may identify $M$ with $\pr{}M$
(or as an $R^{\Lambda}(\nu)$-module).
\end{proposition}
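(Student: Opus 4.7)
The plan is to separate the seven-part equivalence into the formal chain (a) $\Leftrightarrow$ (b) $\Leftrightarrow$ (c), which follows from the definition of $\pr{}$ and the simplicity of $M$, and the substantive equivalence with (d).

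For the formal part: by definition $\pr{}M = M/\cycloI{\Lambda}M$, so (a) $\Leftrightarrow$ (b) is immediate. Since $M \neq 0$, (b) $\Rightarrow$ (c) is trivial. For (c) $\Rightarrow$ (b), note that $\cycloI{\Lambda}M$ is an $R(\nu)$-submodule of the simple module $M$, so it is either $0$ or all of $M$; the latter would force $\pr{}M = 0$, contradicting (c).

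For (a) $\Leftrightarrow$ (d), I would first observe that since each $1_{\und{j}}$ is an idempotent commuting with $x_1$, the two-sided ideal $\cycloI{\Lambda}$ annihilates $M$ if and only if $x_1^{\lambda_{j_1}} 1_{\und{j}} M = 0$ for every $\und{j} = (j_1,\ldots,j_m) \in \seq(\nu)$. The heart of the matter is then the following key lemma: for any simple $R(\nu)$-module $M$ and any $i \in I$, setting $c := \epch{i}(M)$, (i) $x_1^c \cdot 1_{\und{i}} M = 0$ for every $\und{i}$ with $i_1 = i$, and (ii) if $c > 0$, then there exists $\und{i}$ with $i_1 = i$ such that $x_1^{c-1} \cdot 1_{\und{i}} M \neq 0$. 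Given (i) and (ii), both remaining directions are routine. For (d) $\Rightarrow$ (a), the hypothesis $\epch{j_1}(M) \leq \lambda_{j_1}$ forces $x_1^{\lambda_{j_1}} 1_{\und{j}} M \subseteq x_1^{\epch{j_1}(M)} 1_{\und{j}} M = 0$. For the contrapositive of (a) $\Rightarrow$ (d): if $\epch{i}(M) > \lambda_i$, then (ii) supplies $v \in 1_{\und{i}} M$ with $x_1^{\epch{i}(M)-1} v \neq 0$, and since $\lambda_i \leq \epch{i}(M)-1$ we factor $x_1^{\epch{i}(M)-1} = x_1^{\epch{i}(M)-1-\lambda_i} \cdot x_1^{\lambda_i}$ to conclude $x_1^{\lambda_i} v \neq 0$, whence $\cycloI{\Lambda}M \neq 0$.

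The main obstacle is the key lemma, of which (i) is the easier half. By Remark \ref{rem-char-epsilon}, $\res^{\nu}_{(c+1)\alpha_i,\nu-(c+1)\alpha_i} M = 0$, and every composition factor of $\res^{\nu}_{c\alpha_i,\nu-c\alpha_i} M$ is of the form $\Lii{i^c} \boxtimes K$, since $\Lii{i^c}$ is the unique simple $R(c\alpha_i)$-module. A direct induction on $c$ using relation \eqref{dot-past-crossing} (together with the fact that $x_1,\dots,x_c$ act as zero on the cyclic generator $1_{i}^{\boxtimes c} \otimes v$ of $\ind \Lii{i}^{\boxtimes c}$) shows $x_1^c$ kills $\Lii{i^c}$, and hence the entire restriction. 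Part (ii) is subtler; my approach would be to invoke the $\sigma^*$-symmetric form of Proposition \ref{crystal-op-facts}(\ref{ftil-and-ep})(\ref{cosoc-is-simple}), producing a surjection $\ind \Lii{i^c} \boxtimes (\etilch{i})^c M \twoheadrightarrow M$, then track a weight vector of $\Lii{i^c}$ of maximal $x_1$-height through this map. That the image remains nonzero follows from the multiplicity-one statement in Proposition \ref{crystal-op-facts}(\ref{ftil-and-ep})(\ref{cosoc-is-simple}) and a Shuffle Lemma character comparison at the weight space with exactly $c$ leading $i$'s, where the induced module and $M$ have matching dimension.
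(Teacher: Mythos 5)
The paper cites \cite{LV11} for this proposition and gives no proof of its own, so your argument can only be judged on its internal merits. Your overall skeleton is sound: the formal chain (a) $\Leftrightarrow$ (b) $\Leftrightarrow$ (c) via simplicity of $M$, and the reduction of (a) $\Leftrightarrow$ (d) to a key lemma about $x_1$-nilpotence on weight spaces $1_{\und{i}}M$ with $i_1 = i$. Part (ii) of your lemma, and the direction (a) $\Rightarrow$ (d) it supports, are correct. (It is cleanest to get part (ii) straight from the isomorphism $\res^\nu_{c\alpha_i,\,\nu-c\alpha_i}M \cong \Lii{i^c}\boxtimes(\etilch{i})^c M$, the $\sigma$-twist of the identity in Proposition \ref{crystal-op-facts}.\ref{ftil-and-ep}.\ref{cosoc-is-simple}, combined with $x_1^{c-1}\neq 0$ on $\Lii{i^c}$, rather than via your surjection-plus-dimension-count, but your version does work.)

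There is, however, a genuine gap in your proof of part (i), which you then invoke in the step (d) $\Rightarrow$ (a). You need $x_1^c 1_{\und{i}}M = 0$ for \emph{every} $\und{i}$ with $i_1 = i$, but your argument only addresses the weight spaces lying in $\res^\nu_{c\alpha_i,\,\nu-c\alpha_i}M$, i.e.\ those $\und{i}$ beginning with a block of $c$ consecutive $i$'s. Sequences with $i_1 = i$ but fewer than $c$ leading $i$'s do occur in $\supp{M}$ and are never treated. There is also a secondary slip in the same passage: from ``$x_1^c$ kills every composition factor $\Lii{i^c}\boxtimes K$'' you conclude ``$x_1^c$ kills the whole restriction,'' which is false in general for a non-semisimple module (one only gets nilpotence of order $c$ times the length); it is saved here only because $\res^\nu_{c\alpha_i,\,\nu-c\alpha_i}M$ is in fact \emph{simple}, isomorphic to $\Lii{i^c}\boxtimes(\etilch{i})^c M$, but you do not observe this. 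To close the main gap you can reuse the surjection $\pi\colon\ind \Lii{i^c}\boxtimes N \twoheadrightarrow M$ with $N=(\etilch{i})^c M$ and $\epch{i}(N)=0$ that you already built for part (ii), and show that $x_1^c$ annihilates every weight space of the \emph{induced} module whose first entry is $i$: in the basis \eqref{ind-basis} such a vector is $\psi_{\wh{w}}\otimes(u\otimes n)$ where, since the weight of $n$ cannot begin with $i$, the $i$ in position $1$ must come from the $\Lii{i^c}$ block; minimality of the coset representative $w$ then forces $w(1)=1$, so $\psi_{\wh{w}}$ involves only $\psi_r$ with $r\geq 2$, commutes with $x_1$, and $x_1^c\,\psi_{\wh{w}}\otimes(u\otimes n) = \psi_{\wh{w}}\otimes(x_1^c u\otimes n)=0$. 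Passing to the quotient $M$ gives part (i) in full.
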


In this paper we will primarily consider $\Lambda =
\Lambda_i$ in which case $\cycloI{\Lambda_i}_\nu$ is generated by
$x_11_{ii_2 \dots i_m}$ and $1_{ji_2 \dots i_m}, j \neq i$
 ranging over $\und{i}\in \seq(\nu)$.

Notice that Proposition \ref{cyclotomic-char} immediately tells us
that the 1-dimensional modules $\Tii{i}{k} \in \rep{\Lambda_i}$ for
any $k \geq 0$. For $\Lambda = \sum_{i \in I}\lambda_i \Lambda_i \in
P^+$ and $M$ an irreducible $R(\nu)$-module
set
\begin{equation} \label{phcyclo-formula} \phcyc{}{i}(M)
= \lambda_i + \ep{i}(M) + \wti{i}(M). 
\end{equation}
Notice that
when $\Lambda = \Lambda_j$ this gives
\begin{equation}
\label{special-phcyclo-formula}
\phcyc{j}{i}(M) = \delta_{ij} + \ep{i}(M) + \wti{i}(M).
\end{equation}

\begin{remark} \label{when-phcyc-jump-the-same}
By formula \eqref{special-phcyclo-formula} if $M$ is a simple
module in $\rep{\Lambda_j}$ it follows that
\begin{equation}
\phcyc{j}{i}(M) =
	 \begin{cases}
\delta_{ij} & \text{if  }M = \UnitModule, \\
\jump{i}(M) & \text{otherwise.} \\
	\end{cases}
\end{equation}
\end{remark}

\begin{proposition} \label{interp-of-phcyc} Let $M$ be a simple
$R(\nu)$-module with $\pr{}M \neq \zero$. Then
\begin{equation}
\phcyc{}{i}(M) = \max\{n \in \mathbb{Z} \; | \; \pr{}\ftil{i}^n M
\neq \zero \}. 
\end{equation}
\end{proposition}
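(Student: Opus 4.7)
The plan is to apply the cyclotomic criterion of Proposition \ref{cyclotomic-char} to $\ftil{i}^n M$ and track how $\epch{j}(\ftil{i}^n M)$ evolves with $n$. Writing $\Lambda = \sum_{j \in I} \lambda_j \Lambda_j$, we have $\pr{}\ftil{i}^n M \neq \zero$ if and only if $\epch{j}(\ftil{i}^n M) \leq \lambda_j$ for all $j \in I$. For $j \neq i$, Remark \ref{ep-and-ftil-for-i-neq-j} (which follows from Lemma \ref{commuting-functors}.\ref{e-and-f}: $\etilch{j}$ commutes with $\ftil{i}$ when $i \neq j$) gives $\epch{j}(\ftil{i}^n M) = \epch{j}(M)$. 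Since $\pr{}M \neq \zero$, the hypothesis combined with Proposition \ref{cyclotomic-char} yields $\epch{j}(M) \leq \lambda_j$ for all $j$; so the constraints for $j \neq i$ are automatic, and the problem reduces to finding the largest $n \geq 0$ with $\epch{i}(\ftil{i}^n M) \leq \lambda_i$.

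Next I would analyze $\epch{i}(\ftil{i}^n M)$ using the jump function. By the definition of $\jump{i}$, for $0 \leq n \leq \jump{i}(M)$ we have $\epch{i}(\ftil{i}^n M) = \epch{i}(M)$. Once $n = \jump{i}(M)$, equation \eqref{ftil-and-jump-ob} gives $\jump{i}(\ftil{i}^{\jump{i}(M)} M) = 0$, and then Lemma \ref{jump-lemma}.\ref{jump-lemma-6} (together with its $\sigma$-symmetric version) says every further application of $\ftil{i}$ strictly increases $\epch{i}$ by $1$. Inductively,
\begin{equation}
\epch{i}(\ftil{i}^n M) \;=\; \epch{i}(M) + \max\bigl\{0,\; n - \jump{i}(M)\bigr\}.
\end{equation}
Because $\epch{i}(M) \leq \lambda_i$, the inequality $\epch{i}(\ftil{i}^n M) \leq \lambda_i$ holds precisely when $n \leq \jump{i}(M) + \bigl(\lambda_i - \epch{i}(M)\bigr)$, so the max on the right-hand side of the proposition equals $\jump{i}(M) + \lambda_i - \epch{i}(M)$.

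Finally I would substitute the jump formula \eqref{eqn-jump}, namely $\jump{i}(M) = \wti{i}(M) + \ep{i}(M) + \epch{i}(M)$, to obtain
\begin{equation}
\jump{i}(M) + \lambda_i - \epch{i}(M) \;=\; \lambda_i + \ep{i}(M) + \wti{i}(M) \;=\; \phcyc{}{i}(M),
\end{equation}
which matches \eqref{phcyclo-formula}. One should also note that $\ftil{i}^n M$ is itself simple (hence nonzero) for all $n \geq 0$ by Proposition \ref{crystal-op-facts}.\ref{cosoc-is-simple}, so the quantity $\pr{}\ftil{i}^n M$ is well-defined and the maximum is attained.

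The main obstacle is justifying the step function behavior of $\epch{i}(\ftil{i}^n M)$ through the transition at $n = \jump{i}(M)$: one needs both halves of the dichotomy in Lemma \ref{jump-lemma} (that $\epch{i}$ is frozen while $\jump{i} \geq 1$, and strictly increments once $\jump{i} = 0$) together with the recursion \eqref{ftil-and-jump-ob} to ensure $\jump{i}$ stays at zero under further $\ftil{i}$'s. Everything else is a direct application of Proposition \ref{cyclotomic-char} and the definition of $\phcyc{}{i}$.
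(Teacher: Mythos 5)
Your proof is correct, and notably the paper itself gives no proof of Proposition \ref{interp-of-phcyc} — it simply points, in the paragraph that follows, at property \eqref{ftil-and-jump-ob} of $\jump{i}$ and leaves the details to the reader (the machinery comes from \cite{LV11}). Your argument correctly assembles exactly the tools the paper gestures at: Proposition \ref{cyclotomic-char} to reduce the $\pr{}$-nonvanishing condition to a bound on $\epch{j}$, Remark \ref{ep-and-ftil-for-i-neq-j} to dispose of the $j \neq i$ constraints, the recursion \eqref{ftil-and-jump-ob} together with the dichotomy of Lemma \ref{jump-lemma} to establish
\begin{equation*}
\epch{i}(\ftil{i}^n M) = \epch{i}(M) + \max\{0,\, n - \jump{i}(M)\},
\end{equation*}
and finally the jump formula \eqref{eqn-jump} plus \eqref{phcyclo-formula} to close the computation. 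One small point worth making explicit: the definition of $\jump{i}(M)$ only says equality $\epch{i}(\ftil{i}^{J}M)=\epch{i}(M)$ holds at $J=\jump{i}(M)$ (and is the largest such $J$), so to get the plateau for all $0\le n\le \jump{i}(M)$ you should invoke \eqref{ftil-and-jump-ob} and Remark \ref{ep-and-ftil-for-i-neq-j} inductively (for $n < \jump{i}(M)$ one has $\jump{i}(\ftil{i}^n M) > 0$, hence $\epch{i}(\ftil{i}^{n+1}M)=\epch{i}(\ftil{i}^{n}M)$), rather than treating it as immediate from the definition. Your closing remark that $\ftil{i}^n M$ is simple for all $n \ge 0$ (so $\pr{}\ftil{i}^n M$ is either $\ftil{i}^n M$ or $\zero$) is exactly the right sanity check.
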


From property \eqref{ftil-and-jump-ob} of $\jump{i}$ it is clear
that if we apply $\ftil{i}$ sufficiently many times to any module $M
\in R^{\Lambda}(\nu) \Mod$, then eventually we will eventually reach
an $n$ for   which 
\begin{equation} \epch{i}(\ftil{i}^n M) >
\lambda_i \end{equation}
and so $\pr{}\ftil{i}^n M = \zero$.
Proposition \ref{interp-of-phcyc} shows that $\phcyc{}{i}$ measures
this  for simple
modules in $\rep{\Lambda}$.
In fact it is true that
$\pr{}M \neq \zero$ if and only if $\phcyc{}{i}(M) \geq 0$ for all
$i \in I$.
We remark below that the function $\phcyc{}{i}$ is part
of a crystal datum.

\subsubsection{Module-theoretic model of  $B(\Lambda)$}
\label{sec-modulecrystalmodel}

Let $M$ be a simple $R(\nu)$-module.  Set
\begin{equation}
\wt(M) =
- \nu \quad \text{ and } \quad \wti{i}(M) = -\langle h_i, \nu
\rangle. 
\end{equation}
Let $\Irr R$ be the set of isomorphism
classes of simple $R$-modules and $\Irr R^\Lambda$ be the set of
isomorphism classes of simple
modules in $\rep{\Lambda}$.
In \cite{LV11}
it was shown that the tuple  $(\Irr R, \ep{i}, \p_i, \etil{i},
\ftil{i},  \wt)$ defines a crystal isomorphic to $B(\infty)$ and
$(\Irr R^\Lambda, \ep{i}, \phcyc{}{i}, \etil{i}, \ftil{i},  \wt)$
defines a crystal isomorphic to the highest weight crystal
$B(\Lambda)$.

\subsection{Interaction of $\pr{}$ and induction}
\label{sec-pr-ind}

The following is a list of useful facts about
the way that the functor $\pr{}$ interacts with the
functor of induction.

\begin{proposition} \label{pr-facts}
Fix $\Lambda \in P^+$, 
let $\mu, \nu \in Q^+$, $M$ be a simple
$R(\mu)$-module and $N$ a simple $R(\nu)$-module.
\begin{enumerate} [(a)] \item \label{part-1-pr-facts} If
$\pr{}M = \zero$ then $\pr{} \ind M \boxtimes N = \zero$. 
\item \label{vanishing-pr-prop}
If $\pr{}\ind M \boxtimes \Lii{i^c} =
\zero$ and $\epch{i}(N) \ge c$ then $\pr{} \ind M \boxtimes N = \zero$.
\item \label{pr-fact-5} If $c > \phcyc{}{i}(M)$ then $\pr{}\ind M
\boxtimes \Lii{i^c} = \zero$.
\item \label{f-and-pr} Let $\p = \p_i^\Lambda (M)$, then $\pr{}\ind M \boxtimes \Lii{i^\p} \cong \ftil{i}^\p M$.
\item \label{pr-fact-4} If $\pr{}C = M$ then $\pr{}\ind C \boxtimes N \cong \pr{}\ind M \boxtimes N$
\end{enumerate}
\end{proposition}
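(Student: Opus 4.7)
My plan is to proceed in the order (a), (e), (b), (c), (d). The unifying observation is that $\cycloI{\Lambda}$ is generated by elements $x_1^{\lambda_{i_1}}1_{\und{i}}$ supported on the leftmost strand, so under the parabolic embedding $R(\mu)\otimes R(\nu)\hookrightarrow R(\mu+\nu)$ the generators of $\cycloI{\Lambda}_\mu$ push forward into $\cycloI{\Lambda}_{\mu+\nu}$.

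For (a), I note that the argument actually works for any (not necessarily simple) $R(\mu)$-module $M$, a flexibility I will need later. If $\pr{}M=\zero$ then $\cycloI{\Lambda}_\mu M=M$, so every $m\in M$ can be written as $\sum_j r'_j m_j$ with $r'_j\in\cycloI{\Lambda}_\mu$. A typical basis element $r\otimes m\otimes n\in\ind M\boxtimes N$ then equals $\sum_j (rr'_j)\otimes m_j\otimes n$, and since each $r'_j$ maps into $\cycloI{\Lambda}_{\mu+\nu}$ this lies in $\cycloI{\Lambda}_{\mu+\nu}(\ind M\boxtimes N)$, giving $\pr{}\ind M\boxtimes N=\zero$. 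For (e), I apply the exact functor $\ind(-)\boxtimes N$ to the short exact sequence $0\to\cycloI{\Lambda}_\mu C\to C\to M\to 0$; the same strand-pushing argument shows that the image of $\ind\cycloI{\Lambda}_\mu C\boxtimes N$ inside $\ind C\boxtimes N$ is contained in $\cycloI{\Lambda}_{\mu+\nu}(\ind C\boxtimes N)$, so after the right-exact functor $\pr{}$ the leftmost term of the induced three-term sequence contributes nothing and the desired isomorphism $\pr{}\ind C\boxtimes N\cong\pr{}\ind M\boxtimes N$ falls out.

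Part (b) is deduced from (a): since $\epch{i}(N)\ge c$, the module $N':=\etilch{i}^c N$ is nonzero, and iterating the $\sigma$-twisted version of Proposition~\ref{crystal-op-facts}(\ref{cosoc-is-simple}) gives $N\cong\ftilch{i}^c N'=\cosoc\ind\Lii{i^c}\boxtimes N'$, hence a surjection $\ind(\ind M\boxtimes\Lii{i^c})\boxtimes N'\twoheadrightarrow\ind M\boxtimes N$; applying (a) to the (possibly non-simple) module $\ind M\boxtimes\Lii{i^c}$ kills the source and therefore the target. For (c), if $\pr{}\ind M\boxtimes\Lii{i^c}$ were nonzero, any simple quotient would be forced to equal the unique cosocle $\ftil{i}^c M$ of $\ind M\boxtimes\Lii{i^c}$ by Proposition~\ref{crystal-op-facts}(\ref{cosoc-is-simple}), whence $\ftil{i}^c M\in\rep{\Lambda}$, contradicting Proposition~\ref{interp-of-phcyc} combined with the hypothesis $c>\phcyc{}{i}(M)$.

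The most delicate step is (d). Let $X=\ind M\boxtimes\Lii{i^\p}$. Proposition~\ref{interp-of-phcyc} gives $\pr{}\ftil{i}^\p M\ne\zero$, so the cosocle $\ftil{i}^\p M$ lies in $\rep{\Lambda}$ and descends to a surjective quotient of $\pr{}X$. To upgrade this surjection to an isomorphism I will show that every other composition factor $K$ of $X$ satisfies $\pr{}K=\zero$: such a $K$ has $\wti{i}(K)=\wti{i}(M)-2\p$ and $\ep{i}(K)<\ep{i}(M)+\p$ by Proposition~\ref{crystal-op-facts}(\ref{all-comp-have-less-i}), so combining the defining identity $\p=\lambda_i+\ep{i}(M)+\wti{i}(M)$ with the jump formula \eqref{eqn-jump} together with $\jump{i}(K)\ge 0$ yields $\epch{i}(K)>\lambda_i$, whence $\pr{}K=\zero$ by Proposition~\ref{cyclotomic-char}. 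Since $\pr{}X$ lies in $\rep{\Lambda}$ all of its composition factors must also lie in $\rep{\Lambda}$, and since $\ftil{i}^\p M$ occurs with multiplicity one in $X$ (again Proposition~\ref{crystal-op-facts}(\ref{cosoc-is-simple})), we conclude $\pr{}X\cong\ftil{i}^\p M$. The main obstacle here is carefully tracking the jump numerology and connecting it back to the cyclotomic vanishing criterion of Proposition~\ref{cyclotomic-char}.
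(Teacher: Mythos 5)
Your proof is correct, and in part (a) you take a genuinely different (and stronger) route than the paper. The paper proves (a) via Frobenius reciprocity: it takes a simple quotient $Q$ of the putatively nonzero $\pr{}\ind M\boxtimes N$, uses Remark \ref{rem-char-epsilon} applied to the simple $Q$ to transfer $\epch{i}(M)>\lambda_i$ to $\epch{i}(Q)>\lambda_i$, and contradicts $Q\in\rep{\Lambda}$ --- an argument that genuinely requires $M$ and $N$ simple. You instead reuse the ideal-pushing observation (the parabolic embedding carries $\cycloI{\Lambda}_\mu$ into $\cycloI{\Lambda}_{\mu+\nu}$) that the paper itself deploys only in its proof of (e), and as a result your (a) holds for an arbitrary $R(\mu)$-module $M$. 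That extra generality is actually needed: the paper's own proof of (b) applies (a) with $M$ replaced by the typically non-simple module $\ind M\boxtimes\Lii{i^c}$, a step the paper's version of (a) does not literally cover, and your formulation closes that gap. Your (c) is the paper's argument with the details filled in, and your (d) is equivalent to the paper's but takes a slightly different bookkeeping route: the paper shows $\phcyc{}{i}(K)<0$ and concludes $\pr{}K=\zero$ from Proposition \ref{interp-of-phcyc}, while you translate this through the jump formula \eqref{eqn-jump} into $\epch{i}(K)>\lambda_i$ and invoke Proposition \ref{cyclotomic-char}. Your (e) matches the paper's ideal-pushing step but closes more economically --- you observe directly that $\pr{}$ annihilates the leftmost arrow of the three-term right-exact sequence, where the paper instead constructs the auxiliary surjection $g$ from the diagram \eqref{pr-diagram-1} and finishes with a dimension count.
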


\begin{proof} We let
\begin{equation}
\Lambda = \sum_{i \in I} \lambda_i \Lambda_i.
\end{equation}
\begin{enumerate} [(a)]

\item If $\pr{}M = \zero$, then by Proposition \ref{cyclotomic-char} there is some $i \in I$ such that $\epch{i}(M) > \lambda_i$. Suppose that $\pr{}\ind M \boxtimes N \neq \zero$. Then it has some
simple quotient $Q$, and there are surjections
\begin{equation} \label{double-surjection-to-simple}
\ind M \boxtimes N \twoheadrightarrow \pr{}\ind M \boxtimes N \twoheadrightarrow Q.
\end{equation}
Frobenius reciprocity
and Proposition \ref{cyclotomic-char} imply that $\pr{}Q = Q$. By Frobenius reciprocity $\res_{\mu,\nu}^{\mu + \nu}Q$ has $M \boxtimes N$ as a $(R(\mu) \otimes R(\nu))$-submodule. But Remark \ref{rem-char-epsilon} then implies $\epch{i}(Q) > \lambda_i$ so that $\pr{}Q = \zero$, a contradiction.
\\
\item If $\epch{i}(N) \ge c$ then there is a surjection,
\begin{equation}
\ind \Lii{i^c} \boxtimes (\etilch{i})^c N \twoheadrightarrow N
\end{equation}
and by the exactness of induction a surjection
\begin{equation}
\ind M \boxtimes \Lii{i^c} \boxtimes (\etilch{i})^c N
\twoheadrightarrow \ind M \boxtimes N.
\end{equation}
If $\pr{}\ind M \boxtimes \Lii{i^c} = \zero$, then by part
\eqref{part-1-pr-facts} above and the right exactness of $\pr{}$,
$\pr{} \ind M \boxtimes N = \zero$.  \\
\item
This follows from Proposition \ref{interp-of-phcyc} and 
the fact that
the induced module has unique simple quotient $\ftil{i}^c M$;
or see \cite{LV11}.
\item Consider the exact sequence,
\begin{equation}
\zero \rightarrow K \rightarrow \ind M \boxtimes L(i^{\p}) \rightarrow \ftil{i}^{\p}M \rightarrow \zero.
\end{equation}
$\ftil{i}^\p M$ is the unique composition factor of $\ind M \boxtimes
\Lii{i^\p}$ such that $\ep{i}(\ftil{i}^\p M) = \p + \ep{i}(M)$, so
$\ep{i}(D) < \p + \ep{i}(M)$ for all composition factors $D$ of $K$
by Proposition \ref{crystal-op-facts}.
All composition factors $D$ of $K$ have the same weight as $\ftil{i}^\p
M$.
By \eqref{phcyclo-formula} and Proposition \ref{crystal-op-facts},
$\phcyc{}{i}(D) = \lambda_i + \ep{i}(D) + \wti{i}(D) 
	< \lambda_i + \ep{i}(\ftil{i}^\p M) + \wti{i}(\ftil{i}^\p M)
	= \phcyc{}{i}(\ftil{i}^\p M) = 0$.
In particular this shows $\pr{}K=\zero$ so by the right exactness
of $\pr{}$ we get \eqref{f-and-pr}.

\item Consider the diagram in \eqref{pr-diagram-1},
\begin{equation} \label{pr-diagram-1}
\begin{tikzpicture}
\node at (0,0) {$\zero$};
\node at (0,1.5) {$\ind M \boxtimes N$};
\node at (0,3) {$\ind C \boxtimes N$};
\node at (0,4.5) {$\ind \cycloI{\Lambda}C \boxtimes N$};
\node at (0,6) {${\zero}$};
\node at (-6.5,3) {${\zero}$};
\node at (-3.5,3) {$\cycloI{\Lambda}(\ind C \boxtimes N)$};
\node at (3.5,3) {$\pr{}(\ind C \boxtimes N)$};
\node at (6.5,3) {$\zero$};
\node at (-.45,3.75) {$\alpha$};
\node at (1.4,3.3) {$\beta$};
\node at (2.2,4.3) {$\beta \circ \alpha$};
\node at (1.8,1.4) {$g$};
\node at (-.3,2.3) {$\gamma$};
\draw[thick,<-] (0,.3) -- (0,1.1);
\draw[thick,<-] (0,3.3) -- (0,4.1);
\draw[thick,<-] (0,1.8) -- (0,2.6);
\draw[thick,<-] (0,4.8) -- (0,5.6);
\draw[thick,->] (-6.1,3) -- (-4.8,3);
\draw[thick,->] (-2.1,3) -- (-1.1,3);
\draw[thick,->] (1.1,3) -- (2.1,3);
\draw[thick,->] (4.8,3) -- (6.1,3);
\draw[thick,->] (1.2,4.3) -- (2.5,3.4);
\draw[thick,dotted,->] (1.1,1.5) -- (2.5,2.5);
\end{tikzpicture}
\end{equation}
where the horizontal and vertical sequences are exact. Recall that
$\cycloI{\Lambda}_\mu$ in $R(\mu)$ is generated by the set
$\{x_1^{\lambda_{i_{1}}}1_{\und{i}}\}_{\und{i} \in \seq(\mu)}$ where
$\und{i} = i_1i_2 \dots i_m$ and $|\mu| = m$. Under the embedding
\begin{equation}
R(\mu) \hookrightarrow R(\mu) \otimes R(\nu) \hookrightarrow R(\mu + \nu),
\end{equation}
this set maps to the set 
\begin{equation}
\Big\{\sum_{\und{j} \in \seq(\nu)}x_1^{\lambda_{i_{1}}}1_{\und{ij}}\Big\}_{\und{i} \in \seq(\mu)}
\end{equation}
in $R(\mu + \nu)$. This set is contained in the ideal generated by $\{x_1^{\lambda_{i_{1}}}1_{\und{k}}\}_{\und{k} \in \seq(\mu + \nu)}$ which generates $\cycloI{\Lambda}_{\mu + \nu}$. It follows that
\begin{equation}
R(\mu + \nu)\cycloI{\Lambda}_\mu \subseteq \cycloI{\Lambda}_{\mu+\nu},
\end{equation}
and hence
\begin{equation}
\ind \cycloI{\Lambda}_\mu C \boxtimes N \subseteq \cycloI{\Lambda}_{\mu + \nu}(\ind C \boxtimes N).
\end{equation}
This tells us that the composition $\beta \circ \alpha$ from the
diagram in \eqref{pr-diagram-1} is zero, so there exists a
surjective homomorphism $g: \ind M \boxtimes N \rightarrow \pr{}\ind C
\boxtimes N$. Applying $\pr{}$ to the diagram 
\eqref{pr-diagram-1}, and denoting the resulting
maps from  $\gamma$, $\beta$, and $g$ as
$\tilde{\gamma}$, $\tilde{\beta}$, and $\tilde{g}$ respectively,
right exactness yields 
$\tilde{\gamma}$, $\tilde{\beta}$, and $\tilde{g}$ are surjections
as shown in \eqref{pr-diagram-2}. It follows from considerations
of dimension and that $\pr{}C=M$
 that $\tilde{g}$ must be an isomorphism.
\begin{equation}
\begin{tikzpicture} \label{pr-diagram-2}
\node at (0,1.5) {$\pr{}(\ind M \boxtimes N)$};
\node at (0,3) {$\pr{}(\ind C \boxtimes N)$};
\node at (3.5,3) {$\pr{}(\ind C \boxtimes N)$};
\node at (6.5,3) {$\zero$};
\node at (1.7,3.5) {$\tilde{\beta}$};
\node at (2.2,1.6) {$\tilde{g}$};
\node at (-.3,2.3) {$\tilde{\gamma}$};
\node at (0,0) {$\zero$};
\draw[thick,<-] (0,.3) -- (0,1.1);
\draw[thick,<-] (0,2) -- (0,2.6);
\draw[thick,->] (1.4,1.7) -- (2.5,2.5);
\draw[thick,->] (5,3) -- (6.1,3);
\draw[thick,->] (1.5,3) -- (2,3);
\end{tikzpicture}
\end{equation}
\end{enumerate}
\end{proof}

\subsection{Applying Proposition \ref{pr-facts} to $\Tii{i}{k}$}
We will frequently need to compute $\jump{j}$ for the 1-dimensional
``trivial" $R(\gammaplus{i}{k})$-module $\Tii{i}{k}$. When $k = 0$, we
compute
for the unit module that
 $\jump{j}(\UnitModule) = 0$ but
$\phcyc{i}{j}(\UnitModule) = \delta_{ij}$. When $k \geq 1$,
\begin{equation} \label{trig-weight}
\wti{j}(\Tii{i}{k}) = -\langle h_j, \gammaplus{k}{i} \rangle = \delta_{j,i-1} - \delta_{j,i} + \delta_{j,i+k} - \delta_{j,i+k-1}.
\end{equation}
Note that here as elsewhere, the indices $p,q$ in $\delta_{p,q}$ should be taken modulo $\ell$. Then,
\begin{equation} \label{jump-formula-triv}
\jump{j}(\T{i, i+1, \dots , i+k-1}) = \delta_{j,i-1} + \delta_{j,i+k}.
\end{equation}
Similarly,
\begin{equation} \label{phcyc-formula-triv}
\phcyc{i}{j}(\T{i,i+1, \dots , i+k-1}) = \delta_{j,i-1} + \delta_{j,i+k}.
\end{equation}

Here we record some useful facts concerning the way that the modules
$\Tii{0}{k}$ interact with the functors induction and $\pr{0}$ .
Notice that all these facts hold for $\Tii{i}{k}$ and
$\pr{i}$ after making obvious modifications.

\begin{proposition} \label{triv-pr-Lemma} \mbox{}
Fix $k \in \N$, $k> 0$.
\begin{enumerate} [1.]
	
\item \label{triv-pr-Lemma-1}
If $j \not\equiv -1, k $ then
\begin{math} \pr{0} \ind \T{0, 1, \dots, k-1} \boxtimes \Lii{j}
= \zero.  \end{math}

\item \label{triv-pr-Lemma-2}
 If $k \not\equiv -1 $ then
\begin{math} \pr{0} \ind \T{0,1,\dots,k-1} \boxtimes \Lii{k} \cong
\T{0 , 1 , \dots , k}.  \end{math}
	
\item \label{double-f}
If $k \not\equiv -1 $ then
\begin{math} 
\pr{0}\ind \T{0, 1, \dots, k-1} \boxtimes \Lii{k} \boxtimes \Lii{k} = \zero.  
\end{math}
	
\item \label{triv-pr-Lemma-4}
\begin{math}
\pr{0} \ind \Tii{0}{k} \boxtimes \Lii{-1} \cong
\Bigl(\ind \Tii{0}{k} \boxtimes \Lii{-1}\Bigr)
	\Big/\T{-1 , 0 , \dots , k-1}
\end{math} and further
	\begin{enumerate}

\item \label{k-neq-(-1)-part1}
If $k \not\equiv -1 $ then
$\pr{0} \ind \Tii{0}{k} \boxtimes \Lii{-1}$ is irreducible and
\begin{equation*}
\pr{0} \ind \Tii{0}{k} \boxtimes \Lii{-1} \boxtimes \Lii{-1} = \zero.
\end{equation*}

\item \label{k-eq-(-1)-part1}
If $k\equiv -1$, then
\begin{gather*} \label{double-f-1-a}
\pr{0}\ind \Tii{0}{k} \boxtimes \Lii{-1} \boxtimes \Lii{-1} \cong
\ftil{-1}^2 \Tii{0}{k} 
\\
\pr{0} \ind \Tii{0}{k} \boxtimes \Lii{-1} \boxtimes \Lii{-1} \boxtimes
\Lii{-1} = \zero.  
\end{gather*}
Further, if $k>1$ then 
$\pr{0} \ind \Tii{0}{k} \boxtimes \Lii{-1}$
has two composition factors. 
But if $k=1$ (so $\ell = 2$) then 
$\pr{0} \ind \Tii{0}{k} \boxtimes \Lii{-1} = \Ti(0,1)$
is irreducible.

\end{enumerate}
\end{enumerate}
\end{proposition}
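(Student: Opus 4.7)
My plan is to derive parts (1)--(3) and the ``higher-power'' vanishing statements in part (4) directly from Proposition \ref{pr-facts}, using the explicit values $\phcyc{0}{j}(\Tii{0}{k}) = \delta_{j,-1} + \delta_{j,k}$ from \eqref{phcyc-formula-triv}, and then to concentrate on identifying the submodule $\cycloI{\Lambda_0}M$ inside $M := \ind \Tii{0}{k}\boxtimes L(-1)$.

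Part (1) is part \ref{pr-fact-5} of Proposition \ref{pr-facts} with $c=1$, since $\phcyc{0}{j}(\Tii{0}{k}) = 0$ when $j\not\equiv -1, k$. For part (2), $\phcyc{0}{k}(\Tii{0}{k}) = 1$ when $k\not\equiv -1$, so part \ref{f-and-pr} of Proposition \ref{pr-facts} with $\p=1$ yields $\pr{0}\ind\Tii{0}{k}\boxtimes L(k)\cong\ftil{k}\Tii{0}{k}$; the identification $\ftil{k}\Tii{0}{k}\cong\Tii{0}{k+1}$ then follows because $\Tii{0}{k+1}\in\rep{\Lambda_0}$ by Proposition \ref{cyclotomic-char}, $\Delta_k\Tii{0}{k+1}\cong\Tii{0}{k}\boxtimes L(k)$ from the character, and Frobenius reciprocity provides a surjection $\ind\Tii{0}{k}\boxtimes L(k)\twoheadrightarrow\Tii{0}{k+1}$, which must coincide with the cosoc $\ftil{k}\Tii{0}{k}$. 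Part (3) is part \ref{pr-fact-5} with $c=2>1$ applied to $L(k^2)\cong L(k)\boxtimes L(k)$. The ``double-'' and ``triple-$L(-1)$'' statements in parts (4)(a), (4)(b) follow in the same way from parts \ref{pr-fact-5} and \ref{f-and-pr}, using $\phcyc{0}{-1}(\Tii{0}{k}) = 1$ if $k\not\equiv -1$ and $2$ if $k\equiv -1$.

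The substantive content of part (4) is the identification of $\cycloI{\Lambda_0}M$ with a copy of $\T{-1,0,\ldots,k-1}$. Using the weight basis $v_j := \psi_{j+1}\cdots\psi_k(t\otimes l)$ from \eqref{ind-basis} (where $t,l$ span $\Tii{0}{k},L(-1)$ and $v_j$ has weight $(0,\ldots,j-1,-1,j,\ldots,k-1)$), I first show $\cycloI{\Lambda_0}M = Rv_0$: only $v_0$ lies in a weight space whose sequence starts with $i_1 \neq 0$, while for $j\geq 1$ the element $x_1$ commutes past $\psi_{j+1},\ldots,\psi_k$ via \eqref{dot-past-crossing} (all indices $\geq 2$) and annihilates $t\otimes l$, so $x_1 v_j = 0$. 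I then show $Rv_0 = \MB{C}v_0$, which together with Proposition \ref{class-of-triv} forces $Rv_0\cong\T{-1,0,\ldots,k-1}$. For $r\geq 2$, $\psi_r v_0 = 0$ because $s_r(-1,0,\ldots,k-1)$ is not a shuffle of $(0,\ldots,k-1)$ with $(-1)$, so the target weight space of $M$ is zero. For $r=1$, relation \eqref{square-relation} rewrites $\psi_1 v_0 = \psi_1^2 v_1$ as a polynomial in $x_1, x_2$ applied to $v_1$, and \eqref{dot-past-crossing} lets these $x$'s be commuted through $\psi_2,\ldots,\psi_k$ to $t\otimes l$ where they vanish; the same approach handles $x_r v_0 = 0$.

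The main obstacle is controlling the correction terms from \eqref{dot-past-crossing} and \eqref{braid-relation} (and the form $\psi_r^2 = x_r^2 + x_{r+1}^2$ of \eqref{square-relation} when $\ell=2$), which are nonzero precisely when two adjacent entries of an intermediate weight coincide modulo $\ell$ --- a coincidence that can occur when $k\geq\ell$ due to residue collisions within $\{-1,0,\ldots,k-1\}\pmod{\ell}$. A finite case analysis of these coincidences, observing that the extra idempotent-supported terms either land in weight spaces of $M$ that vanish or are again annihilated by the $x$'s already handled, completes the verification. Finally, for part (4)(b) with $k>1$ and $k\equiv -1$: $\dim\pr{0}M = k$, its cosoc is $\ftil{-1}\Tii{0}{k}$ (the unique composition factor of $M$ with $\ep{-1}>0$, by part \ref{ftil-and-ep} of Proposition \ref{crystal-op-facts}), and a second composition factor is then forced by the dimension count and can be read off from the crystal model. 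In the degenerate case $k=1$, $\ell=2$, $\dim\pr{0}M = 1$ automatically forces simplicity, and a character check identifies $\pr{0}M\cong\T{0,1}$.
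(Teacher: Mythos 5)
Your handling of parts (1)--(3) and the ``double/triple-$L(-1)$'' vanishing statements in part (4) is the same as the paper's: both you and the paper compute $\phcyc{0}{j}(\Tii{0}{k}) = \delta_{j,-1}+\delta_{j,k}$ and apply parts \ref{pr-fact-5} and \ref{f-and-pr} of Proposition \ref{pr-facts}; your identification $\ftil{k}\Tii{0}{k}\cong\Tii{0}{k+1}$ via Frobenius reciprocity also matches the paper. The real divergence is in the identification of $\cycloI{\Lambda_0}M$ inside $M = \ind\Tii{0}{k}\boxtimes L(-1)$, where I think you have taken on more work than necessary and left a gap.

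Both you and the paper first compute the action of the ideal's generators on the weight basis $v_0,\ldots,v_k$ (your $v_j = \psi_{j+1}\cdots\psi_k(t\otimes l)$), concluding that the generators $1_{\und{p}}$ with $p_1\neq 0$ pick out only $v_0$ while the generators $x_1 1_{\und{j}}$ with $j_1=0$ annihilate every $v_r$ with $r\geq 1$ (via commuting $x_1$ past $\psi_{r+1},\ldots,\psi_k$, which have indices $\geq 2$). Up to here you agree. You then attempt to prove directly that $R(\nu)v_0 = \C v_0$ by computing $\psi_1 v_0 = \psi_1^2 v_1$ through \eqref{square-relation} and pushing the resulting $x_1, x_2$ to the right. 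The correction terms you flag are a genuine problem: for instance when $\ell = 3$, pushing $x_2$ past $\psi_3$ produces a correction $\psi_2 v_3$, and $s_2\und{q}_3 \equiv \und{q}_1 \pmod 3$, so this lands in the \emph{nonzero} weight space $\C v_1$ --- it does not simply vanish as your first alternative suggests, and whether the ``second alternative'' (annihilation by later $x$'s) covers every residue collision is exactly the finite case analysis you acknowledge but do not carry out. So as written, the claim $\psi_1 v_0 = 0$ (and similarly $x_r v_0=0$) is not established. The cleaner route, which is what the paper (implicitly) and most readers would use, avoids the KLR relations entirely: $1_{\und{p}}M$ is a one-dimensional graded weight space concentrated in a single degree, so $x_r v_0$ (degree shifted by $2$) must vanish; $\psi_r v_0$ lands in $1_{s_r\und{p}}M = 0$ for $r\geq 2$ (your weight-space argument is fine here); and $\psi_1 v_0 \in 1_{\und{q}_1}M = \C v_1$ but has degree $\deg v_1 + 2\deg(\psi_1 1_{\und{p}})\neq \deg v_1$, so it vanishes too. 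That gives $Rv_0 = \C v_0$ with no case analysis at all.

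One further gap, smaller: in 4(b) with $k>1$ you assert that $\pr{0}M$ has two composition factors ``forced by the dimension count.'' The dimension count ($\dim = k$, cosocle $\Tii{0}{k+1}$ is $1$-dimensional) only forces \emph{at least} two factors when $k > 1$; to conclude \emph{exactly} two, you need to know the remaining $(k-1)$-dimensional subquotient is irreducible. The paper obtains this from the Shuffle Lemma together with the Serre relations \eqref{Serre-relations}; ``read off from the crystal model'' is not a substitute for that argument.
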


\begin{proof} \mbox{}
Note the hypothesis $k > 0$ implies $\Tii{0}{k} \neq \UnitModule$,
i.e.\ $\nu \neq 0$.

Below we write $=$ for $\equiv \bmod \ell$ or equality in $I$.
\begin{enumerate}[1.] 

\item When $j \neq -1, k$, formula \eqref{phcyc-formula-triv} gives
\begin{equation}
\phcyc{0}{j}(\T{0, 1, \dots, k-1}) = 0.
\end{equation}
Hence by Proposition \ref{pr-facts}.\ref{pr-fact-5} 
\begin{equation}
\pr{0} \ind \T{0, 1, \dots, k-1} \boxtimes \Lii{j} = \zero.
\end{equation}

\item When $k \neq -1$, formula \eqref{phcyc-formula-triv} gives 
\begin{equation}
\phcyc{0}{k}(\T{0, 1, \dots, k-1}) = 1.
\end{equation}
Hence by Proposition \ref{pr-facts}.\ref{f-and-pr}
\begin{align}
\pr{0}\ind \T{0,1, \dots, k-1} \boxtimes \Lii{k} \cong \ftil{k}\T{0, 1, \dots, k-1} \\
\cong \T{0, 1, \dots, k-1,k},
\end{align}
where the second isomorphism holds by Frobenius reciprocity and the irreducibility of $\ftil{k} \Tii{0}{k}$.

\item As noted above
\begin{equation}
\phcyc{0}{k}(\T{0, \dots ,k-1}) = 1.
\end{equation}
Proposition \ref{pr-facts}.\ref{pr-fact-5} then implies,
\begin{equation}
\pr{0}\ind \T{0, \dots ,k-1} \boxtimes \Lii{k} \boxtimes \Lii{k} = \zero.
\end{equation}

\item Let $v \otimes u$ span the 1-dimensional module $\Tii{0}{k} \boxtimes \Lii{-1}$. Then as in \eqref{ind-basis}, \\$M := \ind \Tii{0}{k} \,\boxtimes \, \Lii{-1}$ has basis 
\begin{equation}
\{\; 1_{\und{i}} \otimes (v \otimes u), \; \psi_{k}1_{\und{i}} \otimes (v \otimes u), \; \ldots, \; \psi_1 \cdots \psi_{k-1} \psi_{k} 1_{\und{i}} \otimes (v \otimes u) \; \}
\end{equation}
where $\und{i} = (0,1,\dots,k-1,-1)$.

Recall that $\cycloI{\Lambda_0}$ is generated by $x_1 1_{\und{j}}$
where $j_1 = 0$ and by $1_{\und{p}}$ where $p_1 \neq 0$.

In particular, for $\und{p} = (-1,0,\dots,k-1)$ we see 
\begin{equation}
1_{\und{p}}\Big(\psi_1 \dots \psi_k 1_{\und{i}} \otimes (v \otimes u)\Big) = \psi_1 \dots \psi_k 1_{\und{i}} \otimes (v \otimes u) \in \cycloI{\Lambda_0} M,
\end{equation}
but $1_{\und{p}} (\psi_r \dots \psi_{k-1} \psi_{k} 1_{\und{i}} \otimes (v \otimes u)) = 0$ for $r >1$. Also note $1_{\und{j}} 1_{\und{p}} = 0$.
 We further calculate 
\begin{equation}
x_1 1_{\und{j}} \Big(\psi_r \dots \psi_r 1_{\und{i}} \otimes (v \otimes u)\Big) = 1_{\und{j}} \psi_r \dots \psi_k 1_{\und{i}} \otimes (x_1v \otimes u) = 0
\end{equation}
whenever $r > 1$. Hence $\cycloI{\Lambda_0} M$ is spanned by $\psi_1 \dots \psi_k 1_{\und{i}} \otimes (v \otimes u)$ and so $\pr{0}M = M \big/ \T{-1,0,\dots, k-1}$ as stated.
\begin{enumerate}[4(a)] 

\item Suppose $k \neq -1$. As $\phcyc{0}{-1}(\Tii{0}{k}) = 1$ by
\eqref{phcyc-formula-triv}, Proposition
\ref{pr-facts}.\ref{f-and-pr} tells us that $\pr{0}M = \ftil{-1}
\Tii{0}{k}$ is irreducible. In particular it has dimension $k$.
Using the Shuffle Lemma (along with the calculation of
$\cycloI{\Lambda_0}M$ above), one could 
easily compute its character.
By Proposition \ref{pr-facts}.\ref{pr-fact-5} 
we see
$\pr{0} \ind \Tii{0}{k} \boxtimes \Lii{-1} \boxtimes \Lii{-1} =
\zero.$

\item Next suppose
$ k = -1$ (i.e. $k \equiv -1  \bmod \ell$)
and
$k > 1$ (when considering $k \in \N$).
Then \eqref{phcyc-formula-triv} yields $\phcyc{0}{-1}(\Tii{0}{k}) = 2$
so Proposition \ref{pr-facts}.\ref{f-and-pr} immediately gives
\eqref{double-f-1-a}. 
Further, it is easy to
see $\ftil{-1}\Tii{0}{k} = \ftil{k} \Tii{0}{k} = \Tii{0}{k+1}$ which is
1-dimensional so from above $\pr{0}M$ has at least 2 composition
factors when $k > 1$. Next, the Shuffle Lemma and Serre relations
\eqref{Serre-relations} tell us the $(k-1)$-dimensional subquotient,
corresponding to the span of
\begin{equation}
\{ \psi_k \otimes (v \otimes u), \ldots,  \psi_2 \cdots \psi_k
\otimes (v \otimes u) \}, \end{equation}
is irreducible. 

The remaining case is
$k \equiv -1 \bmod \ell$,
$k =1$,
forcing $\ell = 2$,
and also $\Tii{0}{k} = \Lii{0}$.
As above $\phcyc{0}{-1}(\Tii{0}{1}) = 2$, yielding
\eqref{double-f-1-a}. 
The only difference is that $\pr{0}M$ has only 1 composition factor (namely $\Tii{0}{k+1}$) as
 it is only 1-dimensional.

\end{enumerate}
\end{enumerate}
\end{proof}

%
\section{Main theorems}
\label{sec-main}
%

As remarked in Section \ref{sec-modulecrystalmodel},
the graph with nodes corresponding to isomorphism classes
$[M]$ for $M$ a simple
$R^{\Lambda_i}(\nu)$-module
and
arrows $[\etil{j} M] \xrightarrow{j} [M]$
\omitt{, and additional crystal data $\ep{j}, \phcyc{i}{j},
\wt[M] = -\nu$,}
is the crystal graph $B(\Lambda_i)$.
We can also
use  $\ell$-restricted partitions $\lambda$
\omitt{, i.e.  $\lambda_k - \lambda_{k-1} < \ell$.}
to label the  nodes of $B(\Lambda_i)$ as $[M^\lambda]$.
The main theorems show
for the isomorphism
$\crystalmap:B(\Lambda_{i})
\xrightarrow{\simeq}
 \Bp \otimes B(\Lambda_{i-1})$
that
\begin{tikzpicture}[baseline=-2pt] 
	\node at (0,0) {\;\;\scriptsize{$k\!-\!1\!+\!i$}\;\;}; 
        \draw (0,0) ellipse (.48cm and .26cm);
\end{tikzpicture}$ \; \otimes\; \mu = \crystalmap(\lambda)$
corresponds to
$$\ind \Tii{i}{k} \boxtimes [M^\mu] \twoheadrightarrow
 [M^\lambda]$$
for $k=r(M^\lambda)$ (defined below),
and that the crystal operators commute
with this surjection in the appropriate manner.

Another way to view the theorems is that they give a module-theoretic
construction of $\crystalmap$ and justify it is an isomorphism
of crystals. 

\begin{theorem} \label{exist-theorem}
Let $A$ be a simple
$R (\nu)$-module in $\rep{\Lambda_i}$
with $|\nu| \geq 1$. 
\begin{enumerate}

\item \label{exist-theorem-1}
There exists $k \in \N, k {\geq 1}$
such that $\etilch{i+k-1} \dots \etilch{i+1} \etilch{i} A$ is a simple
$R(\nu - \gammaplus{i}{k})$-module
in $\rep{\Lambda_{i-1}}$.

\item Let 
$$r(A) = k$$
 be the minimal $k$ such that statement \eqref{exist-theorem-1}
holds and let
$$\Rcal{A} = \etilch{i+k-1} \dots \etilch{i+1}
\etilch{i} A.$$
Then there exists a surjection \begin{equation}
\label{exist-theorem-2-surj}
\pr{i}\ind \T{i,i+1, \dots, i+k-1} \boxtimes \Rcal{A} \twoheadrightarrow A.
\end{equation}
\end{enumerate}
\end{theorem}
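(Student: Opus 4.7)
My plan is to first establish Part~1 (existence of $k$) and then build the required surjection in Part~2 via an iterated cosocle construction, finally applying $\pr{i}$.

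\emph{Part~1.} Since $A\in\rep{\Lambda_i}$ is simple with $|\nu|\ge 1$, Proposition~\ref{cyclotomic-char} gives $\epch{j}(A)\le\delta_{ij}$; since every character in $\ch(A)$ has some leading letter $a$ necessarily satisfying $\epch{a}(A)\ge 1$, we are forced to $a=i$ and $\epch{i}(A)=1$. Hence $B_1:=\etilch{i}A$ is a nonzero simple module. Iteratively define $B_j := \etilch{i+j-1}B_{j-1}$ whenever nonzero; termination is automatic since $|\nu|$ strictly decreases. The core step is a dichotomy lemma, proved by induction on $j$ using Lemma~\ref{commuting-functors}, Remark~\ref{ep-and-ftil-for-i-neq-j}, and their $\sigma$-symmetric analogs: at each stage, either $B_j\in\rep{\Lambda_{i-1}}$ (so the iteration halts), or $\epch{i+j}(B_j)=1$ and $\epch{a}(B_j)=0$ for $a\notin\{i-1,i+j\}$ with $\epch{i-1}(B_j)\le 1$ (so the next application is legitimate). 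The first $j$ achieving the first alternative provides $k$.

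\emph{Part~2.} Iterating Proposition~\ref{crystal-op-facts}.\ref{e-and-f-undo-eachother} gives $A = \ftilch{i}\ftilch{i+1}\cdots\ftilch{i+k-1}\Rcal{A}$. Unwinding each $\ftilch{j}N=\cosoc\ind\Lii{j}\boxtimes N$ via transitivity of induction produces a surjection
\begin{equation*}
\phi:\ \ind\Lii{i}\boxtimes\Lii{i+1}\boxtimes\cdots\boxtimes\Lii{i+k-1}\boxtimes\Rcal{A}\twoheadrightarrow A.
\end{equation*}
An identical argument inside $R(\gammaplus{i}{k})$, with $\Rcal{A}$ replaced by $\UnitModule$, gives $\Tii{i}{k}=\cosoc\ind\Lii{i}\boxtimes\cdots\boxtimes\Lii{i+k-1}$; inducing to $R(\nu)$ with $\Rcal{A}$ on the right then yields
\begin{equation*}
\tilde\psi:\ \ind\Lii{i}\boxtimes\Lii{i+1}\boxtimes\cdots\boxtimes\Lii{i+k-1}\boxtimes\Rcal{A}\twoheadrightarrow\ind\Tii{i}{k}\boxtimes\Rcal{A}.
\end{equation*}

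I would next show that $\phi$ factors through $\tilde\psi$, producing a surjection $\ind\Tii{i}{k}\boxtimes\Rcal{A}\twoheadrightarrow A$; Remark~\ref{sujrection-onto-pr-remark} ($\pr{i}A=A$) then delivers the stated $\pr{i}\ind\Tii{i}{k}\boxtimes\Rcal{A}\twoheadrightarrow A$. The factorization reduces, since $A$ is simple, to excluding a nonzero map $\ind K\boxtimes\Rcal{A}\to A$ for $K:=\ker(\ind\Lii{i}\boxtimes\cdots\boxtimes\Lii{i+k-1}\twoheadrightarrow\Tii{i}{k})$; by Frobenius reciprocity this becomes excluding an embedding $D_0\boxtimes\Rcal{A}\hookrightarrow\res^{R(\nu)}_{R(\gammaplus{i}{k})\otimes R(\nu-\gammaplus{i}{k})}A$ for any simple submodule $D_0$ of $K$. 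The iterated $\etilch{}$-construction of $\Rcal{A}$ identifies the relevant socle component as $\Tii{i}{k}\boxtimes\Rcal{A}$ (by Proposition~\ref{class-of-triv}, $\Tii{i}{k}$ is the only one-dimensional module with the ascending character $[i,i+1,\dots,i+k-1]$), ruling out $D_0\not\cong\Tii{i}{k}$; the constraint $A\in\rep{\Lambda_i}$ further eliminates any $D_0$ whose character support begins with any letter other than $i$ (such as $\Sii{i+k-1}{k}$).

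\emph{Main obstacle.} The hardest step is the dichotomy lemma in Part~1, which controls how the $\epch{a}$-invariants propagate through iterated $\etilch{}$ and reflects the crystal isomorphism $\crystalmap$ at the module level. In Part~2 the delicate point is the socle-structure analysis excluding $D_0\not\cong\Tii{i}{k}$, especially in the regime $k\ge\ell$ where $R(\gammaplus{i}{k})$ admits additional simples in $\rep{\Lambda_i}$ that might a priori appear inside $K$; verifying that the $\Rcal{A}$-component of the restriction pins down the first factor to $\Tii{i}{k}$ is the essential technical input.
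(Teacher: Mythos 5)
Your proposed route is genuinely different from the paper's, and the key step of Part~1 — the ``dichotomy lemma'' — has a real gap that the cited tools do not close.

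\textbf{On Part~1.} You claim that Lemma~\ref{commuting-functors}, Remark~\ref{ep-and-ftil-for-i-neq-j}, and their $\sigma$-symmetric analogs yield, by induction, that $B_j\in\rep{\Lambda_{i+j}+\Lambda_{i-1}}$. But those results only control \emph{mixed} pairs: $\etilch{a}$ against $\ftil{b}$, or $\etil{a}$ against $\ftilch{b}$, for $a\ne b$. To bound $\epch{a}(B_j)=\epch{a}(\etilch{i+j-1}B_{j-1})$ from above you would need a statement about $\etilch{a}$ versus $\ftilch{b}$ (two operators of the \emph{same} $\sigma$-type), and these do not commute in $B(\infty)$ for $a\ne b$ in general, nor is such a commutation claimed anywhere in the paper. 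Without that, the inductive step gives no upper bound on $\epch{a}(B_j)$, and the dichotomy does not follow. The paper sidesteps this entirely: it never tries to bound the $\epch{a}$-invariants of $\Rcalj{t}{A}$ by crystal bookkeeping. Instead, it carries the surjection $\pr{i}\ind\Tii{i}{t}\boxtimes\Rcalj{t}{A}\twoheadrightarrow A$ as the inductive invariant and extracts the bound $\Rcalj{t}{A}\in\rep{\Lambda_{i+t}+\Lambda_{i-1}}$ \emph{from the existence of that surjection}, via Proposition~\ref{pr-facts}.\ref{vanishing-pr-prop} combined with Proposition~\ref{triv-pr-Lemma}. In other words, Parts~1 and~2 cannot be decoupled the way you propose: the module-theoretic information (the surjection) is what forces the $\epch{}$-bounds, not the other way around.

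\textbf{On Part~2.} Your factoring strategy (showing $\phi$ kills $\ind K\boxtimes\Rcal{A}$ where $K=\ker(\ind L(i)\boxtimes\cdots\boxtimes L(i+k-1)\twoheadrightarrow\Tii{i}{k})$) is a legitimately different tactic from the paper's step-by-step construction, but as you acknowledge, the crucial exclusion ``$D_0\not\cong\Tii{i}{k}$ impossible'' is only sketched. Observing that $D_0\in\rep{\Lambda_i}$ (because its characters are prefixes of characters of $A$) does rule out $\Sii{i+k-1}{k}$ and anything with $\epch{a}(D_0)>\delta_{a,i}$, but for $k\ge\ell$ the classification of simples of $R(\gammaplus{i}{k})$ in $\rep{\Lambda_i}$ is not reduced to $\Tii{i}{k}$ alone, and you give no argument that the $\Rcal{A}$-component of the restriction forces the remaining choice. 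The paper avoids this by never forming the big induced module at all: it grows the trivial module one letter at a time, applying $\pr{i}$ at each step and invoking Proposition~\ref{pr-facts}.\ref{pr-fact-4} together with Proposition~\ref{triv-pr-Lemma}.\ref{triv-pr-Lemma-2} (or the special analysis for $t\equiv-1$) to replace $\pr{i}\ind\Tii{i}{t}\boxtimes L(i+t)\boxtimes\Rcalj{t+1}{A}$ by $\pr{i}\ind\Tii{i}{t+1}\boxtimes\Rcalj{t+1}{A}$. That one-step replacement is where the trivial-module structure is enforced, and it is much easier to control than the global kernel $K$.

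In short: the paper proves a single statement by induction on $t$ — simultaneously the surjection and the cyclotomic bound on $\Rcalj{t}{A}$ — each feeding the other. Your plan proves them serially, and the serial Part~1 is missing the mechanism that makes it true.
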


\begin{proof}
For ease of exposition, we set $i = 0$ in the proof.
For $t \in \N$ set $\Rcalj{0}{A} = A$ and
let
\begin{equation} \label{def-Rt}
\Rcalj{t}{A} = \etilch{t-1} \dots \etilch{1} \etilch{0} A.
\end{equation}
We show by induction on $t \leq r(A)$ that $\Rcalj{t}{A} \in \rep{\Lambda_t + \Lambda_{-1}}$ and there exists a surjection 
\begin{equation} \label{inductive-hypothesis}
\ind \T{0,1, \dots, t-1} \boxtimes \Rcalj{t}{A} \twoheadrightarrow A.
\end{equation}

In the base case $t = 0$, $\Rcalj{0}{A} = A$. If $|\nu| = 1$, then $A = \Lii{0}$ and $\etilch{0}\Lii{0} \cong \UnitModule \in \rep{\Lambda_{-1}}$, so $r(A) = 1$. The existence of the surjection in this case is vacuous. Assume that $|\nu| > 1$. Then $\Rcalj{1}{A} = \etilch{0}A \neq \zero, \UnitModule$. By Proposition \ref{crystal-op-facts}.\ref{ftil-and-ep} there is a surjection
\begin{equation} 
\ind \Lii{0} \boxtimes \Rcalj{1}{A} \twoheadrightarrow A.
\end{equation}

It follows directly
from Proposition \ref{triv-pr-Lemma} and Proposition
\ref{pr-facts}.\ref{vanishing-pr-prop} that
if $\pr{0}\ind \Tii{0}{t} \boxtimes D \twoheadrightarrow A$ and $t \geq
1$ then $D \in \rep{\Lambda_t + \Lambda_{-1}}$.

In more detail, Proposition \ref{cyclotomic-char} implies $D \in \rep{\Lambda}$ where $\Lambda = \sum \epch{i}(D) \Lambda_i \in P^+$. Proposition \ref{pr-facts}.\ref{vanishing-pr-prop} tells us $\pr{0} \ind \Tii{0}{t} \boxtimes D \neq 0$ implies $\pr{0} \ind \Tii{0}{t} \boxtimes \Lii{i^{\epch{i}(D)}} \neq 0$. Thus for $i \neq -1,t$ we have $\epch{i}(D) = 0$ by Proposition \ref{triv-pr-Lemma}.\ref{triv-pr-Lemma-1}.

If $t \neq -1$, Proposition \ref{triv-pr-Lemma}.\ref{k-neq-(-1)-part1} implies $\epch{-1}(D) \leq 1$ and Proposition \ref{triv-pr-Lemma}.\ref{double-f} implies $\epch{t}(D) \leq 1$ so $D \in \rep{\Lambda_t + \Lambda_{-1}}$. If $t = -1$ then Proposition \ref{triv-pr-Lemma}.\ref{k-eq-(-1)-part1} implies $\epch{-1}(D) \leq 2$ so $D \in \rep{2\Lambda_{-1}} = \rep{\Lambda_t + \Lambda_{-1}}$.

Since $\pr{0}A = A$, observe any surjection $M \twoheadrightarrow A$ factors through $M \twoheadrightarrow \pr{0}M \twoheadrightarrow A$. Assume our inductive hypothesis \eqref{inductive-hypothesis} holds. Then from above, $\Rcalj{t}{A} \in \rep{\Lambda_t + \Lambda_{-1}}$. If in fact $\Rcalj{t}{A} \in \rep{\Lambda_{-1}} $then we are done (and $t \geq r(A)$). If not, then $\Rcalj{t+1}{A} = \etilch{t} \Rcalj{t}{A} \neq 0$. 

Transitivity and exactness of induction give us a surjection
\begin{equation} \label{surj-for-trivial}
\ind \T{0, \dots, t-1} \boxtimes \Lii{t} \boxtimes \Rcalj{t+1}{A} \twoheadrightarrow A.
\end{equation}

In the first case, suppose $t \neq -1$. Then Proposition \ref{pr-facts}.\ref{pr-fact-4} and Proposition \ref{triv-pr-Lemma}.\ref{triv-pr-Lemma-2} imply 
\begin{equation} 
\pr{0} \ind \Tii{0}{t} \boxtimes \Lii{t} \boxtimes \Rcalj{t+1}{A} \cong \pr{0} \ind \Tii{0}{t+1} \boxtimes \Rcalj{t+1}{A}
\end{equation}
and we get
\begin{equation}
\pr{0} \ind \T{0, \dots, t-1,t} \boxtimes \Rcalj{t+1}{A} \twoheadrightarrow A.
\end{equation}
In the case $t = -1$ then by the inductive hypothesis $\Rcalj{t}{A} \in
\rep{2\Lambda_{-1}}$ and $\Rcalj{t}{A} \notin \rep{\Lambda_{-1}}$ as we
are assuming $t < r(A)$. Thus $\epch{-1}(\etilch{-1} \Rcalj{t}{A}) =
\epch{-1}(\Rcalj{t+1}{A}) = 1$. If $K$ is any composition factor of
$\ind \Tii{0}{t} \boxtimes \Lii{-1}$ other than $\Tii{0}{t+1}$ then
$\phcyc{0}{-1}(K) \leq 0$ by \eqref{phcyclo-formula} so $\pr{0} \ind K
\boxtimes \Lii{-1} = \zero$ which implies $\pr{0} \ind K \boxtimes
\Rcalj{t+1}{A} = \zero$. So \eqref{surj-for-trivial} must factor through
\begin{equation}
\ind \T{0, \dots, -2,-1} \boxtimes \Rcalj{t+1}{A} \twoheadrightarrow A.
\end{equation}
This completes the induction.

 We take $r(A)$ to be the
smallest $k$ such that $\Rcalj{k}{A} \in \rep{\Lambda_{-1}}$. Note that
the process above must terminate as $r(A) \leq |\nu|$. In fact, in the
case $r(A) = |\nu|$ we must have $A = \Tii{0}{|\nu|}$ and
$\Rcalj{|\nu|}{A} = \Rcal{A} = \UnitModule \in \rep{\Lambda_{-1}}$. 
\end{proof}


By considering sign in place of trivial modules,
a very similar proof yields the following theorem.

\begin{theorem}
\label{thm-sign-main}
Let $A$ be a simple
$R (\nu)$-module in $\rep{\Lambda_i}$
with $|\nu| \geq 1$.
\begin{enumerate}
\item \label{sign-existence-thm}
There exists $k \in \N, k {\geq 1}$
such that $\zero \neq \etilch{i-k+1} \dots \etilch{i-1} \etilch{i} A$
is a simple
$R(\nu - \gammaminus{i}{k})$-module in $\rep{\Lambda_{i+1}}$.
 \item Let $c(A) = k$
be the minimal $k$ such that \eqref{sign-existence-thm} holds and
let $\Ccal{A} = \etilch{i-k+1} \dots \etilch{i-1} \etilch{i} A$. Then
there exists a surjection
\begin{equation}
\pr{i} \ind \Si{i,i-1, \dots, i-k+1} \boxtimes \Ccal{A} \twoheadrightarrow A.
\end{equation}
\end{enumerate}
\end{theorem}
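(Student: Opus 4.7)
The plan is to derive Theorem \ref{thm-sign-main} from Theorem \ref{exist-theorem} by invoking the Dynkin diagram automorphism $\omega \colon I \to I$, $j \mapsto -j \bmod \ell$, of $A^{(1)}_{\ell-1}$, which is an involution. Since $\omega$ preserves the Cartan matrix, it lifts to algebra isomorphisms $R(\nu) \xrightarrow{\sim} R(\omega\nu)$, where $\omega\nu := \sum_j \nu_j \alpha_{-j}$, given on generators by $1_{\und{i}} \mapsto 1_{\omega(\und{i})}$, $x_r \mapsto x_r$, and $\psi_r 1_{\und{i}} \mapsto \psi_r 1_{\omega(\und{i})}$; this restricts to $R^{\Lambda_i}(\nu) \xrightarrow{\sim} R^{\Lambda_{-i}}(\omega\nu)$. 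Pulling back modules through $\omega$ therefore yields an equivalence that interchanges $\Tii{j}{k} \leftrightarrow \Sii{-j}{k}$ (and correspondingly $\gammaplus{j}{k} \leftrightarrow \gammaminus{-j}{k}$), $\etilch{j} \leftrightarrow \etilch{-j}$, $\ftil{j} \leftrightarrow \ftil{-j}$, $\pr{i} \leftrightarrow \pr{-i}$, and $\rep{\Lambda_i} \leftrightarrow \rep{\Lambda_{-i}}$. Moreover this pullback commutes with induction, because $\omega$ is compatible with the parabolic embedding $R(\mu) \otimes R(\nu) \hookrightarrow R(\mu+\nu)$.

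Given a simple $A \in \rep{\Lambda_i}$ with $|\nu| \geq 1$, let $A^\omega$ denote the corresponding simple module in $\rep{\Lambda_{-i}}$ obtained by pullback. Applying Theorem \ref{exist-theorem} to $A^\omega$ produces a minimal $k := r(A^\omega) \geq 1$ such that
\begin{equation*}
\Rcal{A^\omega} = \etilch{-i+k-1} \cdots \etilch{-i+1}\, \etilch{-i}\, A^\omega
\end{equation*}
is a simple module in $\rep{\Lambda_{-i-1}}$, together with a surjection
\begin{equation*}
\pr{-i}\, \ind \T{-i, -i+1, \dots, -i+k-1} \boxtimes \Rcal{A^\omega} \twoheadrightarrow A^\omega.
\end{equation*}
Pulling back through $\omega$ then yields $c(A) = k$, $\Ccal{A} = (\Rcal{A^\omega})^\omega \in \rep{\Lambda_{i+1}}$, and the desired surjection $\pr{i}\, \ind \Si{i, i-1, \dots, i-k+1} \boxtimes \Ccal{A} \twoheadrightarrow A$. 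Minimality of $c(A)$ is inherited from minimality of $r(A^\omega)$ since pullback is an equivalence.

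Alternatively, one may mimic the proof of Theorem \ref{exist-theorem} verbatim, replacing trivial modules by sign modules and the increasing chain of residues $i, i+1, \dots$ by the decreasing chain $i, i-1, \dots$. This route requires first establishing a sign-module analogue of Proposition \ref{triv-pr-Lemma}, which in turn rests on the formulas $\jump{j}(\Sii{i}{k}) = \delta_{j,i+1} + \delta_{j,i-k}$ and $\phcyc{i}{j}(\Sii{i}{k}) = \delta_{j,i+1} + \delta_{j,i-k}$, both obtained from direct weight computations analogous to \eqref{trig-weight}--\eqref{phcyc-formula-triv}, combined with Proposition \ref{pr-facts}. The only subtle point is the degenerate case $i-k \equiv i+1 \bmod \ell$ (which forces $\ell \mid k+1$, including $\ell = 2$); but this mirrors precisely the degeneracy already handled in the proof of Proposition \ref{triv-pr-Lemma}, so no genuinely new difficulty arises, and the Dynkin-automorphism route remains the cleanest.
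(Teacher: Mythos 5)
Your proposal is correct, but your primary route is genuinely different from the paper's. The paper's proof is one line: \emph{``By considering sign in place of trivial modules, a very similar proof yields the following theorem''} --- i.e.\ exactly the mimicking route you describe as the alternative, which indeed requires the sign-module analogue of Proposition~\ref{triv-pr-Lemma} and the identities $\jump{j}(\Sii{i}{k}) = \phcyc{i}{j}(\Sii{i}{k}) = \delta_{j,i+1} + \delta_{j,i-k}$ (you have computed these correctly, and your identification of the single degenerate case $\ell \mid k+1$ is the right analogue of $k \equiv -1 \bmod \ell$ in Proposition~\ref{triv-pr-Lemma}). Your Dynkin-automorphism route is a real alternative: the cyclic diagram of $A^{(1)}_{\ell-1}$ has the involution $\omega\colon j \mapsto -j$, which preserves the Cartan matrix, lifts to an isomorphism $R(\nu) \xrightarrow{\sim} R(\omega\nu)$ exactly as you write (and one can check it commutes with $\sigma$, so $\etilch{j}$, $\etil{j}$, $\ftil{j}$ transport to their $-j$-indexed counterparts, and sends $\cycloI{\Lambda_i}_\nu$ to $\cycloI{\Lambda_{-i}}_{\omega\nu}$, so $\pr{i} \leftrightarrow \pr{-i}$). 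Pulling back Theorem~\ref{exist-theorem} for $A^\omega \in \rep{\Lambda_{-i}}$ gives Theorem~\ref{thm-sign-main} directly, with $c(A) = r(A^\omega)$ and $\Ccal{A} = (\Rcal{A^\omega})^\omega$, with no need to re-derive any of the supporting lemmas. This buys economy and systematizes the parallelism the paper only asserts; the paper's mimicking route, by contrast, keeps the argument self-contained within the already-developed machinery and avoids introducing the diagram automorphism as a new tool, at the cost of implicitly requiring the reader to re-run the Proposition~\ref{triv-pr-Lemma} computations.
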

\begin{conjecture}
With hypotheses as above,
\begin{gather*}
A = \cosoc \pr{i} \ind \Tii{i}{r(A)} \boxtimes \Rcal{A},
\\
A = \cosoc \pr{i} \ind \Sii{i}{c(A)} \boxtimes \Ccal{A}.
\end{gather*}
\end{conjecture}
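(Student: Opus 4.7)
My plan is to prove both statements in parallel; they are structurally analogous, so I focus on the first and note at the end the variations for the sign case. Set $M := \pr{i} \ind \Tii{i}{k} \boxtimes \Rcal{A}$ with $k = r(A)$. Theorem \ref{exist-theorem} already supplies a surjection $M \twoheadrightarrow A$, so it suffices to show that every simple quotient $B$ of $M$ satisfies $B \cong A$. My strategy is to strip off the operators $\etilch{i}, \etilch{i+1}, \ldots, \etilch{i+k-1}$ from $B$ one at a time, verifying at each step that the result is a simple quotient of a correspondingly shortened induced module; after $k$ steps we will have landed on $\Rcal{A}$, and inverting with $\ftilch$'s will force $B \cong A$.

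Precisely, I will establish the inductive claim that for $j = 0, 1, \ldots, k-1$, if $X$ is a simple quotient of $\ind \Tii{i+j}{k-j} \boxtimes \Rcal{A}$ with $\epch{i+j}(X) = 1$, then $\etilch{i+j} X$ is a nonzero simple quotient of $\ind \Tii{i+j+1}{k-j-1} \boxtimes \Rcal{A}$. To derive this, Frobenius reciprocity applied to $\ind \Tii{i+j}{k-j} \boxtimes \Rcal{A} \twoheadrightarrow X$ produces an embedding $\Tii{i+j}{k-j} \boxtimes \Rcal{A} \hookrightarrow \res X$; since $\Tii{i+j}{k-j}$ restricts (both sides being $1$-dimensional) to $\Lii{i+j} \boxtimes \Tii{i+j+1}{k-j-1}$, a further restriction yields $\Lii{i+j} \boxtimes \Tii{i+j+1}{k-j-1} \boxtimes \Rcal{A} \hookrightarrow \res X$. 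Combined with $\epch{i+j}(X) = 1$, the $\sigma$-symmetric version of Proposition \ref{crystal-op-facts}.\ref{e-etil-relationship} forces $\ech{i+j} X = \etilch{i+j} X$ to be simple, so cancelling the outer $\Lii{i+j}$ and re-applying Frobenius delivers the desired surjection $\ind \Tii{i+j+1}{k-j-1} \boxtimes \Rcal{A} \twoheadrightarrow \etilch{i+j} X$. Iterating from $X^{(0)} = B$ produces $X^{(k)} = \etilch{i+k-1} \cdots \etilch{i} B \cong \Rcal{A}$, since at $j = k$ the induced module collapses to $\ind \UnitModule \boxtimes \Rcal{A} = \Rcal{A}$. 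Applying Proposition \ref{crystal-op-facts}.\ref{e-and-f-undo-eachother} ($\sigma$-symmetrically) in reverse then identifies $B \cong \ftilch{i} \ftilch{i+1} \cdots \ftilch{i+k-1} \Rcal{A}$, which is precisely the expression obtained for $A$ by inverting $\Rcal{A} = \etilch{i+k-1} \cdots \etilch{i} A$; therefore $B \cong A$.

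The hard part will be verifying $\epch{i+j}(X^{(j)}) = 1$ at each stage. The lower bound $\geq 1$ is immediate from the embedding above. For the upper bound $\leq 1$, a character of $X^{(j)}$ beginning with two consecutive copies of $i+j$ would, via the Shuffle Lemma on $\ch(\ind \Tii{i+j}{k-j} \boxtimes \Rcal{A})$, force $\Rcal{A}$ to have a character starting with $i+j$; since $\Rcal{A} \in \rep{\Lambda_{i-1}}$ gives $\epch{m}(\Rcal{A}) \leq \delta_{m, i-1}$, this is impossible unless $i+j \equiv i-1 \pmod{\ell}$. The wrap-around case (which can occur when $k > \ell$) is genuinely delicate, and I expect it to require tracking the cyclotomic constraint $B \in \rep{\Lambda_i}$ through the full sequence $X^{(0)}, X^{(1)}, \ldots$, using a refinement analogous to the $k \equiv -1$ analysis in Proposition \ref{triv-pr-Lemma}.\ref{k-eq-(-1)-part1}. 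The sign version of the conjecture will follow by the entirely parallel argument, replacing $\Tii{i}{k}$ by $\Sii{i}{c(A)}$, $\Rcal{A}$ by $\Ccal{A}$, and applying $\etilch{i}, \etilch{i-1}, \ldots, \etilch{i-c+1}$ in descending order (corresponding to column-removal rather than row-removal).
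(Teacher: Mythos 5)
The statement you are attempting to prove is labeled a \emph{Conjecture} in the paper; the author explicitly does not prove it, so there is no proof in the paper against which to compare your argument. You are attempting to settle an open problem, and the write-up should be evaluated on its own merits.

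The overall strategy — showing every simple quotient $B$ of $\pr{i}\ind\Tii{i}{k}\boxtimes\Rcal{A}$ must coincide with $A$ by peeling off $\etilch{i},\etilch{i+1},\dots$ one at a time, using Frobenius reciprocity and the restriction $\res\,\Tii{i+j}{k-j}=\Lii{i+j}\boxtimes\Tii{i+j+1}{k-j-1}$ to pass to a shorter induced module — is structurally sound, and your Frobenius/cancellation step is correct \emph{provided} $\epch{i+j}(X^{(j)})=1$, the hypothesis needed to invoke the $\sigma$-twisted Proposition~\ref{crystal-op-facts}.\ref{e-etil-relationship}. You correctly obtain $\geq 1$ from the embedding and $\leq 1+\delta_{i+j,\,i-1}$ from the Shuffle Lemma. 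But when $j\equiv -1\bmod\ell$ — which happens for some $0<j<k$ exactly when $k=r(A)\geq\ell$ — this only gives $\leq 2$. The gap is not merely a verification you have deferred: the hypothesis $\epch{i+j}(X^{(j)})=1$ is actually \emph{false} at those $j$. The proof of Proposition~\ref{propn-level-two} shows that for $0<t<r(A)$ with $t\equiv -1\bmod\ell$ one has $\epch{i-1}(\Rcalj{t}{A})=2$ by minimality of $r(A)$; taking $B=A$ this is exactly $\epch{i+j}(X^{(j)})=2$. So your inductive step fails even in the case $B=A$, which it must handle. Tracking the cyclotomic constraint as you suggest yields $\Rcalj{t}{B}\in\rep{\Lambda_{i+t}+\Lambda_{i-1}}$, which is precisely the $\leq 2$ bound you already have, not a strengthening.

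To salvage the approach one would need to show that, at the wrap-around steps where $\epch{i-1}(X^{(j)})=2$, the embedded copy of $\Tii{i}{k-j-1}\boxtimes\Rcal{A}$ inside $\res\bigl(\ech{i-1}X^{(j)}\bigr)$ still lands inside $\res\bigl(\etilch{i-1}X^{(j)}\bigr)=\res\bigl(\soc\,\ech{i-1}X^{(j)}\bigr)$; equivalently, that the composite map to $\res\bigl(\ech{i-1}X^{(j)}/\etilch{i-1}X^{(j)}\bigr)$ vanishes. That is a genuinely new argument, not a refinement of the Shuffle Lemma bookkeeping, and I do not see how to produce it from the tools developed in the paper. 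This appears to be precisely the obstruction that led the author to leave the statement as a conjecture rather than a theorem.
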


\subsection{Relation to Specht modules}
A
Specht module for $\Sy{n}$ is constructed as a
submodule of the induction of a trivial module from
a Young subgroup $\Sy{\lambda}$ (this is one of our
$\Sy{P}$ as in  \eqref{eq-parabolic}).
Specht modules can also be constructed for the
Hecke algebra of type $A$ as in \cite{DJ}.
They are equipped with an integral form that allows
one to specialize the Specht modules over $\F_\ell$ 
in the former case, to an $\ell$-th root of unity in the latter.

When $\lambda$ is $\ell$-regular, the specialization
of the Specht module $\bar S^\lambda$  has unique
simple quotient $D^\lambda$.  In other words,
$D^\lambda$  is a subquotient of a module induced from a 1-dimensional
module.  Further
$\{D^\lambda \mid 
\lambda \vdash n, \lambda \text{ is $\ell$-regular}\}$
is a complete set of simple modules of $\F_\ell \Sy{n}$
or the finite
Hecke algebra at an $\ell$-th root of unity. 
The crystal structure on these simples by taking
socle of restriction  agrees with the
 model of $B(\Lambda_0)$ taking nodes to be $\ell$-regular 
partitions \cite{Klesh}.  
This is the model compatible with tensoring by
$\Bopp$. 
(See Section \ref{sec-crystal1}.)

Repeating the construction of Theorem \ref{thm-sign-main}
yields $D^\lambda$ as the quotient of a module induced
from a (possibly conjugate) 
parabolic subalgebra of shape $\lambda^T$, where
the module being induced is a (parabolic) sign module. 
In other words, the restriction of $D^\lambda$ to 
that  parabolic subalgebra 
contains a $\boxtimes$ of
sign modules $\Sii{i}{k}$.
On the other hand, in the construction of Specht modules for 
the finite Hecke algebra of type $A$ given in \cite{DJ},
the Specht module contains a special vector that is anti-symmetrized
according to a parabolic subalgebra of shape $\lambda^T$.
In other words, the same induced module that has $D^\lambda$
as a quotient also has a nonzero map to $S^\lambda$. 

In fact $\Q \otimes_\Z S^\lambda$ can be characterized
as the unique irreducible $\Q \Sy{n}$-module such that
$\Res_{\Sy{\lambda}}$ contains a trivial module and
$\Res_{\Sy{\lambda^T}}$ contains a sign module.

When $\ell$ is a root of unity (or we work over $\F_\ell$)
the difficulty is in specializing quotients of (induced)
modules.  The existence of a map from and induced sign
module to $D^\lambda$ is not a surprise, but the
result on how the crystal operators act is nontrivial.

\subsection{The action of crystal operators $\ftil{j}$ and $\etil{j}$}
\label{sec-ef}

Next we study the action of the crystal operators $\etil{j}$ and $\ftil{j}$ to show \eqref{exist-theorem-2-surj} categorifies our crystal isomorphism $\crystalmap$. 
We refer the reader back to Section \ref{sec-crystal1}.

Compare the theorems below with
\eqref{eq_ei_tensor} 
and
\eqref{eq_fi_tensor}.
As in \cite{LV11} simple modules correspond to nodes in
$B(\Lambda_i)$.  Each node of the perfect crystal
$\Bp$ (respectively $\Bopp$)
corresponds to a family of trivial (respectively sign) modules
$\Tii{i}{k + t\ell}, t \in \N$.
(However this does not give a categorification of $\Bp$ itself.)
It is in this manner that the main theorems of this paper
give a categorification of the crystal isomorphism $\crystalmap$
(resp. $\crystalmapopp$).

\begin{theorem} \label{thm-action-etil}
Let $A \in \rep{\Lambda_i}$ be simple.
Let $j \in I$ be such that $\etil{j}A \neq 0$, and let $k = r(A)$. Then there exists a surjection 
\begin{equation}
  \begin{array}{l l}
    \ind \Big(\etil{j} \Tii{i}{k} \boxtimes \Rcal{A}\Big) \twoheadrightarrow \etil{j}A & \quad \text{if   } \;\;\;\; \ep{j}(\Tii{i}{k} )> \phcyc{i-1}{j}(\Rcal{A})
\\
    \ind \Big(\Tii{i}{k} \boxtimes \etil{j}\Rcal{A}\Big) \twoheadrightarrow \etil{j}A & \quad \text{if  }\;\;\;\; \ep{j}(\Tii{i}{k}) \leq \phcyc{i-1}{j}(\Rcal{A}).
  \end{array} 
\end{equation}
\end{theorem}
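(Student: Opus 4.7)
The plan is to apply the exact restriction functor $\e{j}$ to the surjection $\pr{i}\ind \Tii{i}{k}\boxtimes\Rcal{A}\twoheadrightarrow A$ from Theorem \ref{exist-theorem} and identify $\etil{j}A=\soc\e{j}A$ as a simple quotient arising from the appropriate factor in a Mackey-type filtration. Note that by Proposition \ref{class-of-triv}, $\ep{j}(\Tii{i}{k})=\delta_{j,i+k-1}\in\{0,1\}$, and by Remark \ref{when-phcyc-jump-the-same}, $\phcyc{i-1}{j}(\Rcal{A})=\jump{j}(\Rcal{A})$ when $\Rcal{A}\neq\UnitModule$, so the case split of the theorem exactly mirrors the tensor product rule \eqref{eq_ei_tensor}. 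The unit case $\Rcal{A}=\UnitModule$ is handled directly using Proposition \ref{triv-pr-Lemma} and the explicit character of $A=\Tii{i}{k}$.

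For Case 1, the hypothesis $\ep{j}(\Tii{i}{k})>\phcyc{i-1}{j}(\Rcal{A})$ forces $j\equiv i+k-1\pmod\ell$ and $\phcyc{i-1}{j}(\Rcal{A})=0$, hence $\jump{j}(\Rcal{A})=0$. By Lemma \ref{jump-lemma}, $\ind\Rcal{A}\boxtimes \Lii{j}\cong\ind\Lii{j}\boxtimes\Rcal{A}$ is irreducible. Combining this commutativity with $\Tii{i}{k}\cong\ftil{j}\Tii{i}{k-1}=\cosoc(\ind\Tii{i}{k-1}\boxtimes\Lii{j})$ and setting $Y := \ind\etil{j}\Tii{i}{k}\boxtimes\Rcal{A}=\ind\Tii{i}{k-1}\boxtimes\Rcal{A}$, I will obtain the chain
\begin{equation*}
\ind Y\boxtimes\Lii{j}\cong\ind\Tii{i}{k-1}\boxtimes\Lii{j}\boxtimes\Rcal{A}\twoheadrightarrow\ind\Tii{i}{k}\boxtimes\Rcal{A}\twoheadrightarrow A.
\end{equation*}
To promote this to $Y\twoheadrightarrow\etil{j}A$, I will use Frobenius reciprocity: the surjection $\ind Y\boxtimes\Lii{j}\twoheadrightarrow A$ corresponds to a nonzero map $Y\boxtimes\Lii{j}\to\res A$ whose image has the form $Y'\boxtimes\Lii{j}$ with $Y'$ a quotient of $Y$; then Proposition \ref{crystal-op-facts}.\ref{cosoc-is-simple} and \ref{e-and-f-undo-eachother} identify the simple cosocle of the induced-from-$Y'$ module with $A$, forcing a simple quotient of $Y$ to be $\etil{j}A$.

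For Case 2, I aim to build $\ind\Tii{i}{k}\boxtimes\etil{j}\Rcal{A}\twoheadrightarrow\etil{j}A$. First I verify $\etil{j}\Rcal{A}\neq 0$: when $\ep{j}(\Tii{i}{k})=0$, the hypothesis $\etil{j}A\neq 0$ together with the Shuffle Lemma description of $\Char(A)$ as shuffles of $[i,i+1,\dots,i+k-1]$ with characters of $\Rcal{A}$ forces a $j$-tail to arise from $\Rcal{A}$; when $\ep{j}(\Tii{i}{k})=1$ and $\phcyc{i-1}{j}(\Rcal{A})\geq 1$, the jump formula \eqref{eqn-jump} together with Lemma \ref{jump-lemma}.\ref{jump-lemma-6} again gives $\ep{j}(\Rcal{A})\geq 1$. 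I then run the Case 1 strategy symmetrically: $\etil{j}\Rcal{A}\boxtimes\Lii{j}$ resolves $\Rcal{A}$ as a quotient of $\ind \etil{j}\Rcal{A}\boxtimes\Lii{j}$, and inducing from the left by $\Tii{i}{k}$ commutes trivially with this, yielding $\ind(\ind\Tii{i}{k}\boxtimes\etil{j}\Rcal{A})\boxtimes\Lii{j}\twoheadrightarrow A$; applying the same Frobenius/cosocle argument produces the desired surjection.

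The main obstacle is the final cosocle-extraction step in both cases: translating a surjection $\ind W\boxtimes\Lii{j}\twoheadrightarrow A$ into $W\twoheadrightarrow\etil{j}A$, when $W$ is not itself simple, requires combining Proposition \ref{crystal-op-facts}.\ref{cosoc-is-simple} (so $\cosoc\ind C\boxtimes\Lii{j}=\ftil{j}C$ for simple $C$) with \ref{e-and-f-undo-eachother} and a careful Frobenius adjunction argument to ensure that $\etil{j}A$ appears in $\cosoc W$ rather than merely as an interior composition factor. The remaining verifications (commuting induction with the trivial module $\Tii{i}{k}$ when $j\neq i+k-1$, and handling the $\Rcal{A}=\UnitModule$ boundary case) reduce to direct application of Proposition \ref{triv-pr-Lemma} and Proposition \ref{pr-facts}.
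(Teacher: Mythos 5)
The paper does not prove Theorem \ref{thm-action-etil} directly; it proves Theorem \ref{thm-action-ftil} (the $\ftil{j}$ version) by a detailed case analysis and records a strengthening — namely $r(\ftil{j}A)=r(A)$ and $\Rcalj{t}{\ftil{j}A}=\ftil{j}\Rcalj{t}{A}$ in the appropriate regime — from which the $\etil{j}$ version is deduced by substituting $A' = \etil{j}A$ (so $\ftil{j}A' = A$) and invoking Theorem \ref{exist-theorem} for $A'$ together with Proposition \ref{crystal-op-facts}.\ref{e-and-f-undo-eachother}. Your proposal instead attempts a direct proof, and that is where the difficulty you flag lies, and it is more serious than you indicate.

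The precise gap is in the ``cosocle-extraction'' step. Starting from a surjection $\ind Y\boxtimes\Lii{j}\twoheadrightarrow A$, Frobenius reciprocity gives a nonzero map $Y\boxtimes\Lii{j}\to\Delta_j A$, and the image is indeed $Y'\boxtimes\Lii{j}$ with $Y'$ a quotient of $Y$ and $Y'\subseteq\e{j}A$. But Proposition \ref{crystal-op-facts}.\ref{etil-is-only-special-compfactor} says $\etil{j}A=\soc(\e{j}A)$, so all you can conclude is $\etil{j}A\subseteq\soc Y'$, i.e.\ $\etil{j}A$ is a \emph{submodule} of a quotient of $Y$. The theorem needs $\etil{j}A$ to be a \emph{quotient} of $Y$. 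There is no reason a priori that a module $Y'$ sandwiched between $\etil{j}A$ and $\e{j}A$ has $\etil{j}A$ in its cosocle; it could have cosocle factors $D$ with $\ep{j}(D)<\ep{j}(\etil{j}A)$, and for these $\ftil{j}D\not\cong A$. Your appeal to \ref{crystal-op-facts}.\ref{cosoc-is-simple} does not help because $Y'$ is not simple, and \ref{crystal-op-facts}.\ref{e-and-f-undo-eachother} is precisely the simples-only dictionary that you cannot invoke until you already know which simple you are mapping onto.

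To actually close this gap along your lines one would have to show, independently, that $r(\etil{j}A)=k-1$ and $\Rcal{\etil{j}A}\cong\Rcal{A}$ in your Case~1 (and the analogous identity $\Rcal{\etil{j}A}\cong\etil{j}\Rcal{A}$, $r(\etil{j}A)=k$, in Case~2), and then simply apply Theorem \ref{exist-theorem} to $\etil{j}A$. But proving these $\Rcal{}$-compatibility statements is exactly the content of the induction on $\Rcalj{t}$ in the paper's proof of Theorem \ref{thm-action-ftil}, using Lemma \ref{lemma-Rjump} and Lemma \ref{commuting-functors}.\ref{commuting-f-same-i} to control when $\ftilch{}$ and $\etil{j}$ commute past each other. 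So the ``direct'' route is not actually a shortcut: the hard work is unavoidable, and the paper's ordering (prove the $\ftil{j}$ statement, where right-exactness of induction and $\cosoc$ line up naturally, then flip via the simples bijection) avoids the socle/cosocle mismatch entirely. The preliminary reductions in your proposal (identifying $\ep{j}(\Tii{i}{k})=\delta_{j,i+k-1}$, $\phcyc{i-1}{j}(\Rcal{A})=\jump{j}(\Rcal{A})$ via Remark \ref{when-phcyc-jump-the-same}, commuting $\Lii{j}$ past $\Rcal{A}$ when $\jump{j}(\Rcal{A})=0$ via Lemma \ref{jump-lemma}) are all fine, but they only set up the surjection $\ind Y\boxtimes\Lii{j}\twoheadrightarrow A$; the step from there to $Y\twoheadrightarrow\etil{j}A$ is the whole theorem and remains unproved.
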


\begin{theorem} \label{thm-action-ftil}
Let $A \in \rep{\Lambda_i}$ be simple.
Let $j \in I$ be such that $\pr{i} \ftil{j} A \neq 0$, and let $k=
r(A)$.  Then there exists a surjection
\begin{equation}
  \begin{array}{l l}
    \ind \Big(\ftil{j} \Tii{i}{k} \boxtimes \Rcal{A} \Big) \twoheadrightarrow \ftil{j}A & \quad \text{if} \;\;\;\; \ep{j}(\Tii{i}{k}) \geq \phcyc{i-1}{j}(\Rcal{A})
\\
    \ind \Big(\Tii{i}{k} \boxtimes \ftil{j}\Rcal{A}\Big) \twoheadrightarrow \ftil{j}A & \quad \text{if} \;\;\;\; \ep{j}(\Tii{i}{k}) < \phcyc{i-1}{j}(\Rcal{A}).
  \end{array} 
\end{equation}
\end{theorem}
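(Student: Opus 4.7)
The plan is to bootstrap from Theorem \ref{exist-theorem} by applying $\ftil{j}$ to the existence surjection and then showing that the result factors through the target dictated by the tensor product rule \eqref{eq_fi_tensor}. Starting from $\pr{i}\ind \Tii{i}{k} \boxtimes \Rcal{A} \twoheadrightarrow A$, induce with $\Lii{j}$ on the right and apply the right-exact functor $\pr{i}$; since by hypothesis $\pr{i}\ftil{j}A = \ftil{j}A \neq \zero$, the cosocle presentation $\ind A \boxtimes \Lii{j} \twoheadrightarrow \ftil{j}A$ from Proposition \ref{crystal-op-facts} descends to give the master surjection
\begin{equation*}
\pr{i}\ind \Tii{i}{k} \boxtimes \Rcal{A} \boxtimes \Lii{j} \twoheadrightarrow \ftil{j}A.
\end{equation*}
The remaining task is to show that this master surjection factors through the claimed target in each of the two cases.

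For Case 2 ($\ep{j}(\Tii{i}{k}) < \phcyc{i-1}{j}(\Rcal{A})$), the hypothesis forces $\phcyc{i-1}{j}(\Rcal{A}) \geq 1$, so the standard surjection $\ind \Rcal{A} \boxtimes \Lii{j} \twoheadrightarrow \ftil{j}\Rcal{A}$ is available. Inducing with $\Tii{i}{k}$ on the left produces the candidate factorization $\ind \Tii{i}{k} \boxtimes \Rcal{A} \boxtimes \Lii{j} \twoheadrightarrow \ind \Tii{i}{k} \boxtimes \ftil{j}\Rcal{A}$, and it suffices to check that the kernel $\ind \Tii{i}{k} \boxtimes K$, with $K = \ker(\ind \Rcal{A} \boxtimes \Lii{j} \twoheadrightarrow \ftil{j}\Rcal{A})$, is annihilated in $\ftil{j}A$. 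By Proposition \ref{crystal-op-facts}.\ref{all-comp-have-less-i}, every composition factor $L$ of $K$ satisfies $\ep{j}(L) \leq \ep{j}(\Rcal{A})$, and the Shuffle Lemma bounds $\ep{j}$ of each composition factor of $\ind \Tii{i}{k} \boxtimes L$ by $\ep{j}(\Tii{i}{k}) + \ep{j}(L)$. The crystal tensor rule gives $\ep{j}(\ftil{j}A) = \ep{j}(\Rcal{A}) + 1$, and the Case 2 inequality is precisely what separates $\ftil{j}A$ from these composition factors; the boundary sub-case $\ep{j}(\Tii{i}{k}) = 1$ will require the strict $\phcyc{i-1}{j}$-inequality together with $\epch{j}$-considerations via Lemma \ref{jump-lemma}.

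Case 1 ($\ep{j}(\Tii{i}{k}) \geq \phcyc{i-1}{j}(\Rcal{A})$) is subtler because in the master induced module $\Lii{j}$ sits on the far right of $\Rcal{A}$, whereas the target $\ind \ftil{j}\Tii{i}{k} \boxtimes \Rcal{A}$ wants it adjacent to $\Tii{i}{k}$. My plan is to obtain the desired surjection by a separate route: apply Theorem \ref{exist-theorem} directly to $\ftil{j}A$ to get a surjection $\pr{i}\ind \Tii{i}{k'} \boxtimes \Rcal{\ftil{j}A} \twoheadrightarrow \ftil{j}A$, then identify $\Rcal{\ftil{j}A}$ with $\Rcal{A}$ and show that the trivial factor $\Tii{i}{k'}$ effectively plays the role of the (generally non-trivial) cosocle $\ftil{j}\Tii{i}{k}$. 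The identification uses the commutation relations between $\ftil{j}$ and the $\etilch{m}$ provided by Lemma \ref{commuting-functors}, together with the $\jump$-analysis of the trivial modules given by \eqref{jump-formula-triv} and Lemma \ref{jump-lemma}.

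The main obstacle is the Case 1 analysis: the module $\ftil{j}\Tii{i}{k}$ is typically not trivial and need not lie in $\rep{\Lambda_i}$, so the toolkit of Proposition \ref{triv-pr-Lemma} is not directly applicable. One must verify that $\pr{i}\ind \ftil{j}\Tii{i}{k} \boxtimes \Rcal{A}$ is nonzero with $\ftil{j}A$ appearing as a simple quotient, likely via a bookkeeping argument in the spirit of Proposition \ref{triv-pr-Lemma}.\ref{triv-pr-Lemma-4} combined with the uniqueness of $\ftil{j}A$ as the simple quotient of maximal $\ep{j}$-value furnished by Proposition \ref{crystal-op-facts}.\ref{cosoc-is-simple} and \ref{all-comp-have-less-i}.
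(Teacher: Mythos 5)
Your overall plan — bootstrap the master surjection $\pr{i}\ind \Tii{i}{k}\boxtimes\Rcal{A}\boxtimes\Lii{j}\twoheadrightarrow\ftil{j}A$ and factor it according to the tensor product rule — is a reasonable starting point, but it diverges substantially from the paper's argument, and in the places where it diverges there are genuine gaps.

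The paper's proof is organized by the two values $\ep{j}(\Tii{i}{k})\in\{0,1\}$ and then by the values of $\jump{j}(\Rcal{A})$ (recall $\jump{j}(\Rcal{A})=\phcyc{i-1}{j}(\Rcal{A})$ when $\Rcal{A}\neq\UnitModule$). The decisive tool, which does not appear anywhere in your plan, is Lemma \ref{lemma-Rjump}: it gives $\jump{j}(\Rcalj{t}{A})$ for every $0\le t\le k$. Using it, the paper shows that almost every subcase of your Case~1 (the $\ge$ case) is vacuous: either $\jump{j}(A)=0$ so $\pr{i}\ftil{j}A=\zero$, or the subcase simply cannot occur (as in the paper's Case 2b). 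The only surviving $\ge$ subcase is $j\equiv i-1$ and $k\equiv -1 \bmod \ell$, and there $\ftil{j}\Tii{i}{k}=\Tii{i}{k+1}$ is again a trivial module. So your diagnosis of the ``main obstacle'' — that $\ftil{j}\Tii{i}{k}$ is typically not trivial and not in $\rep{\Lambda_i}$ — is aimed at a phantom: in the only configuration that actually produces a nonzero $\ftil{j}A$, the module is trivial, and the real work is the $\jump$-bookkeeping that rules the other configurations out, plus the use of Lemma \ref{jump-lemma} to commute $\Lii{-1}$ past $\Rcal{A}$ when $\jump{-1}(\Rcal{A})=0$.

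Your Case~2 ($<$) argument also has a gap. You need the composition factors of $\ind\Tii{i}{k}\boxtimes K$ (with $K=\ker(\ind\Rcal{A}\boxtimes\Lii{j}\twoheadrightarrow\ftil{j}\Rcal{A})$) to miss $\ftil{j}A$, and your bound via the Shuffle Lemma gives $\ep{j}\le\ep{j}(\Tii{i}{k})+\ep{j}(\Rcal{A})$ for those factors. To conclude you need $\ep{j}(\ftil{j}A)=\ep{j}(A)+1>\ep{j}(\Tii{i}{k})+\ep{j}(\Rcal{A})$, i.e.\ $\ep{j}(A)\ge\ep{j}(\Tii{i}{k})+\ep{j}(\Rcal{A})$. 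When $\ep{j}(\Tii{i}{k})=0$ this follows from Frobenius reciprocity and a character argument, but when $\ep{j}(\Tii{i}{k})=1$ (paper's Case 2c) it is not immediate that a shuffle placing $\Tii{i}{k}$'s terminal $j$ at the very end survives in $\ch(A)$; you have only inclusion $\ch(A)\subseteq\ch(\ind\Tii{i}{k}\boxtimes\Rcal{A})$. The paper avoids this entirely by proving a stronger statement: a downward induction on $t$ showing $\Rcalj{t}{\ftil{j}A}=\ftil{j}\Rcalj{t}{A}$, $r(\ftil{j}A)=r(A)$, and $\Rcal{\ftil{j}A}=\ftil{j}\Rcal{A}$. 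The induction step uses Lemma \ref{commuting-functors}.\ref{commuting-f-same-i} at indices $t\equiv j+1\bmod\ell$, and this requires knowing $\jump{j}(\Rcalj{t}{A})>1$ there — which is exactly what Lemma \ref{lemma-Rjump} supplies. In other words, the ``identify $\Rcal{\ftil{j}A}$ by commuting $\ftil{j}$ past the $\etilch{m}$'' strategy you propose for Case~1 is actually the engine the paper uses for your Case~2, and without Lemma \ref{lemma-Rjump} and the jump estimates you cannot run it.

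In short: the proposal identifies the right high-level structure (master surjection, tensor-rule dichotomy, Lemma \ref{commuting-functors}) but misses the central role of Lemma \ref{lemma-Rjump}, applies the commutation argument to the wrong case, leaves the $\ep{j}(\Tii{i}{k})=1$, $\phcyc{i-1}{j}(\Rcal{A})>1$ bound unjustified, and misidentifies the nature of the difficulty in the $\ge$ case.
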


Theorem \ref{thm-action-etil} follows directly from Theorem \ref{thm-action-ftil}, therefore will only prove the latter.
Similar theorems hold using sign modules and
$\Ccal{A}$.

 Before doing this, we need to establish several lemmas.  


\begin{proposition} \label{propn-Vaz} \cite{V07}
Let $m = |\nu|$. Let $M$ be a simple $R(\nu)$-module. If $M \in
\rep{\Lambda_i}$ and $\zero \neq \etilch{i}(M) \in \rep{\Lambda_{i+1}}$ then
$M = \Tii{i}{m}$. 
\end{proposition}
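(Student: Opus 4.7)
The plan is to proceed by induction on $m = |\nu|$. The base case $m = 1$ is immediate: if $M$ is a simple $R(\alpha_j)$-module in $\rep{\Lambda_i}$ then $\epch{j}(M) = 1 \leq \delta_{ji}$ forces $j = i$, so $M = \Lii{i} = \Tii{i}{1}$.

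For the inductive step, let $N = \etilch{i}(M)$; this is simple of height $m-1$ by Proposition~\ref{crystal-op-facts}.\ref{etil-is-only-special-compfactor}. The hypothesis $M \in \rep{\Lambda_i}$ together with $N \neq \zero$ forces $\epch{i}(M) = 1$, so the $\sigma$-dual of Proposition~\ref{crystal-op-facts}.\ref{e-etil-relationship} gives $\ech{i}(M) = \etilch{i}(M) = N$; since every weight of $M$ begins with $i$, $\ech{i}(M)$ coincides with $M$ as a vector space, yielding $\dim M = \dim N$ and identifying $\Char(N)$ as the character of $M$ with the leading $i$ chopped off each weight. Parallel reasoning applied to $N$ gives $\epch{i+1}(N) = 1$ and $\etilch{i+1}(N) \neq \zero$. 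If I can establish the subclaim that $\etilch{i+1}(N) \in \rep{\Lambda_{i+2}}$, the inductive hypothesis applied to $N$ (with $i$ replaced by $i+1$, $m$ replaced by $m-1$) yields $N = \Tii{i+1}{m-1}$, and the elementary identity $\etilch{i}(\Tii{i}{m}) = \Tii{i+1}{m-1}$ combined with Proposition~\ref{crystal-op-facts}.\ref{e-and-f-undo-eachother} closes the induction: $M = \ftilch{i}(N) = \Tii{i}{m}$.

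The subclaim is the main obstacle. To prove it, I would establish the stronger statement that every weight sequence in $\supp M$ equals $(i, i+1, \ldots, i+m-1)$; the subclaim then follows after chopping twice. I plan a secondary induction on the position $t$, showing that the $t$-th entry of every weight of $M$ is $i+t-1$. The cases $t = 1, 2$ are handled by the cyclotomic conditions on $M$ and $N$ above. For $t \geq 3$, pick a nonzero $v \in 1_{\und{k}} M$ with $\und{k} = (i, i+1, \ldots, i+t-2, k_t, \ldots)$ and analyze $\psi_{t-1} v$. The secondary inductive hypothesis implies $s_{t-1}(\und{k}) \notin \supp M$ unless $k_t = k_{t-1} = i+t-2$, so $\psi_{t-1} v = 0$ in all other cases; the square relation \eqref{square-relation} then excludes every $k_t$ neither adjacent to nor equal to $i+t-2$ (these would force $v = \psi_{t-1}^2 v = 0$), and the braid relation \eqref{braid-relation} applied at positions $(t-2, t-1, t)$, where the pattern $(i+t-3, i+t-2, i+t-3)$ satisfies the braid hypothesis, rules out $k_t = i+t-3$ by combining with $\psi_{t-2} v = 0 = \psi_{t-1} v$ to force $v = 0$.

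The hardest case is the repeated-letter situation $k_t = k_{t-1}$, where $s_{t-1}$ fixes $\und{k}$ and the swap argument fails; here I plan to combine a telescoping chain with the parabolic structure of $M$. Iteratively, $\psi_r v = 0$ for $r \leq t-2$ together with the square relation gives $(x_r + x_{r+1})v = 0$, which telescopes against $x_1 v = 0$ (from the cyclotomic condition on $M$) to yield $x_s v = 0$ for $s \leq t-1$. The dot-past-crossing relation \eqref{dot-past-crossing} at position $t-1$ then produces $x_t \psi_{t-1} v = -v$, and an analysis of $w = \psi_{t-1} v$ combined with the $\sigma$-dual of Proposition~\ref{crystal-op-facts}.\ref{pull-off-i's} applied twice (which decomposes $M$ under the parabolic $R(\alpha_i) \otimes R(\alpha_{i+1}) \otimes R(\nu - \alpha_i - \alpha_{i+1})$ as $\Lii{i} \boxtimes \Lii{i+1} \boxtimes N_1$ with $N_1 = \etilch{i+1}(N)$) will produce the required contradiction. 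Once the secondary induction finishes, $\Char(M)$ is supported on the single sequence $(i, i+1, \ldots, i+m-1)$ and iterating $\dim M = \dim N = \cdots = \dim \UnitModule = 1$ gives $\dim M = 1$; Proposition~\ref{class-of-triv} then identifies $M = \Tii{i}{m}$. The special case $\ell = 2$ requires minor modifications to \eqref{square-relation} and \eqref{braid-relation} but follows the same outline.
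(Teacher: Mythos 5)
Your argument goes well beyond what the paper actually records: the paper's ``proof'' of Proposition~\ref{propn-Vaz} is a one-line citation to \cite{V07}, Theorem~3.7, with a note about replacing $\etil{i}$ by $\etilch{i}$. So a self-contained proof along your lines is genuinely useful to reconstruct.

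Most of your outline is sound: the reduction via a double induction; the observation that $\epch{i}(M) = 1$ gives $\ech{i}M = \etilch{i}M = N$ and hence $\dim M = \dim N$; the $t=1,2$ base cases from the cyclotomic conditions; the exclusion of non-adjacent $k_t$ via \eqref{square-relation}; the exclusion of $k_t = i+t-3$ via \eqref{braid-relation} when $\ell>2$; and the conclusion from $\dim M = 1$ via Proposition~\ref{class-of-triv}.

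There is, however, a genuine gap at the repeated-letter case $k_t = k_{t-1}$. The parabolic decomposition you invoke --- $M \cong \Lii{i}\boxtimes\Lii{i+1}\boxtimes N_1$ with $N_1 = \etilch{i+1}N$ --- is only valid when $\epch{i+1}(N) = 1$; at $t=3$ this is precisely what you are trying to prove, so the reasoning is circular, and for $t>3$ a decomposition cutting at positions $1,2$ gives no information about the $t$-th entry. You never exhibit the contradiction, and the plan as sketched cannot produce it.

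The case does close cleanly, though, with tools already in your proof and no parabolic structure at all. Set $w = \psi_{t-1}v$. Since $s_{t-1}$ fixes $\und{k}$ here, $w \in 1_{\und{k}}M$. Moreover $\psi_r w = 0$ for $r\le t-2$: for $r\le t-3$ because $\psi_r$ commutes with $\psi_{t-1}$ and $\psi_r v = 0$; for $r = t-2$ because $s_{t-2}(\und{k})\notin\supp M$ by the secondary inductive hypothesis. Your telescoping argument, starting from $x_1 w = 0$ (cyclotomic condition on $M$), then yields $x_s w = 0$ for all $s\le t-1$. Now apply the relation $(\psi_{t-1}x_{t-1} - x_t\psi_{t-1})1_{\und{k}} = 1_{\und{k}}$ to $w$: both $x_{t-1}w$ and $\psi_{t-1}w = \psi_{t-1}^2 v$ vanish (the latter because $k_{t-1}=k_t$ makes $\psi_{t-1}^2 1_{\und{k}}=0$), so $w = 0$. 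But you had already shown $x_t w = -v$, so $v = 0$, the required contradiction. This argument is uniform in $\ell$, including $\ell=2$, where (as you note) the braid-relation step must be bypassed since there $i+t-3\equiv i+t-1$ is the \emph{desired} entry rather than one to be excluded.
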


\begin{proof}
This can be directly adapted from Theorem 3.7 of \cite{V07},
replacing $\etil{i}$ with $\etilch{i}$ and noting $\epch{j}(M) =
\delta_{i,j}$, $\epch{j}(\etilch{i}M) = \delta_{i+1,j}$ (assuming 
$m \ge 2$). It was
proved in the context of $B(\Lambda_i)$, hence holds for
$\rep{\Lambda_i}$ by \cite{LV11}.  \end{proof}

\begin{proposition} \label{propn-level-two}
Let $A$, $\Rcalj{t}{A}$ be as in 
\eqref{def-Rt}
 and $m = |\nu|$. If there exists $1 \leq t < r(A)$ with $\Rcalj{t}{A} \in \rep{\Lambda_{i+t}}$, then in fact $\Rcalj{j}{A} \in \rep{\Lambda_{i+j}}$ for all $1 \leq j \leq r(A)$, $r(A) = \min\{\ell-1,m\}$, and $A = \Tii{i}{m}$.
\end{proposition}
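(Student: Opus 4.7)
The plan is to prove directly that $A = \Tii{i}{m}$, after which both $\Rcalj{j}{\Tii{i}{m}} = \Tii{i+j}{m-j} \in \rep{\Lambda_{i+j}}$ and $r(\Tii{i}{m}) = \min\{\ell-1,m\}$ follow from a routine inspection of the one-dimensional characters of $\Tii{i+j}{m-j}$. The route I take to $A = \Tii{i}{m}$ is to first establish that the full cascade terminates: $\Rcalj{m}{A} = \UnitModule$.

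To produce the cascade, I apply Theorem \ref{exist-theorem} to $M := \Rcalj{t}{A}$, which by hypothesis lies in $\rep{\Lambda_{i+t}}$. This yields $r' := r(M) \geq 1$, a simple module $\Rcalj{r'}{M} \in \rep{\Lambda_{i+t-1}}$, and a surjection $\pr{i+t} \ind \Tii{i+t}{r'} \boxtimes \Rcalj{r'}{M} \twoheadrightarrow M$. Composition of $\etilch{\cdot}$ operators in sequence identifies $\Rcalj{r'}{M}$ with $\Rcalj{t+r'}{A}$. I splice this with the existing surjection $\pr{i}\ind \Tii{i}{t} \boxtimes M \twoheadrightarrow A$ by transitivity of induction, then use Proposition \ref{pr-facts}.\ref{pr-fact-4} together with iterated Proposition \ref{triv-pr-Lemma}.\ref{triv-pr-Lemma-2} to merge the two consecutive trivial modules into $\Tii{i}{t+r'}$. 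The outcome is the combined surjection
\begin{equation*}
\pr{i}\ind \Tii{i}{t+r'} \boxtimes \Rcalj{t+r'}{A} \twoheadrightarrow A.
\end{equation*}

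The argument used in the proof of Theorem \ref{exist-theorem} (which relies only on Proposition \ref{triv-pr-Lemma} and Proposition \ref{pr-facts}.\ref{vanishing-pr-prop}, not on any minimality of $r(A)$) now applies verbatim to this new surjection and forces $\Rcalj{t+r'}{A} \in \rep{\Lambda_{i+t+r'} + \Lambda_{i-1}}$. Intersecting with $\Rcalj{t+r'}{A} \in \rep{\Lambda_{i+t-1}}$ from the first step, every $\epch{j}(\Rcalj{t+r'}{A})$ must vanish unless $j$ lies in both $\{i+t-1\}$ and $\{i+t+r',\ i-1\}$. Generically (when neither $r' \equiv -1 \pmod{\ell}$ nor $t \equiv 0 \pmod{\ell}$) this intersection is empty, forcing $\Rcalj{t+r'}{A} = \UnitModule$, hence $r' = m-t$ and $\Rcalj{m}{A} = \UnitModule$. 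The remaining edge cases I handle by iterating: applying the same Theorem \ref{exist-theorem} reduction to $\Rcalj{t+r'}{A}$ strictly decreases height, so the process must terminate at $\UnitModule$ after finitely many steps.

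Once $\Rcalj{m}{A} = \UnitModule$ is in hand, I reconstruct $A$ by back-recursion using Proposition \ref{crystal-op-facts}.\ref{e-and-f-undo-eachother} in its $\sigma$-twisted form: setting $A_m := \UnitModule$ and $A_{m-k} := \ftilch{i+m-k}(A_{m-k+1})$ produces at each stage the unique simple $R$-module whose image under $\etilch{i+m-k}$ equals $A_{m-k+1}$. A direct Frobenius reciprocity computation on characters gives $\ftilch{i+m-k}\Tii{i+m-k+1}{k-1} \cong \Tii{i+m-k}{k}$, so $A_{m-k} = \Tii{i+m-k}{k}$ for every $k$, and in particular $A = A_0 = \Tii{i}{m}$. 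The most delicate step will be the edge-case bookkeeping in the intersection calculation: one must verify carefully that iterating the Theorem \ref{exist-theorem} reduction really terminates at the unit module rather than stabilizing at some larger trivial module sitting in a shifted level-one category.
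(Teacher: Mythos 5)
Your overall strategy is genuinely different from the paper's: you drive the cascade \emph{forward} all the way to $\UnitModule$ and then reconstruct $A$ by back-applying $\ftilch{}$, whereas the paper pushes the constraint \emph{backward} to $\Rcalj{1}{A} \in \rep{\Lambda_{i+1}}$ and then appeals directly to Proposition \ref{propn-Vaz}. Unfortunately, the forward approach has a gap at the merging step. Replacing $\ind \Tii{i}{t} \boxtimes \Tii{i+t}{r'} \boxtimes \Rcalj{t+r'}{A}$ with $\ind \Tii{i}{t+r'} \boxtimes \Rcalj{t+r'}{A}$ after applying $\pr{i}$ requires $\pr{i}\bigl(\ind \Tii{i}{t} \boxtimes \Tii{i+t}{r'}\bigr) \cong \Tii{i}{t+r'}$, but Proposition \ref{triv-pr-Lemma}.\ref{triv-pr-Lemma-2} applies only when the running height $t+s$ avoids $-1 \bmod \ell$ for every $s = 0, \ldots, r'-1$, i.e.\ when $t+r' \le \ell-1$. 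A priori you only know $t+r' \le m$, so the merged module can have extra composition factors (for $\ell = 3$, already $\pr{0}\ind \T{0,1}\boxtimes \Lii{2}$ has two composition factors by Proposition \ref{triv-pr-Lemma}.\ref{k-eq-(-1)-part1}), and the constraint $\Rcalj{t+r'}{A} \in \rep{\Lambda_{i+t+r'}+\Lambda_{i-1}}$ that you extract from the merged surjection is not actually available.

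The edge-case hand-off is also not secure. When $t \equiv 0 \bmod \ell$, your intersection argument only produces $\Rcalj{t+r'}{A} \in \rep{\Lambda_{i-1}}$, i.e.\ $r(A) = t+r'$ --- a legitimate termination of the cascade well short of $\UnitModule$ --- and ``iterating'' does not push further because the color of the next $\etilch{}$ in the cascade from $A$ is $i+t+r'$, not the $i+t-1$ that Theorem~\ref{exist-theorem} applied to $\Rcalj{t+r'}{A}$ would start from. The paper avoids both difficulties. Since $t < r(A)$ automatically forces $t \not\equiv -1 \bmod \ell$, the hypothesis yields $\epch{i-1}(\Rcalj{t}{A}) = 0$, and the Shuffle Lemma applied to the existing surjection $\ind \Tii{i}{t}\boxtimes\Rcalj{t}{A} \twoheadrightarrow A$ then rules out any $[\und{i}] = [i,\,i-1,\dots]$ in $\supp{A}$, hence $\epch{i-1}(\Rcalj{1}{A}) = 0$, so $\Rcalj{1}{A} \in \rep{\Lambda_{i+1}}$ and $A = \Tii{i}{m}$ by Proposition~\ref{propn-Vaz}. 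Your back-recursion from $\Rcalj{m}{A} = \UnitModule$ and the final verification for trivial modules are correct; what is missing is a Shuffle-Lemma-type control of supports at color $i-1$ to actually get the cascade down to $\UnitModule$.
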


\begin{proof}
As usual, we set $i=0$ for ease of exposition. We have already shown $\Rcalj{t}{A} \in \rep{\Lambda_t + \Lambda_{-1}}$. Given $A = \Rcalj{0}{A} \in \rep{\Lambda_0}$, suppose $\ech{0}A = \etilch{0}A = \Rcalj{1}{A} \in \rep{\Lambda_1}$ then by Proposition \ref{propn-Vaz}, $A = \Tii{0}{m}$ and we are done. Further in the case $\ell = 2$ this means $r(A) = 1$. If $\ell > 2$ then $r(A) \leq \ell-1$. Assume otherwise. 

{\emph{Case 1}}: $\ell \neq 2$. Then $\epch{-1}(\Rcalj{1}{A}) = 1$. This means there is $[\und{i}] = [i_1,i_2,\dots,i_m] \in \supp{A}$ with $i_1 = 0$, $i_2 = -1$. Recall we have 
\begin{equation}
\ind \Tii{0}{t-1}\boxtimes \Rcalj{t}{A} \twoheadrightarrow A.
\end{equation}
By the Shuffle Lemma, the only way to have $[\und{i}] \in \supp{A}$ is if $\epch{-1}(\Rcalj{t}{A}) \geq 1$.
{\emph{Case 2}}: $\ell = 2$. Then $\epch{-1}(\Rcalj{1}{A}) = 2$, as we assumed $r(A) > 1$. Then there is $[\und{i}] \in \supp{A}$ with $i_1 = 0,$ $i_2 = -1$, $i_3 = -1$. Again, by the Shuffle Lemma, this is only possible if $\epch{-1}(\Rcalj{t}{A}) \geq 1$. 

Furthermore, when $t \equiv -1 \bmod \ell$ for $0 < t < r(A)$ we have $\epch{-1}(\Rcalj{t}{A}) = 2$ by the minimality of $r(A)$.
\end{proof}

\begin{lemma} \label{lemma-Rjump}
Let $A$ be a simple $R^{\Lambda_0}(\nu)$-module with $k = r(A)$, $\UnitModule \neq \Rcal{A} \in \rep{\Lambda_{-1}}$. Fix $j \in I$. Let $J = \jump{j}(\Rcal{A})$. If $k \neq j+1$ then for
$t \in \N$,
$0 \leq t \leq k$,
\begin{equation}
\jump{j}(\Rcalj{t}{A}) = \begin{cases}
J & t \not\equiv j+1 \bmod \ell \\
J+1 & t \equiv j+1 \bmod \ell.
\end{cases}
\end{equation}
If $k = j+1$ and $J \neq 0$, then for $0 \leq t \leq k$,
\begin{equation}
\jump{j}(\Rcalj{t}{A}) = \begin{cases}
J-1 & t \not\equiv j+1 \bmod \ell \\
J & t \equiv j+1 \bmod \ell.
\end{cases}
\end{equation}
If $k = j+1$ and $J = 0$, then for $0 < t < k$
\begin{equation}
\jump{j}(\Rcalj{t}{A}) = \begin{cases}
0 & t \not\equiv j+1 \bmod \ell \\
1 & t \equiv j+1 \bmod \ell.
\end{cases}
\end{equation}
and $\jump{j}(A) = 0$.
\end{lemma}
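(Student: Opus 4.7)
The plan is to apply the jump formula \eqref{eqn-jump},
$\jump{j}(M) = \wti{j}(M) + \ep{j}(M) + \epch{j}(M)$,
to each $\Rcalj{t}{A}$ and track the three summands separately as $t$ decreases from $k$ to $0$, using $\Rcalj{t}{A} = \ftilch{t}\Rcalj{t+1}{A}$ (from Proposition \ref{crystal-op-facts}.\ref{e-and-f-undo-eachother}, valid by minimality of $r(A)$). The weight piece is immediate from the Cartan matrix: $\wti{j}(\Rcalj{t}{A}) - \wti{j}(\Rcal{A}) = -\sum_{s=t}^{k-1} a_{j,\, s \bmod \ell}$.

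For the $\epch{j}$ piece, I invoke the containments established inside the proof of Theorem \ref{exist-theorem}: for $0 < t < k$ we have $\Rcalj{t}{A} \in \rep{\Lambda_{t \bmod \ell} + \Lambda_{-1}}$ (or $\rep{2\Lambda_{-1}}$ when $t \equiv -1 \bmod \ell$), and by minimality of $r(A)$ the module lies outside $\rep{\Lambda_{-1}}$. Proposition \ref{cyclotomic-char} then pins down $\epch{j}(\Rcalj{t}{A})$ exactly as $1$ when $j \equiv t \not\equiv -1$, as $2$ when $j \equiv t \equiv -1$, and as $0$ otherwise, with the residual ambiguity at $j \equiv -1$ and $t \not\equiv -1$ resolved by using $\Rcal{A} \neq \UnitModule$ (Proposition \ref{class-of-triv} forces $\epch{-1}(\Rcal{A}) = 1$) and propagating the constraint via Proposition \ref{triv-pr-Lemma}. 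For the $\ep{j}$ piece, Lemma \ref{commuting-functors}.\ref{f-and-e} combined with the Shuffle Lemma upper bound yields $\ep{j}(\ftilch{s}M) = \ep{j}(M)$ for all $s \not\equiv j \bmod \ell$; at the single step $s \equiv j$ (if present), combining the jump formula with \eqref{ftil-and-jump-ob} gives $\ep{j}(\ftilch{j}N) = \ep{j}(N)$ when $\jump{j}(N) \geq 1$ and $\ep{j}(\ftilch{j}N) = \ep{j}(N) + 1$ when $\jump{j}(N) = 0$.

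Assembling the three ingredients, Case 1 goes through with $\ep{j}$ constant throughout (the $\ftilch{j}$ step, if present, is traversed while $\jump{j}$ is strictly positive), and the single bump at $t \equiv j+1$ records the unbalanced Cartan contribution not offset by $\epch{j}$. Case 2 is the parallel statement with the initial backward step $\ftilch{j}$ acting on $\Rcal{A}$ of jump $J \geq 1$. Case 3 is the delicate one: the initial $\ftilch{j}$-step acts on a module with $\jump{j} = 0$, so $\ep{j}$ bumps up by $1$ there, producing the claimed oscillation of $\jump{j}$ between $0$ and $1$, with the terminal value $\jump{j}(A) = 0$ enforced by the $\epch{j}$ constraint from $A \in \rep{\Lambda_0}$ (together with $A = \ftilch{0}\Rcal{A}$ in the $j = 0$ sub-case, via \eqref{ftil-and-jump-ob} directly). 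The main obstacle will be the careful bookkeeping in the $j \equiv -1 \bmod \ell$ boundary case, where $\epch{-1}(\Rcalj{t}{A})$ is not pinned down solely by the ambient cyclotomic category and the hypothesis $\Rcal{A} \neq \UnitModule$ has to be fed in at the endpoint $t = k$ and then propagated backward through the chain before the jump formula assembles as claimed.
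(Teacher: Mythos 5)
Your plan---apply the jump formula \eqref{eqn-jump} and track the three summands $\wti{j}$, $\ep{j}$, $\epch{j}$ separately down the chain $\Rcalj{k}{A}, \Rcalj{k-1}{A}, \ldots, \Rcalj{0}{A}=A$---is the same strategy as the paper's, and your treatment of the weight piece and of $\ep{j}$ is sound (the latter via Remark \ref{ep-and-ftil-for-i-neq-j} for $s\not\equiv j$, and the jump-zero dichotomy at the single $\ftilch{j}$-step, is exactly what the paper uses).

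The gap is in the $\epch{j}$ piece, in the $j\equiv -1\bmod\ell$, $t\not\equiv -1$ case that you yourself flag as the "main obstacle." You propose to resolve it by observing $\epch{-1}(\Rcal{A})=1$ (true when $\Rcal{A}\neq\UnitModule$, though this is not a consequence of Proposition \ref{class-of-triv}---it follows because $\Rcal{A}\in\rep{\Lambda_{-1}}$ is a nonzero simple module over some $R(\mu)$ with $|\mu|\geq 1$) and then "propagating the constraint." But $\epch{j}$ is \emph{not} preserved under $\ftilch{s}$ for $s\neq j$: for instance $\epch{0}(\ftilch{1}\Lii{0})=\epch{0}(\Si{1,0})=0\neq 1 = \epch{0}(\Lii{0})$. (It is $\ep{j}$, not $\epch{j}$, that commutes past $\ftilch{s}$, which is exactly why the $\ep{j}$ piece works.) And the cyclotomic containment $\Rcalj{t}{A}\in\rep{\Lambda_t+\Lambda_{-1}}$ from Theorem \ref{exist-theorem} only gives the upper bound $\epch{-1}(\Rcalj{t}{A})\leq 1$; minimality of $r(A)$ gives $\Rcalj{t}{A}\notin\rep{\Lambda_{-1}}$, which does not force $\epch{-1}=1$ when $t\not\equiv -1$. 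The value can genuinely be $0$: take $\ell=3$, $A=\T{0,1,2,0}$, so $k=r(A)=2$ and $\Rcal{A}=\T{2,0}\neq\UnitModule$. Then $\Rcalj{1}{A}=\T{1,2,0}$ has $\epch{2}(\Rcalj{1}{A})=0$, not $1$, and your assembly would output $\jump{2}(\Rcalj{1}{A})=1$ instead of the correct $0$. Proposition \ref{triv-pr-Lemma} also has nothing to say about tracking $\epch{j}$ along this chain.

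The paper circumvents this by first dispatching $A=\Tii{i}{m}$ (the source of the counterexample) directly from the closed formula \eqref{jump-formula-triv}, and then, for $A$ non-trivial, invoking Proposition \ref{propn-level-two} (which rests on \cite{V07}) to show $\Rcalj{t}{A}\notin\rep{\Lambda_t}$ for $0<t<r(A)$. That, sandwiched with $\Rcalj{t}{A}\in\rep{\Lambda_t+\Lambda_{-1}}$, does force $\epch{-1}(\Rcalj{t}{A})=1$ in the non-trivial case. Your proposal is missing both the separate trivial base case and the $\rep{\Lambda_t}$-exclusion lower bound; once those are supplied the bookkeeping assembles as you outline.
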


\begin{proof}
We will first prove the lemma in the case $A = \Tii{0}{m}$, where $m = |\nu|$. Then $k \leq \ell-1$. For $0 \leq t \leq k$ we have $\Rcalj{t}{A} = \T{t, t+1, \dots, m-1}$. From \eqref{jump-formula-triv}, $\jump{j} (\Rcalj{t}{A}) = \delta_{j,t-1} + \delta_{j,m}$ (recalling none of these modules are $\UnitModule$ by hypothesis). One can easily check the Lemma holds.

From now on, we assume $A$ is not a trivial module.

We now continue with the third case.
Suppose $k = j+1$ and $J = 0$. Then $\Rcal{A} = \Rcalj{k}{A} \in
\rep{\Lambda_{-1}}$, so $\epch{j}(\Rcal{A}) = \delta_{j,-1}$.
$\Rcalj{k-1}{A} = \ftilch{j}\Rcal{A} = \ftil{j}\Rcal{A}$
so $\epch{j}(\Rcalj{k-1}{A}) = \epch{j}(\Rcal{A}) + 1$
and $\ep{j}(\Rcalj{k-1}{A}) = \ep{j}(\Rcal{A}) + 1$.
In particular $\Rcalj{t}{A} \in
\rep{\Lambda_{t} + \Lambda_{-1}}$ but we may assume $\Rcalj{t}{A}
\notin \rep{\Lambda_{t}}$ or else by \cite{V07} this would force
$\Rcal{A}$ and $A$ itself to be trivial. Further
$\wti{j}(\Rcalj{k-1}{A}) = \wti{j}(\ftilch{j}\Rcal{A}) =
\wti{j}(\Rcal{A}) - 2$. Hence
\begin{align}
\jump{j} (\Rcalj{k-1}{A}) & = \epch{j}(\Rcalj{k-1}{A}) + \ep{j}(\Rcalj{k-1}{A}) + \wti{j}(\Rcalj{k-1}{A}) \\
\notag
& = \epch{j}(\Rcal{A}) + 1 + \ep{j}(\Rcal{A})+1 + \wti{j}(\Rcal{A})-2\\
\notag
& = 0
\end{align}
For $\Rcalj{k-2}{A},$
 \begin{equation}
\epch{j}(\Rcalj{k-2}{A}) = \delta_{j,k-2} + \delta_{j,-1} = \delta_{j,-1} = \epch{j}(\Rcal{A}). 
\end{equation}
Also 
\begin{equation}
\ep{j}(\Rcalj{k-2}{A}) = \ep{j}(\ftilch{j-1} \Rcalj{k-1}{A}) = \ep{j}(\Rcalj{k-1}{A})
\end{equation}
by Remark \ref{ep-and-ftil-for-i-neq-j}. 

If $k -2 \neq j+1$ (i.e. $\ell \neq 2$) then
\begin{align}
\jump{j}(\Rcalj{k-2}{A}) &= \epch{j}(\Rcal{A}) + \ep{j}(\Rcal{A}) + 1 + \wti{j}(\Rcal{A}) - 2 + 1 \\ & = 0.
\notag
\end{align}
Since $\ep{j}(\ftilch{i}B) = \ep{j}(B)$, $\epch{j}(\ftilch{i}B) = \epch{j}(B)$, and $\wti{j}(\ftilch{i}B) = \wti{j}(B)$ when $i \notin \{j-1,j,j+1\}$, similar computations show $\jump{j}(\Rcalj{t}{A}) = 0$ for $k-\ell < t \leq k$.

If $k - \ell > 0$, we check
\begin{align}
\jump{j}(\Rcalj{k-\ell}{A}) &= \delta_{j,j+1} + \delta_{j,-1} + \ep{j}(\ftilch{j+1}\Rcalj{k-\ell+1}{A}) + \wti{j}(\ftilch{j+1} \Rcalj{k-\ell+1}{A})\\
\notag
& = \jump{j}(\Rcalj{k-\ell+1}{A}) + 1 = 1.
\end{align}
(Note that when $\ell = 2$, $\epch{j}(\Rcalj{k-\ell}{A}) = \epch{j}(\Rcalj{k-\ell+1}{A}) -1$ but $\wti{j}(\ftilch{j+1}\Rcalj{k-\ell+1}{A}) = \wti{j}(\Rcalj{k-\ell+1}{A}) + 2$ so the equality still holds.)

Next $\jump{j}(\Rcalj{k-\ell-1}{A}) = \jump{j}(\ftilch{j} \Rcalj{k-\ell}{A}) = 1 - 1 = 0.$ Now all other inductive computations for $\jump{j}(\Rcalj{t}{A})$ are identical to the above computations, down to $t= 0$, for which $\Rcalj{0}{A} = A$. Here if $0 = t \equiv j+1  \bmod \ell$ then because $\epch{j}(A) = 0\neq \delta_{j,-1} + \delta_{j,0}$ we instead get $\jump{j}(A) = 0$.

The second case, $k = j+1$ but $J > 0$, is also very similar to the above. The only difference is that $\jump{j}(\Rcal{A}) \neq 0$ hence $\ep{j}(\Rcalj{k-1}{A}) = \ep{j}(\ftilch{j}\Rcal{A}) = \ep{j}(\Rcal{A})$. Regardless $\jump{j}(\Rcalj{k-1}{A}) = \jump{j}(\ftilch{j}\Rcal{A}) = J - 1$. We check 
\begin{align*}
\jump{j}(\Rcalj{k-2}{A}) & = \delta_{j,k-2} + \delta_{j,-1} +
\ep{j}(\ftilch{j-1}\Rcalj{k-1}{A}) + \wti{j}(\ftilch{j-1}\ftilch{j}
\Rcal{A}) \\
& = 0 + \epch{j}(\Rcal{A}) + \ep{j}(\Rcal{A}) +
\wti{j}(\Rcal{A}) - 2 - \langle h_j,\alpha_{j-1}\rangle \\
& = \begin{cases}
J -1 & \text{if} \;\; \ell \neq 2 \\
J & \text{if} \;\; \ell = 2.
\end{cases}
\end{align*}

Note in the case $\ell = 2$ that $k-2 \equiv j+1$, so this is consistent with the statement of the lemma.

The rest of the proof is identical to that in Case 1.

Finally we consider $k \not\equiv j+1 \bmod \ell$.
Letting $j_0 \in \MB{Z}$, $j_0 < k$ be maximal such that $j_0 \equiv j
\bmod \ell$, it is clear that $\jump{j}(\Rcalj{t}{A}) =
\jump{j}(\Rcal{A})$ for all $t > j_0 + 1$. 

Then 
\begin{align}
\jump{j} (\Rcalj{j_0 + 1}{A}) & = \delta_{j,j_0 +1} + \delta_{j,-1} + \ep{j}(\ftilch{j+1}\Rcalj{j_0+2}{A}) + \wti{j}(\ftilch{j+1}\Rcalj{j_0+2}{A})\\
& = 0 + \delta_{j,-1} + \ep{j}(\Rcalj{j_0+2}{A}) + \wti{j}(\Rcalj{j_0+2}{A}) - \langle h_j, \alpha_{j+1}\rangle \\
& = \jump{j}(\Rcalj{j_0 + 2}{A}) + 1 = J+1.
\end{align}
In the case $\ell \neq 2$ this follows as $\delta_{j,-1} = \epch{j}(\Rcalj{j_0 + 2}{A})$ and $\langle h_j, \alpha_{j+1} \rangle = -1$. In the case $\ell =2$, we have $\epch{j}(\Rcalj{j_0+2}{A}) = 1 + \delta_{j,-1}$ but $\langle h_j, \alpha_{i+1} \rangle = -2$. 

Next $\jump{j}(\Rcalj{j_0}{A}) = \jump{j}(\ftilch{j}\Rcalj{j_0+1}{A}) = \jump{j}(\Rcalj{j_0+1}{A})-1 = J$.

Also for $\ell \neq 2$, 
\begin{align}
\jump{j}(\Rcalj{j_0-1}{A}) &= \delta_{j,j-1} + \delta_{j,-1} + \ep{j}(\ftilch{j-1}\Rcalj{j_0}{A}) + \wti{j}(\ftilch{j-1}\Rcalj{j_0}{A})\\
& = \jump{j}(\Rcalj{j_0}{A}) = J
\end{align}
as $\epch{j}(\Rcalj{j_0}{A}) = 1 + \delta_{j,-1}$ and $\langle h_j,\alpha_{j-1} \rangle = -1$. We don't consider $\ell = 2$ as $j_0 -1 \equiv j_0 + 1$ and we have already considered that case. We note that the calculations of $\jump{j}(\Rcalj{t}{A})$ only depend on $t \bmod \ell$ and so we are done.
\end{proof}

\begin{proof} [Proof of Theorem \ref{thm-action-ftil}]
For ease of exposition we set $i = 0$ in the proof. In fact we will prove a slightly stronger statement, that when $\ep{j}(\Tii{0}{r(A)}) < \phcyc{-1}{j}(\Rcal{A})$ and $\pr{0} \ftil{j}A \neq \zero$ then $r(\ftil{j}A) = r(A)$ and for $0 < t \leq r(A)$, $\Rcalj{t}{\ftil{j}A} = \ftil{j}\Rcalj{t}{A}$. 

First note that in the case $\Rcal{A} = \UnitModule$ the theorem is obvious as $\ftil{j}\Rcal{A} = \Lii{j}$ and $\ftil{j} \Tii{0}{k} = \cosoc \ind \Tii{0}{k} \boxtimes \Lii{j}$. So from now on assume $\Rcal{A} \neq \UnitModule$.

{\emph{Case 1}}: Suppose $\ep{j}(\Tii{0}{k}) = 0$. In particular, $j \neq k-1$.

$\bullet$
{\emph{Case 1a}}: Suppose $\jump{j}(\Rcal{A}) = 0$. By Lemma \ref{lemma-Rjump}, 
\begin{equation}
\jump{j}(A) = \begin{cases}
0 & 0 \not\equiv j+1 \bmod \ell, \\
1 & 0 \equiv j+ 1 
\bmod \ell.
\end{cases}
\end{equation}
So when $j \neq -1$ as $\jump{j}(A) = 0$, $\pr{0}\ftil{j}A = \zero$ and so we need not consider this case. Hence we may assume $j = -1$. Further, as $\jump{-1}(\Rcal{A}) = 0$ and $\epch{-1}(\Rcal{A}) = 1$ we have
\begin{align}
\ind \Tii{0}{k} \boxtimes \Lii{-1} \boxtimes \Rcal{A} & \xrightarrow{\cong} \ind \Tii{0}{k} \boxtimes \Rcal{A} \boxtimes \Lii{-1} \\ 
& \twoheadrightarrow \ind A \boxtimes \Lii{-1} \twoheadrightarrow \ftil{-1}A
\end{align}
This implies $\pr{0} \ind \Tii{0}{k} \boxtimes \Lii{-1} \boxtimes \Lii{-1} \neq \zero$, forcing $k \equiv -1  \bmod \ell$ by Propositions \ref{interp-of-phcyc} and \ref{pr-facts}. But then we have
\begin{equation}
\ind \T{0, \dots, -2} \boxtimes \Lii{-1} \boxtimes \Rcal{A} \twoheadrightarrow \ftil{-1}A.
\end{equation}
Because 
\begin{equation}
\ind \T{0,\dots, -2} \boxtimes \Rcal{A} \twoheadrightarrow A
\end{equation}
we see $\ep{-1}(\Rcal{A}) = \ep{-1}(A) = \ep{-1}(\ftil{-1}A) - 1$. If we had 
\begin{equation}
\ind N \boxtimes \Rcal{A} \twoheadrightarrow \ftil{-1}A
\end{equation}
for any composition factor $N$ of $\ind \Tii{0}{k} \boxtimes \Lii{-1}$ other than $\ftil{-1}\Tii{0}{k}$, the Shuffle Lemma would yield $\ep{-1}(\ftil{-1}A) = \ep{-1}(\Rcal{A})$, a contradiction. Hence we have 
\begin{equation}
\ind \Tii{0}{k+1} \boxtimes \Rcal{A} = \ind \ftil{-1}\Tii{0}{k} \boxtimes \Rcal{A} \twoheadrightarrow \ftil{-1}A.
\end{equation}

$\bullet$
{\emph{Case 1b}}: Suppose that $\jump{j}(\Rcal{A}) = J > 0$. Again by Lemma \ref{lemma-Rjump} 
\begin{equation}
\jump{j}(A) = \begin{cases}
J & 0 \not\equiv j+1  \bmod \ell,\\
J+1 & 0 \equiv j+1 \not\equiv k  \bmod \ell.
\end{cases}
\end{equation}
Note $\ftil{j}\Rcal{A} \in \rep{\Lambda_{-1}}$ as $\jump{j}(\Rcal{A}) > 0$. 

We compute 
\begin{equation}
\ftil{j}\Rcalj{k-1}{A} = \ftil{j}\ftilch{k-1}\Rcal{A} = \ftilch{k-1}\ftil{j}\Rcal{A}
\end{equation}
as $j \neq k-1$ as in Case 1. Also, clearly $\ftilch{k-1}\ftil{j}\Rcal{A} \in \rep{\Lambda_{k-1} + \Lambda_{-1}}$ and $\ftilch{k-1}\ftil{j}\Rcal{A} \notin \rep{\Lambda_{-1}}$. Assume we have shown 
\begin{equation}
\ftil{j}\Rcalj{t}{A} = \ftilch{t} \dots \ftilch{k-1} \ftil{j} \Rcal{A} \in \rep{\Lambda_t + \Lambda_{-1}} \setminus \rep{\Lambda_{-1}}.
\end{equation}
Then we compute $\ftil{j}\Rcalj{t-1}{A} = \ftil{j}\ftilch{t-1}\Rcalj{t}{A}$. If $t - 1 \not\equiv j \bmod \ell$ this is equal to $\ftilch{t-1}\ftil{j}\Rcalj{t}{A} = \ftilch{t-1}\dots \ftilch{k-1}\ftil{j}\Rcal{A}$ by the inductive hypothesis. If instead $t \equiv j+1 \bmod \ell$ then 
\begin{equation}
\jump{j}(\Rcalj{t}{A}) = J+1> 1.
\end{equation}
So by Lemma \ref{commuting-functors}.\ref{commuting-f-same-i} 
\begin{align}
 \ftil{j}\Rcalj{t-1}{A} &= \ftil{j}\ftilch{j} \Rcalj{t}{A} = \ftilch{j} \ftil{j} \Rcalj{t}{A} = \ftilch{j}\ftilch{t} \dots \ftilch{k-1}\ftil{j}\Rcal{A} \\ &= \ftilch{t-1} \dots \ftilch{k-1} \ftil{j} \Rcal{A}.
\end{align}
By downwards induction $\ftil{j}A = \ftil{j}\Rcalj{0}{A} = \ftilch{0} \dots \ftilch{k-1} \ftil{j} \Rcal{A}$. Certainly $\ftil{j} \Rcalj{t-1}{A} \in \rep{\Lambda_{t-1} + \Lambda_{-1}} \setminus \rep{\Lambda_{-1}}$ as $\jump{j}(\Rcalj{t-1}{A}) > 0$ and $\Rcalj{t-1}{A} \in \rep{\Lambda_{t-1} + \Lambda_{-1}} \setminus \rep{\Lambda_{-1}}$ when $t-1 > 0$. And we already know $\ftil{j}A \in \rep{\Lambda_0}$. This completes the induction and furthermore shows
\begin{equation}
r(\ftil{j}A) = r(A), \quad\quad \Rcal{\ftil{j}A} = \ftil{j}\Rcal{A}
\end{equation}
as well as the stronger statement $\Rcalj{t}{\ftil{j}A} = \ftil{j}\Rcalj{t}{A}$ for $0 \leq t \leq k$. In other words we have
\begin{equation}
\ind \Tii{0}{k} \boxtimes \ftil{j}\Rcal{A} \rightarrow \ftil{j}{A}.
\end{equation}

{\emph{Case 2}}: Suppose that $\ep{j}(\Tii{0}{k}) = 1$. This is the
only other possibility as $\ep{j}(\Tii{0}{k}) \leq 1$ for all
$j \in I$. Note $k \equiv j+1 \bmod \ell$.

$\bullet$
{\emph{Case 2a}}: If $\phcyc{-1}{j}(\Rcal{A}) = 0$ then $\jump{j}(A) = 0$ by Lemma \ref{lemma-Rjump} so $\pr{0}\ftil{j}A = 0$ and we need not consider this case. 

$\bullet$
{\emph{Case 2b}}: If $\phcyc{-1}{j}(\Rcal{A}) = 1$ then again by Lemma \ref{lemma-Rjump} as $\Rcal{A} = \Rcalj{k}{A}$ and $k \equiv j+1 \bmod \ell$

\begin{equation}
\jump{j}(A) = \jump{j}(\Rcalj{0}{A}) = \begin{cases}
0 & 0 \not\equiv j+1 \bmod \ell \\
1 & 0 \equiv j+1 \bmod \ell.
\end{cases}
\end{equation}

So we need not consider this case unless $j = -1$. However we will
show, this case cannot arise
as we assumed $\Rcal{A} \neq
\UnitModule$. Note $\jump{j}(\Rcalj{k-1}{A}) =
\jump{-1}(\ftilch{-1}\Rcal{A}) = 0$ and $\Rcalj{k-1}{A} \in
\rep{2\Lambda_{-1}}$. Thus we have
\begin{align}
\ind \Tii{0}{k-1} \boxtimes \Lii{-1} \boxtimes \Rcalj{k-1}{A} & \xrightarrow{\cong} \ind \Tii{0}{k-1} \boxtimes \Rcalj{k-1}{A} \boxtimes \Lii{-1} \\ 
&\twoheadrightarrow \ind A \boxtimes \Lii{-1} \twoheadrightarrow \ftil{-1}A.
\end{align}
However $\pr{0} \ind \T{0, \dots, k-1} \boxtimes \Lii{-1} \boxtimes \Lii{-1} \boxtimes \Lii{-1} = \zero$ by Proposition \ref{triv-pr-Lemma}.\ref{k-eq-(-1)-part1}, which is a contradiction
to $\pr{0} \ftil{-1}A \neq \zero$.

$\bullet$
{\emph{Case 2c}}: When $\phcyc{-1}{j}(\Rcal{A}) = J > 1$ the argument is similar to Case $1b$.

\end{proof}

\subsection{Other types}
We can repeat the arguments above to prove similar
theorems in affine
type $B, C,$ and $D$, 
using the Kirillov-Reshetikhin crystal $B^{1,1}$ of appropriate type,
and  the modules corresponding to the
nodes studied in  \cite{V07} in place of $\Tii{i}{k}$.
This is work in progress \cite{KV}.


\bibliographystyle{amsplain}

\def\cprime{$'$} \def\cprime{$'$}
\providecommand{\bysame}{\leavevmode\hbox to3em{\hrulefill}\thinspace}
\providecommand{\MR}{\relax\ifhmode\unskip\space\fi MR }
\providecommand{\MRhref}[2]{%
  \href{http://www.ams.org/mathscinet-getitem?mr=#1}{#2}
}
\providecommand{\href}[2]{#2}

\end{document}